\documentclass[11pt,oneside,letterpaper]{amsart}

\usepackage[margin=1in]{geometry}
\usepackage{amsmath,amssymb,amsthm}
\usepackage{mathrsfs}
\usepackage{xcolor}
\usepackage{hyperref}
\usepackage{enumitem}

\hypersetup{
  colorlinks=true,
  linkcolor=blue,
  citecolor=red,
  urlcolor=blue
}

\newtheorem{theorem}{Theorem}[section]
\newtheorem{lemma}[theorem]{Lemma}
\newtheorem{corollary}[theorem]{Corollary}
\newtheorem{proposition}[theorem]{Proposition}
\newtheoremstyle{uprightremark}%
  {3pt}{3pt}
  {\normalfont}{}
  {\bfseries}{.}{.5em}{}
\theoremstyle{uprightremark}
\newtheorem{remark}[theorem]{Remark}
\theoremstyle{plain}

\newcommand{\cG}{\mathcal{G}}
\newcommand{\cH}{\mathcal{H}}
\newcommand{\cF}{\mathcal{F}}
\newcommand{\mult}{\operatorname{mult}}
\newcommand{\vb}[1]{\mathbf{#1}}

\begin{document}
\raggedbottom
\emergencystretch=4em

\title[Connected Fair Detachments II]{Connected Fair Detachments of Hypergraphs II}
\author{Amin Bahmanian}
\address{Department of Mathematics, Illinois State University, Normal, IL 61790-4520, USA}
\email{mbahman@ilstu.edu}

\begin{abstract}
We study embeddings of factorizations of the $\lambda$-fold complete
$h$-uniform hypergraph $\lambda K_m^h$ in factorizations of $\lambda K_n^h$,
where $m<n$ and both the source and target degrees may vary by color.
From every embedding one can obtain another that minimizes the number of
components in every target color without increasing its multiplicity spread.
This change also does not increase the sum of any convex function of
the edge multiplicities, simultaneously in all colors.

For $h=2,3$ we give exact existence criteria for every $m<n$.
For $h\ge4$, the necessary divisibility and degree-sum conditions are
sufficient once $n\ge(h-1)m$, improving the previously known threshold
$n\ge hm$ for color-dependent degrees.  In this range we also give exact criteria for connected,
simple, and equimultiple target factors.  The proofs use a system of
integer counts recording the number of added edges of each type.
Every system satisfying these equations gives an embedding with the minimum
multiplicity spread allowed by those counts and the minimum number of
components in every color.
\end{abstract}

\subjclass[2020]{Primary 05C70; Secondary 05C65, 05C15, 05C40}
\keywords{hypergraph factorization, factorization embedding, edge-coloring, fair detachment, connected factorization, equimultiplicity, complete uniform hypergraph}
\maketitle

\section{Introduction}

Fix $n,m,h,\lambda\in\mathbb N$ with $n>m\ge h\ge2$.
Here $\mathbb N$ denotes the positive integers, and
$[k]=\{1,\dots,k\}$ for $k\in\mathbb N$.
Write $K_v^h$ for the complete $h$-uniform hypergraph on $v$ vertices
and $\lambda K_v^h$ for the hypergraph in which every $h$-set has
multiplicity $\lambda$.

Let $\cG$ be a prescribed factorization of $\lambda K_m^h$, regarded
as an edge-colored hypergraph, with $q$ nonempty colors of positive
degrees $r_1,\dots,r_q$.  Write $G_i$ for its color-$i$ factor.
Prescribe target degrees $s_1,\dots,s_k\in\mathbb N$ with $q\le k$.
For $i\in[k]\setminus[q]$, put $r_i=0$ and let $G_i$ be the empty
spanning hypergraph on the $m$ original vertices.  Assume $r_i\le s_i$ for
$i\in[k]$.  We call the colors in $[q]$ the \emph{source colors},
and those in $[k]\setminus[q]$ the \emph{additional target colors}.

An \emph{embedding} of $\cG$ is a factorization
$\cF=\{F_1,\dots,F_k\}$ of $\lambda K_n^h$ in which $F_i$ is
$s_i$-regular and its restriction to the original $m$ vertices is
$G_i$ for $i\in[k]$.
Let $\mathscr E$ be the set of embeddings with these prescribed target degrees.
For a target factor $F_i$, write $c(F_i)$ for its number of components and
\[
 \operatorname{spr}(F_i)
 =\max\nolimits_{e\in E(K_n^h)}\mult_{F_i}(e)
  -\min\nolimits_{e\in E(K_n^h)}\mult_{F_i}(e).
\]
The extrema include underlying edges of multiplicity zero.

Every embedding must satisfy
\[
 h\mid ns_i\quad\text{for }i\in[k],
 \qquad
 \sum\nolimits_{i\in[k]}s_i=\lambda\binom{n-1}{h-1}.
\]
We call these two conditions \emph{admissibility}, always together with
the standing assumptions above.

\begin{theorem}\label{CCAII:thm:main}
For every embedding $\cF\in\mathscr E$, there is an embedding
$\cF'\in\mathscr E$ whose factors simultaneously have the minimum
possible component counts and satisfy
$\operatorname{spr}(F_i')\le\operatorname{spr}(F_i)$ for $i\in[k]$.
\end{theorem}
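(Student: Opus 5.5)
The approach is the amalgamation--detachment method. Starting from an embedding $\cF\in\mathscr E$, I would amalgamate the $n-m$ new vertices into a single vertex $u$, keeping all colors. This gives an edge-colored hypergraph $\cH$ on the $m$ original vertices plus $u$. In $\cH$, an edge containing $j$ copies of $u$ (for $1\le j\le h-1$) and an $(h-j)$-subset $S$ of the original vertices records how many color-$i$ edges of $\cF$ meet the new vertices in exactly $j$ points and the old ones in $S$. The amalgamation $\cH$ retains exactly the data an embedding must respect: the restriction to the old vertices is $\cG$, the degrees at the old vertices are $s_i$, and the color-$i$ degree of $u$ is $(n-m)s_i$. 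Conversely, any ``fair'' detachment of $\cH$, in which $u$ splits into $n-m$ vertices, gives a factorization of $\lambda K_n^h$ extending $\cG$ with regular factors, hence an embedding. It therefore suffices to re-detach $\cH$ more cleverly than $\cF$ does.

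Next I would construct the detachment one new vertex at a time, in the style of Bahmanian's connected fair detachment results for hypergraphs, presumably the main tool from Part~I. At each step, a vertex of multiplicity $t$ at $u$ is split off. The splitting must satisfy three conditions: (i) multiplicities in each edge type are shared as evenly as possible among the new vertices, which is fairness; (ii) color degrees are shared evenly; (iii) the number of components in each color does not increase beyond what connectivity of the amalgamation forces. The minimum possible component count of $F_i$ equals the number of components of the color-$i$ part of $\cH$, except that $u$ may be forced to split when the color degree is too small. For example, $c$ is bounded below by how the new-vertex degree $s_i$ and the multiplicities allow components to sit in the new part. So I would first characterize $\min c(F_i)$ over $\mathscr E$ in terms of $\cH$. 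This lower bound holds for every embedding with the same amalgamation, and indeed with any amalgamation. I would then show the detachment attains it in all colors simultaneously. Because the attaining is done color by color within one detachment, no trade-off between colors arises.

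For the spread condition, I would use fairness. In a fair detachment, each edge type's color-$i$ multiplicity is split as evenly as possible among the $\binom{n-m}{j}$ new $j$-subsets. Thus the maximum and minimum color-$i$ multiplicities over edges of that type lie between the floor and ceiling of the average. Those extremes are attained in $F_i$ itself over the same type, since the average lies between $F_i$'s own extremes. The old-vertex edges are unchanged. Hence $\max$ does not increase and $\min$ does not decrease, so $\operatorname{spr}(F_i')\le\operatorname{spr}(F_i)$. One subtlety is that when the average is an integer, spread $0$ within the type is achieved. The comparison then holds type by type, which controls the global max and min.

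The main obstacle is the simultaneous connectivity step. Evenly distributing multiplicities and degrees in all colors while also merging components in each color is the hard part. Fairness may force some edge to go to a particular new vertex, which could block a merge. I would first try the standard alternating-walk or edge-swap argument. Whenever a color $i$ has more components than the minimum, find two components and a pair of edges of color $i$ and some other color $i'$ on the same edge type. Swapping their copies of $u$ between two new vertices preserves all fairness counts and degrees, so the spread is unchanged. The swap should reduce $c(F_i)$ without increasing $c(F_{i'})$, giving a potential function $\sum_i c(F_i)$ that strictly decreases. Verifying that such a swap always exists unless every color is already at its minimum is where I expect the real work to lie.
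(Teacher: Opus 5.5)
Your reduction and your spread argument match the paper. You amalgamate the $N=n-m$ new vertices into one vertex $\alpha$ (your $u$) and re-detach. Within each class of added edges with fixed old part $S$ and $t$ new vertices, a fair detachment makes every color-$i$ multiplicity the floor or ceiling of the class average, which lies between that class's original extremes; old edges are fixed.

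\textbf{The gap is the component claim, which is the substance of the theorem.} You do not determine the minima, you do not show that one detachment attains them in every color, and you explicitly leave the decisive step open. The missing ingredient is the connectivity clause (C) of Lemma~\ref{CCAII:mainthconfdet}. When $\alpha$ is split fairly into all $N$ vertices \emph{in a single step}, the same detachment has $c(F_i)=c(H_i)$ in exactly those colors with $\deg_{H_i}(\alpha)-\omega_\alpha(H_i)\ge N-1$. The proof then reduces to counting $\alpha$-wings:
\begin{enumerate}
\item For a source color with $s_i>r_i$, every old vertex meets an added edge, so $H_i$ is connected. Each wing has degree at least $2$ at $\alpha$: a wing meeting $\alpha$ once, in an edge $e$, must contain an old edge $f$ through an old vertex of $e$, and hence an added edge through some $w\in f\setminus e$.
\item For an additional color with $s_i\ge2$, the wing through an old vertex contains all $s_i$ edges there.
\item In both of these cases $\deg_{H_i}(\alpha)-\omega_\alpha(H_i)\ge Ns_i/2\ge N-1$, so $c(F_i')=1$.
\item If $s_i=r_i\ge2$, the wings are the $Ns_i/h$ copies of $\alpha^h$. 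The difference is $Ns_i(1-1/h)\ge N-1$, giving $c(G_i)+1$.
\item Degree one is forced.
\end{enumerate}
The matching lower bounds come from three facts: $c(F_i)\ge c(H_i)$ for every embedding, the old components stay separate from the nonempty new part when $s_i=r_i$, and every edge of a $1$-factor is a component.

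The substitute you propose does not close this. As described, your swap exchanges new vertices $v,w$ between a color-$i$ edge and a color-$i'$ edge with the same remaining vertices. It preserves the uncolored hypergraph, but it moves one unit of color-$i$ degree from $v$ to $w$ (and color-$i'$ degree from $w$ to $v$), so regularity is lost. It can also push a color-$i$ multiplicity in that class outside the floor/ceiling range. Compensating swaps would be needed, and controlling $c(F_{i'})$ under them is exactly the simultaneous problem you set out to solve.

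Separately, if you split off one new vertex at a time, your spread argument needs the nested roundings to still produce the floor or ceiling of the $N$-way class average in every class and color. Iterated splitting need not do this; the paper's example is $8\to4,4$ followed by $4\to2,2$, which gives $2,2,4$. Lemma~\ref{CCAII:mainthconfdet} avoids the issue by performing one $N$-way split that delivers (F1), (F2) and (C) together.
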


Admissibility is sufficient in the following range.

\begin{theorem}[Admissibility threshold]\label{CCAII:thm:main-existence}
Suppose either $h\in\{2,3\}$ and $n\ge2m$, or $h\ge4$ and
$n\ge(h-1)m$.  Then $\mathscr E\ne\varnothing$ if and only if
admissibility holds.
\end{theorem}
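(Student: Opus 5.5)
Necessity is immediate: in an embedding each $F_i$ is $s_i$-regular on $n$ vertices, so $h\mid ns_i$. Also the factors partition $\lambda K_n^h$, whose vertex degree is $\lambda\binom{n-1}{h-1}$, which gives the degree-sum identity. For sufficiency I will follow the strategy announced in the abstract: reduce existence of an embedding to an integer system of ``edge-type counts,'' and then solve that system explicitly in the stated range.

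Let $X$ be the $m$ original vertices and $Y$ the $n-m$ new ones. A new edge has \emph{type} $j$ if it meets $X$ in exactly $j$ vertices, for $0\le j\le h-1$. Shrink $Y$ to a single vertex $\alpha$ carrying loops of multiplicity up to $h$, so that a type-$j$ edge becomes an edge with $h-j$ copies of $\alpha$. For each color $i$ and type $j$, let $a_{ij}\ge0$ be the number of color-$i$ type-$j$ edges. The count system is as follows.
\begin{enumerate}[label=(\roman*)]
\item For each $j$, $\sum_i a_{ij}=\lambda\binom{m}{j}\binom{n-m}{h-j}$, since every type is used up.
\item Each vertex of $X$ receives degree $s_i-r_i$ in color $i$ from added edges, so $\sum_j j\,a_{ij}=m(s_i-r_i)$.
\item The new vertices receive total color-$i$ degree $(n-m)s_i$, so $\sum_j (h-j)a_{ij}=(n-m)s_i$.
\item For each fixed type $j$, a color-$i$ distribution of the type-$j$ edges over the $j$-subsets of $X$ must fit within the multiplicity $\lambda\binom{n-m}{h-j}$ available per $j$-set. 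This is a transportation-type feasibility condition.
\end{enumerate}
I take as given the paper's detachment/amalgamation step: any solution of this system lifts to an embedding. The mechanism is to first distribute the type-$j$ counts fairly over the $j$-subsets of $X$, which is a balanced (Baranyai/Hilton-type) coloring of the amalgamated hypergraph. Then detach $\alpha$ into $n-m$ vertices by a fair detachment, with connectivity handled by Theorem~\ref{CCAII:thm:main}. So the task reduces to finding nonnegative integers $a_{ij}$ satisfying (i)–(iv).

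To construct the $a_{ij}$, I would use only a few types. Color $i$ must absorb $m(s_i-r_i)$ from $X$-incidences and $(n-m)s_i$ from $Y$-incidences. I would greedily fill each color with type-$(h-1)$ and type-$0$ edges, plus types $1,\dots,h-2$ as needed to fix the ratio between the two absorptions. Summing (ii) and (iii) gives $h\sum_j a_{ij}=ns_i-mr_i$. The right side is divisible by $h$, because $h\mid ns_i$ by admissibility and $h\mid mr_i$ since $G_i$ is $r_i$-regular on $m$ vertices. So each color has an integral total edge count. The degree-sum condition ensures that the column sums in (i) are compatible with these totals. I would then assign colors to types sequentially in a ``staircase'' fashion, as a northwest-corner rule for a transportation matrix with row totals $(ns_i-mr_i)/h$ and column totals $\lambda\binom{m}{j}\binom{n-m}{h-j}$. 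The free parameters would be used to meet the weighted constraint (ii).

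The main obstacle is satisfying (ii) and (iv) simultaneously. The available $X$-incidence ratio per color ranges over $[0,(h-1)/h]$. A color with $r_i$ close to $s_i$ needs little $X$-incidence, while a color with $r_i=0$ needs ratio $m/n$. Feasibility requires the per-color demands to lie within the convex hull of what the types provide, with the types' total capacity shared across colors. I expect this to give an inequality of the form $m(s_i-r_i)\le\frac{h-1}{h}(ns_i-mr_i)$ together with a capacity bound on the high-$j$ types. The latter holds automatically once $n-m\ge(h-2)m$, i.e.\ $n\ge(h-1)m$. For $h=2,3$ I would check the analogous bound directly and find that $n\ge 2m$ suffices. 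The delicate point is integrality after rounding: I would resolve it by adjusting a pair of adjacent types $j,j+1$ for one color at a time, since such an adjustment changes (ii) by exactly one unit.
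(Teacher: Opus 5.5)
\textbf{There is a genuine gap: the solvability of the count system is never proved.} Your necessity argument and your reduction to the counts (i)--(iii) agree with the paper; this reduction is Theorem~\ref{CCAII:thm:new-edge-count-criterion}. But the proposal stops where the content of the theorem begins. You never show that the count equations have a nonnegative integer solution when $n\ge(h-1)m$, or when $n\ge2m$ for $h=3$. ``I would greedily fill'', ``I expect'', and ``holds automatically once $n\ge(h-1)m$'' stand in for that argument.

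You have also misplaced the difficulty. Condition (iv) is not a constraint. Given (i) and (ii), Theorem~\ref{CCAII:thm:balanced-R} spreads each color's edges with $j$ old vertices equimultiply over the $j$-subsets, and the uncolored multiplicity is then exactly $\lambda\binom{n-m}{h-j}$. Your per-color inequality $m\delta_i\le(h-1)e_i$, with $\delta_i=s_i-r_i$ and $e_i=(ns_i-mr_i)/h$, is automatic in the stated ranges. The real work is to place the $\lambda M_u$ edge copies with $u\ge2$ old vertices so that two things hold. No color may exceed its old-incidence demand $m\delta_i$. Every color must get enough of them that its all-new count $e_i-\sum_{j\ge1}a_{ij}$ stays nonnegative. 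When $n<hm$, a color with $m\delta_i>e_i$ cannot use only edges with one old vertex, and the edges with several old vertices are shared by all colors.

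The paper supplies this capacity argument in Lemmas~\ref{CCAII:lem:incidence-reserves} and~\ref{CCAII:lem:direct-mixed-allocation}:
\begin{itemize}
\item Edges with $h-1,\dots,2$ old vertices are assigned greedily, first to colors whose deficit $p_i=\max\{0,m\delta_i-e_i\}\le m\delta_i/h$ is not yet covered.
\item The reserve inequality $\sum_{v<u}vM_v>(u-1)D$ guarantees that some color always has room for the next edge.
\item The bound $N\ge mD/h$ guarantees that all deficits are covered. Here $M_v$ counts underlying edges with $v$ old vertices, $N=\sum_{u\ge2}M_u$, and $D=B-A$.
\item Both inequalities use $n\ge(h-1)m$. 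Edges with one old vertex then fill the remaining incidences, and all-new edges take the rest.
\end{itemize}

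Your northwest-corner-plus-exchange scheme does not replace this. The northwest-corner rule fixes only the totals $e_i$ and the columns, not (ii). Moving one edge from $j$ to $j+1$ old vertices in one color changes two column totals, so it must be paired with the reverse move in another color. Showing that such exchanges reach (ii) in every color with no entry going negative is exactly the missing estimate. A convex-hull argument would give only fractional feasibility.

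Likewise, for $h=3$, ``check directly'' hides the needed inequality. Each color has one integer parameter in $[\max\{0,\beta_i\},\lfloor m\delta_i/2\rfloor]$, where $\beta_i=((3m-n)s_i-2mr_i)/3$. The sum condition $\sum_i\max\{0,\beta_i\}\le\lambda C_1$ follows from $6C_1-2(2m-d)(B-A)=d(d-m)(d+m-3)\ge0$, with $d=n-m$. For $h=2$ the counts are forced, and nonnegativity reduces to $mr_i\ge(2m-n)s_i$.
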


Theorem~\ref{CCAII:thm:main} follows from
Lemma~\ref{CCAII:lem:component-optimal}, and
Theorem~\ref{CCAII:thm:main-existence} follows from
Corollary~\ref{CCAII:cor:h23-large-order} and
Theorem~\ref{CCAII:thm:large-order-optimal}.
Whenever $\mathscr E\ne\varnothing$, every real-valued function of the
$k$ spreads that is nondecreasing in each coordinate can be minimized
while all component counts are minimum.
The replacement in Lemma~\ref{CCAII:lem:component-optimal} also does not
increase the sum of any convex function of the edge multiplicities in
any color.

Part~I~\cite{Bahmanian2026CFD} provides fair detachments controlling
degrees, multiplicities, and components.
Here they turn integer counts into embeddings, where the counts record,
for each color, how many added edges contain each possible number of new vertices.
Once these counts are fixed, a single embedding simultaneously attains the
minimum permitted spread and the minimum component count in every color.
For $h=2,3$, this gives exact existence criteria for every $m<n$ and
exact spread information when the counts are forced.
Individual minimum spreads need not be simultaneously attainable when
$h=3$, so the colors must be optimized jointly.
For $h\ge4$ and $n\ge(h-1)m$, the criterion reduces to admissibility,
and we also obtain exact conditions for connected, simple, and
equimultiple target factors.

These results continue a classical embedding problem.
Baranyai proved that $K_n^h$ is $1$-factorizable whenever
$h\mid n$~\cite{Baranyai1975}.
Cameron asked when partial parallelisms embed in
parallelisms~\cite{Cameron1976}, and Baranyai and Brouwer studied the
corresponding problem for complete uniform
hypergraphs~\cite{BaranyaiBrouwer1977}.
H\"aggkvist and Hellgren developed further embedding
methods~\cite{HH1993}.
For $\lambda=1$ and equal source and target degrees, Bahmanian and
Newman proved sufficiency for $n\ge2m$ under
$\gcd(m,n,h)=\gcd(n,h)$, giving a complete criterion in particular
when $h$ is prime~\cite{BN2018}.
Further results for the $4$-uniform equal-degree problem were obtained
in~\cite{BH2021}.
For color-dependent degrees, connected embeddings for $n\ge hm$ were
obtained in~\cite{BJN2022}; the bound $n\ge(h-1)m$ improves this
threshold for $h\ge4$.
Exact connected criteria for partial factorizations in the range
$n\ge(h-1)(2m-1)$ were obtained in~\cite{BJY2025}.
For graphs, Hilton--Johnson--Rodger--Wantland obtained an exact
connected embedding criterion for simple complete
graphs~\cite{HJRW2003}, and Johnson generalized it to prescribed
factor degrees and edge-connectivities~\cite{Johnson2007}.

Section~\ref{CCAII:sec:preliminaries} recalls fair detachment.
Section~\ref{CCAII:sec:factorization-consequences} proves the
nonuniform factorization result used in
Section~\ref{CCAII:sec:regular-embedding} to characterize embeddings
through their added-edge counts.
Sections~\ref{CCAII:sec:h23-main} and~\ref{CCAII:sec:large-order}
treat $h=2,3$ and $h\ge4$, respectively.
Section~\ref{CCAII:sec:further-consequences} determines possible
target degrees and compares iterated embeddings with direct embeddings.

\section{Preliminaries and fair detachment}\label{CCAII:sec:preliminaries}

For $x,y\in\mathbb R$, write $x\approx y$ when
$\lfloor y\rfloor\le x\le\lceil y\rceil$.
For $a,b\in\mathbb N$ and $x\in\mathbb R$, we use
$\lfloor\lfloor x/a\rfloor/b\rfloor=\lfloor x/(ab)\rfloor$ and
$\lceil\lceil x/a\rceil/b\rceil=\lceil x/(ab)\rceil$.

The edge set of a hypergraph is a multiset, and an edge may itself be a
multiset of vertices.  Thus both parallel edges and repeated occurrences
of a vertex within an edge are allowed.
The size of an edge counts these occurrences.
A hypergraph is \emph{$h$-uniform} if every edge has size $h$,
and \emph{loopless} if no edge repeats a vertex.
The vertex multiset obtained by forgetting the choice of parallel
copy is the \emph{underlying edge}.
Write $\mult_H(e)$ for its number of copies in $H$.
The degree $\deg_H(v)$ is the total number of occurrences of $v$
among the edges.

Connectivity is in the Berge sense: vertices $v_0,v_t$ are joined
if there is an alternating sequence
$v_0,e_1,v_1,\dots,e_t,v_t$ with $v_{j-1},v_j\in e_j$ for $j\in[t]$.
We count isolated vertices as components.
A hypergraph is \emph{nontrivial} if it has at least one edge.

A $k$-coloring is a map from the edge copies to $[k]$, and $H_i$
is the spanning subhypergraph formed by the edges of color $i$.
A $\vb d$-factorization, where $\vb d=(d_1,\dots,d_k)$, has
$H_i$ $d_i$-regular for $i\in[k]$.
If all degrees are $d$, we call it a $d$-factorization.
In particular, a factor of degree zero is the empty spanning
hypergraph and has one component for each vertex.

A loopless hypergraph is \emph{simple} if every underlying edge has
multiplicity at most one, and has \emph{multiplicity at most $b$}
if every multiplicity is at most $b$.
A loopless $h$-uniform hypergraph $H$ is \emph{equimultiple} if
$|\mult_H(e)-\mult_H(f)|\le1$ for all
$e,f\in\binom{V(H)}h$.
For a nonuniform hypergraph, equimultiplicity is required separately
within each edge size.  Every simple hypergraph is equimultiple
within each edge size.

For a factorization of $\lambda K_n^h$, requiring
$\mult_{F_i}(e)\approx\lambda/k$ for every color and underlying edge
makes every factor equimultiple.  When all factors have the same degree
$s$, their average edge multiplicity is
$s/\binom{n-1}{h-1}=\lambda/k$, so equimultiplicity implies the same
condition.  For unequal degrees, however, the color averages may differ,
and the two conditions need not coincide.

A \emph{detachment} replaces a vertex by several vertices and
distributes its occurrences among them, preserving the edge copies,
their sizes, and their colors.
The reverse operation is an \emph{amalgamation}.
A detachment is \emph{fair} when degrees and multiplicities are
distributed as evenly as possible in the whole hypergraph and in
each color.

A vertex $\alpha$ of a connected hypergraph $H$ is a \emph{cut vertex}
if $H=I\cup J$ for nontrivial subhypergraphs $I,J$ with
$V(I\cap J)=\{\alpha\}$ and $E(I\cap J)=\varnothing$.
An \emph{$\alpha$-wing} is a nontrivial connected subhypergraph $W$
containing $\alpha$, with $\alpha$ not a cut vertex of $W$, such that
no edge outside $W$ is incident with a vertex of
$V(W)\setminus\{\alpha\}$.
Write $\omega_\alpha(H)$ for the number of $\alpha$-wings, computed
in the component containing $\alpha$ if $H$ is disconnected.
An isolated $\alpha$ has no wings.
The wings partition the edge copies in that component, so their
degrees at $\alpha$ sum to $\deg_H(\alpha)$; see
Part~I~\cite[Section~3]{Bahmanian2026CFD}.

If $S$ is a vertex set not containing $\alpha$, write $S\alpha^r$
for the multiset obtained by adjoining $r$ copies of $\alpha$ to $S$.
In particular, $\alpha^r$ has $r$ occurrences of $\alpha$.

\begin{lemma}[{\cite[Theorem~1.1]{Bahmanian2026CFD}}]\label{CCAII:mainthconfdet}
Let $\cH$ be a $k$-colored hypergraph, $\alpha\in V(\cH)$, and
$g\in\mathbb N$.  Suppose no edge contains $\alpha$ more than $g$ times.
Then $\alpha$ can be split into a set $A$ of $g$ vertices to obtain
$\cF$ in which no edge repeats a vertex of $A$.
For $i\in[k]$, $x\in A$, nonempty $U\subseteq A$, and
$S\subseteq V(\cH)\setminus\{\alpha\}$,
\[
 \begin{aligned}
 \textup{(F1)}\quad&
 \deg_{F_i}(x)\approx\deg_{H_i}(\alpha)/g,\\
 \textup{(F2)}\quad&
 \mult_{F_i}(S\cup U)
 \approx \mult_{H_i}(S\alpha^{|U|})\big/\binom g{|U|},\\
 \textup{(C)}\quad&
 c(F_i)=c(H_i)
 \quad\Longleftrightarrow\quad
 \deg_{H_i}(\alpha)-\omega_\alpha(H_i)\ge g-1.
 \end{aligned}
\]
The same degree and multiplicity conclusions hold after omitting
the color index.
\end{lemma}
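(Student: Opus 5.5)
The plan is to build the detachment one vertex at a time. At each step we split off from $\alpha$ a single new vertex $x$ carrying a $1/g'$ share, where $g'$ is the number of vertices still to be produced. The remaining vertex $\alpha'$ keeps the $(g'-1)/g'$ share and becomes the new $\alpha$. After $g-1$ steps, $A$ consists of the split-off vertices together with the final remainder. The floor/ceiling identities $\lfloor\lfloor x/a\rfloor/b\rfloor=\lfloor x/(ab)\rfloor$ (and the ceiling analogue) recorded in Section~\ref{CCAII:sec:preliminaries} are what make this iteration preserve $\approx$. So it suffices to prove a single-step statement with shares $1/g'$ versus the rest, in which every count that (F1) and (F2) refer to is split fairly.

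For the single step, index the occurrences of $\alpha$. For an edge copy of the form $S\alpha^{j}$ (possibly with earlier-split vertices of $A$ in $S$), we decide which of its $j$ occurrences, if any, goes to $x$. Since no edge may repeat $x$, at most one occurrence per edge copy can go to $x$; the others stay at $\alpha'$. The quantities to control are the following.
\begin{enumerate}[label=(\roman*)]
\item The number of occurrences moved to $x$ in color $i$, and in total, which gives (F1).
\item For each color $i$, each $S$ and each $j$, the number of copies of $S\alpha^j$ in which $x$ receives an occurrence, which gives (F2) after rewriting $\binom{g'}{j}$-shares as $\binom{g'-1}{j-1}/\binom{g'}{j}=j/g'$.
\end{enumerate}
These constraint sets form two laminar families: color-refined classes nested inside the uncolored classes, and the classes by $(S,j)$ nested inside colors. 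A Nash-Williams--type lemma for two laminar families (equivalently, integer rounding of a flow in a bipartite network with lower and upper bounds given by floors and ceilings) then produces a selection that is fair on every member of both families simultaneously. I would set this up as a network whose integrality comes from total unimodularity of the incidence matrix of two laminar families.

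For (C), necessity is a counting argument. In colour $i$, the component of $H_i$ containing $\alpha$ becomes a single component of $F_i$ only if the $g$ new vertices are linked through the wings. A wing $W$ with $\deg_W(\alpha)=d_W$ can touch at most $d_W$ new vertices and therefore supplies at most $d_W-1$ links among them. Summing over wings gives the requirement $\sum_W(d_W-1)=\deg_{H_i}(\alpha)-\omega_\alpha(H_i)\ge g-1$. Any detachment splits components only at $\alpha$, so the count otherwise matches.

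For sufficiency, I would strengthen the laminar families. Each wing's occurrences (within its colour) form an extra class, nested in the colour class, and we require its occurrences to be spread so that each wing meets as many distinct new vertices as its degree allows. The successive splits should then be chained: each newly created $x$ shares a wing with $\alpha'$. This keeps the auxiliary graph (new vertices joined by wings) connected throughout, and the hypothesis guarantees enough spare occurrences at every step.

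I expect the main obstacle to be making connectivity compatible with fairness. Laminarity is threatened because wing classes cut across the $(S,j)$ classes. My first attempt would be to put wings into the colour-side family, which is legitimate since wings are colour-specific and partition the colour-$i$ occurrences, and keep $(S,j)$ classes on the other side. Then at each step I would prove that a fair selection exists in which $x$ meets a wing not yet exhausted. The argument would be an exchange along an alternating path in the network whenever the fair selection fails to do so, using $\deg-\omega\ge g-1$ to show that such a path exists.
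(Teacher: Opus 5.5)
\textbf{There is a genuine gap: the sufficiency half of (C), which is the substance of the lemma, is not proved.} The paper does not prove this lemma; it imports it from Part~I, so there is no in-paper argument to compare against. Your treatment of (F1) and (F2) is sound: you split off one vertex with share $j/g'$ per edge, round simultaneously over two laminar families, and iterate with the nested floor--ceiling identities. Your counting proof that the inequality in (C) is necessary is also correct.

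You never determine how a single step changes $\deg-\omega$. The sentence ``the hypothesis guarantees enough spare occurrences at every step'' is exactly what has to be proved. Fix a color $i$ with $D=\deg_{H_i}(\alpha)$, $\omega=\omega_\alpha(H_i)$ and $D-\omega\ge g'-1$. Let $x$ take $c_W$ occurrences from each wing $W$ of degree $d_W$, and put $T=\{W:c_W>0\}$. If some $W\in T$ has $c_W<d_W$, the wings in $T$ merge through $x$ into a single $\alpha'$-wing, and
\[
\deg_{H_i'}(\alpha')-\omega_{\alpha'}(H_i')=(D-\omega)-1-\sum\nolimits_{W\in T}(c_W-1).
\]
Otherwise $x$ and these wings are cut off from $\alpha'$. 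So every step must do two things, in all colors at once:
\begin{enumerate}
\item[(a)] keep the repeated hits $\sum_{W\in T}(c_W-1)$ within the slack $(D-\omega)-(g'-1)$;
\item[(b)] keep $x$ attached to $\alpha'$.
\end{enumerate}
Your proposed repair is an alternating-path exchange in color $i$. Such an exchange generally changes uncolored $(S,j)$ counts, which must then be compensated in other colors. Nothing in your sketch prevents that compensation from destroying (a) or (b) there. As written, the simultaneous statement is unproved.

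\textbf{How your framework closes without any exchange.} In color $i$, all copies of $S\alpha^j$ with $S\ne\varnothing$ lie in the wing containing $S$, and each copy of $\alpha^j$ is its own wing. So you can add two kinds of classes to the color-side family without losing laminarity:
\begin{itemize}
\item $E_{i,W}$, the color-$i$ edge copies at $\alpha$ in wing $W$;
\item $E_i^{\ge2}$, the union of the $E_{i,W}$ over wings with $d_W\ge2$.
\end{itemize}
Your worry that wings cut across the $(S,j)$ classes therefore does not arise. Rounding then gives $c_W\le\lceil d_W/g'\rceil$.
\begin{itemize}
\item For (a): $\sum_W(d_W-\lceil d_W/g'\rceil)\ge g'-1$. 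A wing with $d_W\ge g'$ alone contributes at least $g'-1$; otherwise every ceiling is $1$ and the sum is $D-\omega$. This bounds the repeated hits by the slack.
\item For (b): $\sum_{d_W\ge2}d_W\ge(D-\omega)+1\ge g'$. So the floor on $E_i^{\ge2}$ forces $x$ into some wing with $d_W\ge2$, where $c_W\le\lceil d_W/2\rceil<d_W$.
\end{itemize}
Hence after the step $\deg-\omega\ge(g'-1)-1$ and $c(H_i')=c(H_i)$, in every color satisfying the hypothesis simultaneously. Induction on $g$, together with your necessity argument, then yields (C).
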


We refer to \textup{(F1)} as \emph{degree fairness},
\textup{(F2)} as \emph{multiplicity fairness}, and
\textup{(C)} as the \emph{connectivity condition}.

\section{Nonuniform regular factorizations}\label{CCAII:sec:factorization-consequences}

In this section, fix $n,p,k,\lambda_1,\dots,\lambda_p\in\mathbb N$
with $n\ge2$, and edge sizes $1\le h_1<\cdots<h_p\le n$.
For $i\in[k]$, prescribe $d_i,a_{i1},\dots,a_{ip}\in\mathbb N\cup\{0\}$,
where $a_{ij}$ is the number of edges of size $h_j$ in color $i$.

\begin{theorem}[Nonuniform factorization]\label{CCAII:thm:balanced-R}
The hypergraph $\bigcup\nolimits_{j\in[p]}\lambda_jK_n^{h_j}$
has a factorization $\{F_1,\dots,F_k\}$ in which $F_i$ is $d_i$-regular,
is equimultiple within each edge size, and has $a_{ij}$ edges of size
$h_j$, if and only if
\[
 \sum\nolimits_{j\in[p]}h_ja_{ij}=nd_i\quad\text{for }i\in[k],
 \qquad
 \sum\nolimits_{i\in[k]}a_{ij}=\lambda_j\binom n{h_j}
       \quad\text{for }j\in[p].
\]
Whenever these conditions hold, the same factorization may be chosen so that
\[
 F_i\text{ is connected}
 \quad\Longleftrightarrow\quad
 \sum\nolimits_{j\in[p]}a_{ij}\le nd_i-n+1
 \quad\text{for }i\in[k].
\]
\end{theorem}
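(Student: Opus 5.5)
Necessity of the two counting conditions is immediate.  Summing degrees over the $n$ vertices in $F_i$ counts each edge of size $h_j$ exactly $h_j$ times, which gives $\sum_j h_ja_{ij}=nd_i$.  Partitioning the $\lambda_j\binom n{h_j}$ edges of size $h_j$ among the colors gives the second identity.  For the connectivity criterion, a connected hypergraph on $n$ vertices with $e$ edges whose sizes sum to $N=nd_i$ satisfies $N-e\ge n-1$.  To see this, build the hypergraph one edge at a time: each edge of size $t$ merges at most $t$ components into one, so it lowers the component count by at most $t-1$.  Hence connectivity forces $\sum_j a_{ij}\le nd_i-n+1$.

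For sufficiency I will amalgamate all $n$ vertices into one vertex $\alpha$ and then detach.  Let $\cH$ be the one-vertex hypergraph that, for each $i$ and $j$, has $a_{ij}$ edges of color $i$ equal to $\alpha^{h_j}$.  The second condition makes the total number of edges of size $h_j$ equal to $\lambda_j\binom n{h_j}$.  The first condition makes $\deg_{H_i}(\alpha)=nd_i$.  Since $h_j\le n$, no edge contains $\alpha$ more than $n$ times, so Lemma~\ref{CCAII:mainthconfdet} applies with $g=n$ and $S=\varnothing$.

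We then read off the properties of the resulting detachment $\cF$:
\begin{itemize}[leftmargin=*]
\item No edge repeats a vertex of $A$, so every edge of size $h_j$ is an $h_j$-subset of $A$.
\item By (F1), $\deg_{F_i}(x)\approx nd_i/n=d_i$, which is an integer, so $F_i$ is $d_i$-regular.
\item By (F2) without the color index, each $h_j$-subset $U$ has multiplicity $\approx\lambda_j\binom n{h_j}/\binom n{h_j}=\lambda_j$, so the union of the colors is exactly $\bigcup_j\lambda_jK_n^{h_j}$.
\item By (F2) with the color index, each $h_j$-set has multiplicity $\approx a_{ij}/\binom n{h_j}$ in $F_i$, so $F_i$ is equimultiple within each edge size.
\item The edge counts $a_{ij}$ are preserved by construction, since detachment keeps every edge copy, its size, and its color.
\end{itemize}

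For the connectivity clause, apply (C).  Since $c(H_i)=1$, we get $c(F_i)=1$ if and only if $nd_i-\omega_\alpha(H_i)\ge n-1$.  The main obstacle is computing $\omega_\alpha(H_i)$ in this one-vertex hypergraph.  I would argue that every edge of $H_i$ (each is a loop $\alpha^{h_j}$) forms its own $\alpha$-wing, because a wing may contain only $\alpha$ and must not have $\alpha$ as a cut vertex.  Part~I's convention states that the wings partition the edges with degrees summing to $\deg(\alpha)$.  I expect this convention to give $\omega_\alpha(H_i)=\sum_j a_{ij}$, possibly with care needed for loops of size $1$.  The criterion then becomes exactly $\sum_j a_{ij}\le nd_i-n+1$.

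If $d_i=0$, then $F_i$ is empty, and the case is consistent: it is disconnected for $n\ge2$, and the inequality $0\le 1-n$ also fails.  Because one detachment serves all colors simultaneously, the same factorization is connected in every color that satisfies the inequality.  This completes the equivalence.
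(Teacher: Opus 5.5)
Your proposal is correct and follows the paper's proof essentially step for step: the same one-vertex amalgamation with $a_{ij}$ copies of $\alpha^{h_j}$ in color $i$, the same detachment into $n$ vertices via Lemma~\ref{CCAII:mainthconfdet}, and the same use of the connectivity condition with $\omega_\alpha(H_i)=\sum_j a_{ij}$; your edge-by-edge merging bound is just a restatement of the paper's count on the connected incidence bipartite graph. The wing count you hedge on is immediate, loops of size one included: any subhypergraph of the one-vertex $H_i$ with two or more edges has $\alpha$ as a cut vertex (split off a single edge), so the wings are exactly the single edge copies.
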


\begin{proof}
The two equations count, respectively, incidences in each color and
edge copies of each size.
If $F_i$ is connected, its incidence bipartite graph is connected.
It has $n+\sum\nolimits_{j\in[p]}a_{ij}$ vertices and $nd_i$ edges,
so $nd_i\ge n+\sum\nolimits_{j\in[p]}a_{ij}-1$.

For sufficiency, form a colored hypergraph $\cH$ on one vertex
$\alpha$, with $a_{ij}$ copies of $\alpha^{h_j}$ in color $i$.
Every edge has at most $n$ occurrences of $\alpha$.
Also $\deg_{H_i}(\alpha)=nd_i$ and
$\omega_\alpha(H_i)=\sum\nolimits_{j\in[p]}a_{ij}$, since each edge
copy is one wing.
Apply Lemma~\ref{CCAII:mainthconfdet}, splitting $\alpha$ into $n$ vertices.
Degree fairness gives degree $d_i$ at each vertex in color $i$,
and edge-copy preservation gives the prescribed numbers $a_{ij}$.
For every $h_j$-set $U$, multiplicity fairness gives
$\mult_{F_i}(U)\approx a_{ij}/\binom n{h_j}$, so each factor is
equimultiple within that size.
After omitting the color index, it gives multiplicity
$(\sum\nolimits_{i\in[k]}a_{ij})/\binom n{h_j}=\lambda_j$ exactly.
Thus the uncolored hypergraph is the required union.

Since $c(H_i)=1$ on its one-vertex set, the connectivity condition
gives the asserted equivalence in every color simultaneously.
If $d_i=0$, the incidence equation forces $a_{ij}=0$ for every $j$,
and $F_i$ is the empty spanning hypergraph; the connectivity
inequality then fails, as required.
\end{proof}

If $h_1\ge2$, a color of degree $d_i\ge2$ can be made connected:
the incidence equation gives
$\sum\nolimits_{j\in[p]}a_{ij}\le nd_i/2\le nd_i-n+1$.
For $d_i=1$, the connectivity inequality requires one edge,
which must be spanning.  Degree-zero colors are empty.

The construction also respects bounds $b_{ij}\in\mathbb N$ on
multiplicity within size $h_j$ exactly when
$a_{ij}\le b_{ij}\binom n{h_j}$.
Necessity follows by summing the bound, and sufficiency follows
because the ceiling of $a_{ij}/\binom n{h_j}$ is at most $b_{ij}$.
In the uniform case, write $h=h_1\ge2$ and $\lambda=\lambda_1$.
An equimultiple factorization of
$\lambda K_n^h$ with degree $d_i$ and multiplicity at most
$b_i\in\mathbb N$ in color $i$ exists exactly when
\[
 h\mid nd_i,\qquad
 d_i\le b_i\binom{n-1}{h-1}\quad\text{for }i\in[k],
 \qquad
 \sum\nolimits_{i\in[k]}d_i=\lambda\binom{n-1}{h-1}.
\]
The factors may simultaneously be chosen connected exactly for the
colors with $d_i\ge2$, or $d_i=1$ and $h=n$.
Taking $b_i=1$ gives simplicity.
A factor of constant edge multiplicity requires
$\binom{n-1}{h-1}\mid d_i$, and is then
$(d_i/\binom{n-1}{h-1})K_n^h$.

For the hypergraph with one copy of every edge of sizes $1,\dots,h$,
Berge and Johnson solved the $1$-factorization problem for $h\le4$,
and Brouwer and Tijdeman gave a complete criterion for arbitrary
$h$~\cite{BergeJohnson1977,BrouwerTijdeman1981}.

\section{Components and multiplicity spread}\label{CCAII:sec:regular-embedding}

We return to the embedding notation of Section 1.
First we minimize components while preserving multiplicity bounds.
We then determine the minimum spread for each fixed choice of
added-edge counts.

The vertices of $\cG$ are \emph{old}, and the other $n-m$ vertices
are \emph{new}.
An edge is \emph{old} if all its vertices are old and \emph{added}
otherwise; it is \emph{mixed} if it contains both old and new vertices.
An added edge containing $t$ new vertices is a \emph{type-$t$ edge}.
Put $A=\binom{m-1}{h-1}$, $B=\binom{n-1}{h-1}$,
$\tau=\min\{h,n-m\}$, and
$C_t=\binom m{h-t}\binom{n-m}t$ for $t\in[\tau]$.
Thus $C_t$ counts underlying type-$t$ edges, and $C_t>0$ for $t\in[\tau]$.

\subsection{Minimum component counts}

Assuming admissibility, put
\[
 \kappa_i=
 \begin{cases}
  1, & i\in[q],\ s_i>r_i,\\
  c(G_i)+1, & i\in[q],\ s_i=r_i\ge2,\\
  c(G_i)+(n-m)/h, & i\in[q],\ s_i=r_i=1,\\
  1, & i\in[k]\setminus[q],\ s_i\ge2,\\
  n/h, & i\in[k]\setminus[q],\ s_i=1.
 \end{cases}
\]
The quotients are integers: admissibility gives $h\mid n$ when
$s_i=1$, and $G_i$ is a $1$-factor, so $h\mid m$, when $r_i=1$.

\begin{lemma}[Component minimization]\label{CCAII:lem:component-optimal}
For every embedding $\cF$ of $\cG$, there is an embedding $\cF'$
with $c(F_i')=\kappa_i$ for $i\in[k]$.
The replacement preserves the number of added edges of each type
in every color.
For specified colors, any integer multiplicity bounds $L_i\le U_i$ for
$F_i$ remain valid for $F_i'$.
For every color and every convex function $\varphi:\mathbb R\to\mathbb R$,
the same replacement satisfies
\[
 \sum\nolimits_{e\in E(K_n^h)}
       \varphi\bigl(\mult_{F_i'}(e)\bigr)
 \le
 \sum\nolimits_{e\in E(K_n^h)}
       \varphi\bigl(\mult_{F_i}(e)\bigr).
\]
In particular, $\kappa_i$ is the minimum component count,
and simplicity or equimultiplicity in specified colors may be preserved.
\end{lemma}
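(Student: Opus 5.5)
Starting from $\cF$, I will amalgamate all $n-m$ new vertices into a single vertex $\alpha$ and then re-detach $\alpha$ with Lemma~\ref{CCAII:mainthconfdet}, taking $g=n-m$. Call the amalgamated $k$-colored hypergraph $\cH$. It has vertex set the $m$ old vertices together with $\alpha$. A type-$t$ edge $S\cup T$ of $\cF$, with $S$ old and $T$ new of size $t$, becomes $S\alpha^t$. Old edges are unchanged, so the restriction of $H_i$ to the old vertices is $G_i$.

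The re-detachment then gives an embedding. No edge contains $\alpha$ more than $\tau\le n-m$ times. By degree fairness (F1), each new vertex has degree $\approx (n-m)s_i/(n-m)=s_i$ in color $i$. Old vertices keep their degrees, and old edges are untouched. Multiplicity fairness (F2), with the color index omitted, gives $\mult(S\cup U)\approx\lambda\binom{n-m}{|U|}/\binom{n-m}{|U|}=\lambda$. So the uncolored result is $\lambda K_n^h$ and $\cF'\in\mathscr E$. Edge copies and their numbers of $\alpha$-occurrences are preserved, so the type counts are preserved in every color.

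For the bounds and the convexity claim, I fix a color $i$ and an old set $S$ with $|S|=h-t$. Let $M=\mult_{H_i}(S\alpha^t)$, which is the sum of $\mult_{F_i}(S\cup T)$ over the $N=\binom{n-m}t$ new $t$-sets $T$. By (F2), the new multiplicities $\mult_{F_i'}(S\cup U)$ are all within $\{\lfloor M/N\rfloor,\lceil M/N\rceil\}$ and have the same sum $M$. Any integers in $[L_i,U_i]$ with sum $M$ force $L_i\le\lfloor M/N\rfloor$ and $\lceil M/N\rceil\le U_i$, so the bounds survive. Among integer vectors with fixed sum, the balanced one is majorized by every other, so Karamata's inequality gives $\sum\varphi$ nonincreasing on each block. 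Old edges are unchanged, so summing over blocks gives the inequality. Equimultiplicity and simplicity are instances of the bound statement.

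The main obstacle is the component count, and I handle it through (C) in two parts. First, $c(F_i')=c(H_i)+(n-m)-1$ exactly when $\deg_{H_i}(\alpha)-\omega_\alpha(H_i)\ge n-m-1$; otherwise more components appear. Since $c(F_i')$ is not generally determined by $c(H_i)$, I would instead aim to show directly that (C) holds and that $c(H_i)$ is as small as allowed.

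Second, I would do a case analysis matching the definition of $\kappa_i$. If $s_i>r_i$, every old vertex meets an added edge in color $i$, so every old component of $G_i$ is joined to $\alpha$ and $c(H_i)=1$. Each wing at $\alpha$ carries degree at least $1$ there. The deficit $\deg-\omega$ counts extra occurrences, and I would show it is at least $n-m-1$ using $\deg_{H_i}(\alpha)=(n-m)s_i$. The lower bounds giving $\kappa_i$ as the minimum are the delicate part.
\begin{itemize}
\item When $s_i=r_i$, no added edge of color $i$ touches an old vertex. Hence $F_i$ restricted to the new vertices is an $s_i$-regular factor on $n-m$ vertices, giving at least $c(G_i)+1$ components, or $c(G_i)+(n-m)/h$ when $s_i=1$ and the factor is a perfect matching.
\item For an additional color, $s_i\ge2$ allows connectivity, and $s_i=1$ forces $n/h$ components.
\end{itemize}
For attainability in those cases, $\alpha$ is a separate component of $H_i$ whose edges are loops $\alpha^h$, each a wing. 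Then $\deg-\omega=(n-m)s_i-a\ge(n-m)(s_i-s_i/h)$, where $a$ is the number of edges, and this is at least $n-m-1$ when $s_i\ge2$. The case $s_i=1$ is where I expect the counting to need the most care.
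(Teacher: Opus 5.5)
\textbf{There is a genuine gap in the component part.} Your amalgamate-and-re-detach construction, the embedding check, the preservation of type counts, and the bound and convexity arguments all match the paper; using majorization and Karamata in place of the paper's pairwise transfers is fine.

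You have misquoted the connectivity condition. (C) says $c(F_i')=c(H_i)$ if and only if $\deg_{H_i}(\alpha)-\omega_\alpha(H_i)\ge n-m-1$. It does not say $c(F_i')=c(H_i)+(n-m)-1$. Under your reading, verifying the condition would give $n-m$ components when $s_i>r_i$ and $c(G_i)+n-m$ when $s_i=r_i\ge2$, not $\kappa_i$. Your later plan, to show that (C) holds and that $c(H_i)$ is as small as possible, only works with the correct statement.

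More seriously, in the cases $i\in[q]$ with $s_i>r_i$ and $i\in[k]\setminus[q]$ with $s_i\ge2$, you never establish the deficit inequality. Knowing that each wing has degree at least $1$ at $\alpha$ only gives $\deg_{H_i}(\alpha)-\omega_\alpha(H_i)\ge0$. The value $\deg_{H_i}(\alpha)=(n-m)s_i$ by itself says nothing about $\omega_\alpha(H_i)$.

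The missing idea is that \emph{every $\alpha$-wing has degree at least $2$ at $\alpha$}. Then $\omega_\alpha(H_i)\le(n-m)s_i/2$, so the deficit is at least $(n-m)s_i/2\ge n-m$. The claim holds in each situation:
\begin{itemize}
\item A wing with no old vertex contains an $\alpha^h$-edge, which already gives $h\ge2$ occurrences.
\item In an additional color, all $s_i\ge2$ edges at an old vertex are added edges, and the wing condition forces them into the wing containing that vertex.
\item In a source color, suppose a wing met $\alpha$ only once, on an edge $e$ through an old vertex $v$. An old edge $f\ni v$ exists because $r_i\ge1$, and the wing condition forces $f$ into the wing. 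Since $f$ has $h$ old vertices and $e$ has at most $h-1$, some $w\in f\setminus e$ lies in the wing. An added edge at $w$ exists because $s_i>r_i$, and it is forced into the wing, giving a second occurrence of $\alpha$.
\end{itemize}

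Finally, the degree-one cases you flag need no counting. In those cases the new part, or the whole factor when $i\notin[q]$, is a $1$-factor, so its number of components is forced for every detachment.
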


\begin{proof}
Put $N=n-m$.
Amalgamate the new vertices of the given embedding into $\alpha$,
keeping the old vertices and edges fixed, to obtain $\cH$.
Every edge contains at most $N$ occurrences of $\alpha$.
Apply Lemma~\ref{CCAII:mainthconfdet} and split $\alpha$ into $N$ vertices.
The resulting hypergraph $\cF'$ preserves the old edges and the number of added edges of each type in every color.
Since $\deg_{H_i}(\alpha)=Ns_i$, degree fairness gives degree $s_i$
at every new vertex.
For an old $(h-t)$-set $S$,
$\mult_{\cH}(S\alpha^t)=\lambda\binom Nt$, so multiplicity fairness
restores multiplicity $\lambda$ on every added edge.
Old-vertex degrees are unchanged.
Hence $\cF'$ is an embedding with the same target degrees.

We check components in three cases.
\begin{enumerate}[label=\textup{(\roman*)},leftmargin=1.7em,labelsep=.35em,itemsep=3pt,topsep=2pt]
\item Suppose $i\in[q]$ and $s_i>r_i$, or
$i\in[k]\setminus[q]$ and $s_i\ge2$.
Every old vertex is incident with an added edge of color $i$,
so $H_i$ is connected.

Every $\alpha$-wing has degree at least two at $\alpha$.
A wing with no old vertex contains an $\alpha^h$-edge.
For a source color, suppose a wing had only one occurrence of $\alpha$,
on an edge $e$.
Choose an old vertex $v\in e$ and an old edge $f$ containing $v$.
The wing condition forces $f$ into the wing.
Since $f$ has $h$ old vertices and $e$ has at most $h-1$,
choose $w\in f\setminus e$.
An added edge through $w$ is also forced into the wing and gives
a second occurrence of $\alpha$, a contradiction.
For an additional target color, every edge through an old vertex is added;
a wing containing that vertex therefore has degree at least $s_i\ge2$
at $\alpha$.
Since the wing degrees sum to $Ns_i$,
\[
 \deg_{H_i}(\alpha)-\omega_\alpha(H_i)
 \ge Ns_i/2\ge N\ge N-1.
\]
The connectivity condition gives $c(F_i')=1$.

\item Suppose $i\in[q]$ and $s_i=r_i$.
No added edge of color $i$ meets an old vertex.
Thus $H_i$ consists of $G_i$ and $Ns_i/h$ copies of $\alpha^h$.
The new vertices have positive degree, so existence of the given
embedding forces $N\ge h$ in this case.
If $s_i\ge2$, then
$\omega_\alpha(H_i)=Ns_i/h$ and
$\deg_{H_i}(\alpha)-\omega_\alpha(H_i)
=Ns_i(1-1/h)\ge N-1$.
The new part can therefore be made connected, giving $c(G_i)+1$
components.
If $s_i=1$, the new part is a $1$-factor and has $N/h$ components.

\item Suppose $i\in[k]\setminus[q]$ and $s_i=1$.
The target factor is a $1$-factor on all $n$ vertices,
so it has $n/h$ components.
\end{enumerate}

These counts are minimum.
One component is the absolute lower bound.
When $s_i=r_i$, the old components and the nonempty new part must
remain separate; when the degree is one, every edge is a component.
The single detachment attains all these bounds.

For a fixed color, an old set $S$, and $t\in[\tau]$, the added
edges with old set $S$ and $t$ new vertices have fixed total
multiplicity.
The detachment makes their multiplicities the floor or ceiling of
their average.
If their original multiplicities lie in $[L_i,U_i]$, these rounded
averages lie in the same integer interval.
Old edges are unchanged, proving the bound assertion.

For the convex assertion, if integer multiplicities $u,v$ satisfy
$u\ge v+2$, convexity gives
\[
 \varphi(u-1)+\varphi(v+1)\le\varphi(u)+\varphi(v).
\]
Repeatedly making this change ends in a list whose entries differ
by at most one: the sum of squares strictly decreases at each change.
Every such list with the fixed total has the same convex sum.
Fair detachment produces this distribution in every class just
described, simultaneously in all colors.
Adding the comparisons over the classes, with the old edges fixed,
proves the convex inequality for every $\varphi$ for the same $\cF'$.
Taking $L_i$ and $U_i$ to be the original extrema proves
Theorem~\ref{CCAII:thm:main}.
\end{proof}

\subsection{Added-edge counts and multiplicity spread}

In color $i$, put $e_i=(ns_i-mr_i)/h$, the required number of added edges.
This is an integer by admissibility and the regularity of $G_i$.
Let $x_{it}\in\mathbb N\cup\{0\}$ count the type-$t$ edges in color $i$.
Write $\mathscr X$ for the set of matrices
$X=(x_{it})_{i\in[k],\,t\in[\tau]}$ satisfying
\begin{equation}\label{CCAII:eq:count-system}
 \begin{cases}
  \displaystyle\sum\nolimits_{i\in[k]}x_{it}=\lambda C_t
      &\text{for }t\in[\tau],\\[3pt]
  \displaystyle\sum\nolimits_{t\in[\tau]}x_{it}=e_i
      &\text{for }i\in[k],\\[3pt]
  \displaystyle\sum\nolimits_{t\in[\tau]}t x_{it}=(n-m)s_i
      &\text{for }i\in[k].
 \end{cases}
\end{equation}
The three lines count edge copies of each type, added edges in each
color, and incidences with new vertices.

Put $g_i^-=\min\nolimits_{e\in E(K_m^h)}\mult_{G_i}(e)$ and
$g_i^+=\max\nolimits_{e\in E(K_m^h)}\mult_{G_i}(e)$.
For additional target colors, both are zero.
For $X\in\mathscr X$, define
\[
 \Delta_i(X)=
 \max\left\{g_i^+,\max\nolimits_{t\in[\tau]}
          \left\lceil\frac{x_{it}}{C_t}\right\rceil\right\}
 -
 \min\left\{g_i^-,\min\nolimits_{t\in[\tau]}
          \left\lfloor\frac{x_{it}}{C_t}\right\rfloor\right\}.
\]
For fixed counts, the old extrema and these type averages force
$\Delta_i(X)$ as a lower bound for the spread in color $i$.

\begin{theorem}[Criterion from added-edge counts]\label{CCAII:thm:new-edge-count-criterion}
An embedding exists if and only if admissibility holds and
$\mathscr X\ne\varnothing$.
Whenever these conditions hold, every $X\in\mathscr X$ is realized by an embedding satisfying
\[
 \operatorname{spr}(F_i)=\Delta_i(X),
 \qquad
 c(F_i)=\kappa_i
 \quad\text{for }i\in[k].
\]
Every embedding with added-edge counts $X$ has spread at least
$\Delta_i(X)$ in color $i$.
\end{theorem}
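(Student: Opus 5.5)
Necessity is immediate: an embedding is admissible, as observed in Section~1, and its added-edge counts $x_{it}$ satisfy \eqref{CCAII:eq:count-system}. The first line counts added edge copies of each type, since every underlying type-$t$ edge has multiplicity $\lambda$ in $\lambda K_n^h$. The second line compares degree sums $ns_i$ and $mr_i$ in color $i$. The third line counts incidences with the $n-m$ new vertices, each of degree $s_i$.

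The lower bound on spread is equally direct. Fix an embedding with counts $X$. The old edges in color $i$ are exactly $G_i$, so they attain multiplicities $g_i^+$ and $g_i^-$. The $C_t$ underlying type-$t$ edges carry total multiplicity $x_{it}$ in color $i$. Hence some type-$t$ edge has multiplicity at least $\lceil x_{it}/C_t\rceil$, and some has at most $\lfloor x_{it}/C_t\rfloor$. Therefore $\operatorname{spr}(F_i)\ge\Delta_i(X)$.

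For sufficiency, fix $X\in\mathscr X$ and build an amalgamated colored hypergraph $\cH$. Its vertex set is the $m$ old vertices together with one vertex $\alpha$. Color $i$ contains $G_i$ on the old vertices. It also contains edges of the form $S\alpha^t$ with $|S|=h-t$, in total $x_{it}$ copies for each $t\in[\tau]$. The difficulty is to distribute these $x_{it}$ copies among the $\binom m{h-t}$ old sets $S$ so that three requirements hold:
\begin{enumerate}[label=\textup{(\alph*)},leftmargin=1.7em,labelsep=.35em,itemsep=2pt,topsep=2pt]
\item the uncolored multiplicity of $S\alpha^t$ is exactly $\lambda\binom{n-m}t$;
\item each old vertex $v$ receives degree $s_i-r_i$ in color $i$ from the edges through $\alpha$;
\item the color-$i$ multiplicity of $S\alpha^t$ is approximately $x_{it}/\binom m{h-t}$.
\end{enumerate}
Requirement (c) makes the later multiplicity fairness give $\approx x_{it}/C_t$ on each underlying type-$t$ edge.

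This is exactly the nonuniform factorization problem on the old vertex set. The edges $S$ have sizes $h-t$; type $t=h$ gives the empty edge, which is handled separately as $x_{ih}$ copies of $\alpha^h$. For $t<h$, the multiplicities are $\lambda_t=\lambda\binom{n-m}t$ and the color counts are $a_{it}=x_{it}$. The regularity is $d_i=s_i-r_i$, since each copy of $S\alpha^t$ contributes the edge $S$ of size $h-t$ to the old vertices. I apply Theorem~\ref{CCAII:thm:balanced-R} with these parameters. Its hypotheses are the identities
\[
\sum_t(h-t)x_{it}=m(s_i-r_i),
\qquad
\sum_i x_{it}=\lambda\binom{n-m}t\binom m{h-t}.
\]
The first follows from the second and third lines of \eqref{CCAII:eq:count-system}: $he_i-(n-m)s_i=ns_i-mr_i-(n-m)s_i=m(s_i-r_i)$. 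The second is the first line. The equimultiplicity conclusion of Theorem~\ref{CCAII:thm:balanced-R} gives requirement (c).

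There are two degenerate cases to watch. When $m=h-t$, the edge sizes collapse, and Theorem~\ref{CCAII:thm:balanced-R} allows sizes up to $n$, so this is fine. When $s_i=r_i$, we have $d_i=0$; the equation then forces $x_{it}=0$ for $t<h$, which is consistent.

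Next, apply Lemma~\ref{CCAII:mainthconfdet} to split $\alpha$ into $n-m$ new vertices. Every edge contains $\alpha$ at most $\tau\le n-m$ times, so the lemma applies. Degree fairness gives each new vertex degree $s_i$, because $\deg_{H_i}(\alpha)=(n-m)s_i$ by the third line of \eqref{CCAII:eq:count-system}. Uncolored multiplicity fairness gives multiplicity exactly $\lambda$ on every added edge. Colored multiplicity fairness gives each type-$t$ edge color-$i$ multiplicity within the floor and ceiling of $\mult_{H_i}(S\alpha^t)/\binom{n-m}t$. That quantity is itself a floor or ceiling of $x_{it}/\binom m{h-t}$ divided by $\binom{n-m}t$. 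By the nested floor/ceiling identities of Section~\ref{CCAII:sec:preliminaries}, it lies in $[\lfloor x_{it}/C_t\rfloor,\lceil x_{it}/C_t\rceil]$. Combined with the old edges, this gives spread exactly $\Delta_i(X)$.

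Finally, apply Lemma~\ref{CCAII:lem:component-optimal} to reach $c(F_i)=\kappa_i$. That lemma preserves the type counts. Its convex and bound clauses, or directly the fact that it rebalances within classes, keep each type-$t$ multiplicity in the same floor/ceiling interval. Hence the spread remains $\Delta_i(X)$ and does not exceed the lower bound.

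The main obstacle is the rounding step in requirement (c). The per-set color multiplicities must compose with the second fair detachment to land in $[\lfloor x_{it}/C_t\rfloor,\lceil x_{it}/C_t\rceil]$. The nested rounding identities are what make this work. The $t=h$ bookkeeping also needs care, since those edges have no old part to distribute.
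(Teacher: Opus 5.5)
Your proposal is correct and follows the paper's proof: you use Theorem~\ref{CCAII:thm:balanced-R} on $\mathcal Q$ to distribute the mixed edges, then perform one fair detachment of $\alpha$ and apply the nested rounding identities. The only difference is the component step. The paper observes that the constructed $\cH$ already satisfies the hypotheses of the connectivity analysis in Lemma~\ref{CCAII:lem:component-optimal}, so a single detachment suffices; you instead apply that lemma afterwards as a black box, combining its count-preservation and bound clauses with your lower bound to keep the spread at $\Delta_i(X)$, which is equally valid.
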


\begin{proof}
An embedding determines the counts in \eqref{CCAII:eq:count-system}
and satisfies admissibility.

Conversely, assume admissibility and fix $X\in\mathscr X$.
For $i\in[k]$, the equations give
\[
 \sum\nolimits_{t\in[\tau]}(h-t)x_{it}
 =he_i-(n-m)s_i=m(s_i-r_i).
\]
On the old vertices, form the nonuniform hypergraph
\[
 \mathcal Q=\bigcup\nolimits_{t\in[\tau],\,t<h}
             \lambda\binom{n-m}t K_m^{h-t}.
\]
Theorem~\ref{CCAII:thm:balanced-R} gives a factorization
$\{R_1,\dots,R_k\}$ in which $R_i$ has degree $s_i-r_i$,
has $x_{it}$ edges of size $h-t$ for $t<h$, and is equimultiple
within each size.
The preceding identity is precisely its incidence equation, while
$\sum\nolimits_{i\in[k]}x_{it}=\lambda C_t$ gives the required edge totals.
Degrees $s_i-r_i=0$ are allowed by that theorem.
The term $t=h$, when present, has no old incidences and is handled separately.

Retain every old edge of $G_i$, replace each size-$(h-t)$ edge
$S$ of $R_i$ by $S\alpha^t$, and add $x_{ih}$ copies of $\alpha^h$
when $\tau=h$.
Call the resulting colored hypergraph $\cH$.
Each old vertex has added-edge degree $s_i-r_i$ in color $i$,
and $\deg_{H_i}(\alpha)=(n-m)s_i$.
Since $t\le\tau\le n-m$, Lemma~\ref{CCAII:mainthconfdet} applies.

Split $\alpha$ into the $n-m$ new vertices.
The uncolored multiplicity of $S\alpha^t$ is
$\lambda\binom{n-m}t$, so multiplicity fairness restores
$\lambda$ copies of every added edge.
Degree fairness gives new-vertex degree $s_i$, while old-vertex
degrees are $r_i+(s_i-r_i)=s_i$.
Old edges remain fixed, and the added-edge counts remain $X$.
Hence the resulting $\cF$ is an embedding.

This detachment also has $c(F_i)=\kappa_i$.
Its amalgamated color classes have the prescribed old edges, added-edge
degree $s_i-r_i$ at each old vertex, and degree $(n-m)s_i$ at $\alpha$,
exactly the properties used in the component analysis of
Lemma~\ref{CCAII:lem:component-optimal}.
If $s_i=r_i>0$, there are no mixed edges in that color, so positive degree at $\alpha$
forces all-new edges to be present; the degree-one case is forced.
Hence the component analysis in Lemma~\ref{CCAII:lem:component-optimal}
applies directly to this $\cH$.

Put $P_t=\binom m{h-t}$ and $Q_t=\binom{n-m}t$, so
$C_t=P_tQ_t$ and both denominators are positive.
Before detachment, the multiplicity of $S\alpha^t$ in color $i$
is either $\lfloor x_{it}/P_t\rfloor$ or $\lceil x_{it}/P_t\rceil$.
For $t=h$, this is simply $x_{ih}$, with $P_h=1$ and $S=\varnothing$.
After detachment, the nested floor--ceiling identities give every
type-$t$ multiplicity between
$\lfloor x_{it}/C_t\rfloor$ and $\lceil x_{it}/C_t\rceil$.
If the average is not an integer, both endpoints occur because
the $C_t$ multiplicities sum to $x_{it}$; if it is an integer,
every multiplicity equals it.
Together with the fixed old extrema, this proves
$\operatorname{spr}(F_i)=\Delta_i(X)$.

For any embedding with the same counts, the type-$t$ multiplicities
still sum to $x_{it}$.
At least one is at most $\lfloor x_{it}/C_t\rfloor$ and at least one
is at least $\lceil x_{it}/C_t\rceil$.
The old extrema are also fixed, so its spread is at least $\Delta_i(X)$.
\end{proof}

Assume $\mathscr E\ne\varnothing$.
Both $\mathscr E$ and $\mathscr X$ are finite; in particular,
$0\le x_{it}\le\lambda C_t$.
For $i\in[k]$, the individual minimum spread is
\[
 \sigma_i=
 \min\nolimits_{\cF\in\mathscr E}\operatorname{spr}(F_i)
 =
 \min\nolimits_{X\in\mathscr X}\Delta_i(X).
\]
The values $\sigma_i$ need not be attained simultaneously, since
the column-sum equations link the colors.
If $\Phi:(\mathbb N\cup\{0\})^k\to\mathbb R$ is nondecreasing
in each coordinate, then
\[
 \min\nolimits_{\cF\in\mathscr E}
   \Phi\bigl(\operatorname{spr}(F_1),\dots,\operatorname{spr}(F_k)\bigr)
 =
 \min\nolimits_{X\in\mathscr X}
   \Phi\bigl(\Delta_1(X),\dots,\Delta_k(X)\bigr),
\]
and one minimizing embedding has $c(F_i)=\kappa_i$ for every color.
This includes the maximum over any nonempty set of colors and
every nonnegative weighted sum.
Necessity follows from the lower bounds $\Delta_i(X)$, and
sufficiency from their simultaneous attainment in
Theorem~\ref{CCAII:thm:new-edge-count-criterion}.

For prescribed $d_1,\dots,d_k\in\mathbb N\cup\{0\}$, an embedding
with $\operatorname{spr}(F_i)\le d_i$ for $i\in[k]$ exists exactly
when admissibility holds and some $X\in\mathscr X$ satisfies
$\Delta_i(X)\le d_i$ for $i\in[k]$.
Theorem~\ref{CCAII:thm:new-edge-count-criterion} gives such an embedding
while minimizing all component counts.

Existence therefore depends on the source degrees, while the spread
formulas use the old multiplicities only through $g_i^-$ and $g_i^+$.
For $s_i=r_i$, the component minimum also uses $c(G_i)$.

For fixed $X$, the construction minimizes every convex sum as well.
Write $x_{it}=C_ta_{it}+\rho_{it}$ with
$a_{it},\rho_{it}\in\mathbb N\cup\{0\}$ and $0\le\rho_{it}<C_t$.
The balancing argument in Lemma~\ref{CCAII:lem:component-optimal}
shows that the minimum convex sum in color $i$ is
\[
 \sum\nolimits_{e\in E(K_m^h)}
       \varphi\bigl(\mult_{G_i}(e)\bigr)
 +
 \sum\nolimits_{t\in[\tau]}
       \bigl((C_t-\rho_{it})\varphi(a_{it})
                   +\rho_{it}\varphi(a_{it}+1)\bigr).
\]
Theorem~\ref{CCAII:thm:new-edge-count-criterion} attains this value
for every color and every convex $\varphi:\mathbb R\to\mathbb R$
in the same embedding, with all component counts minimum.

\subsection{Equimultiple embeddings}

\begin{corollary}\label{CCAII:cor:equimultiple-new-edge-count}
Let $I\subseteq[k]$.
An embedding with equimultiple $F_i$ for $i\in I$ exists if and
only if admissibility holds and some $X\in\mathscr X$ satisfies
$\Delta_i(X)\le1$ for $i\in I$.
Equivalently, the last conditions are
\[
 \mult_{G_i}(e)\approx s_i/B
       \quad\text{for }i\in I,\ e\in E(K_m^h),
 \qquad
 x_{it}/C_t\approx s_i/B
       \quad\text{for }i\in I,\ t\in[\tau].
\]
Whenever these conditions hold, the embedding may also be chosen
with $c(F_i)=\kappa_i$ for $i\in[k]$.
\end{corollary}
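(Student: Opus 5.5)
The plan is to read the corollary off Theorem~\ref{CCAII:thm:new-edge-count-criterion}; the only real work is translating $\Delta_i(X)\le1$ into the stated $\approx$ conditions.

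\emph{Reduction to spreads.} A factor $F_i$ of $\lambda K_n^h$ is equimultiple exactly when $\operatorname{spr}(F_i)\le1$, since the spread is taken over all $h$-sets, including those of multiplicity zero. If an embedding has equimultiple $F_i$ for $i\in I$, let $X$ be its added-edge count matrix. The lower bound in Theorem~\ref{CCAII:thm:new-edge-count-criterion} gives $\Delta_i(X)\le\operatorname{spr}(F_i)\le1$ for $i\in I$. Conversely, if admissibility holds and some $X\in\mathscr X$ has $\Delta_i(X)\le1$ on $I$, then the same theorem realizes $X$ by an embedding with $\operatorname{spr}(F_i)=\Delta_i(X)\le1$ and $c(F_i)=\kappa_i$ for every $i\in[k]$. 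This settles the first equivalence and the component clause together.

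\emph{Translation of $\Delta_i(X)\le1$.} Fix $i\in I$. The mean multiplicity of $F_i$ over all $\binom nh$ underlying edges is
\[
 \frac{ns_i/h}{\binom nh}=\frac{s_i}{B}.
\]
The multiset of multiplicities of $F_i$ lies in an interval of integers of length $\Delta_i(X)$.

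Suppose $\Delta_i(X)\le1$, so the values lie in $\{c,c+1\}$ for some integer $c$. Their average $s_i/B$ then lies in $[c,c+1]$. Every old multiplicity lies in $\{c,c+1\}$, hence $\approx s_i/B$. Each average $x_{it}/C_t$ lies between the type-$t$ floor and ceiling, so it too lies in $[c,c+1]$.

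The one subtlety is the boundary case $s_i/B=c$ exactly. Then every value equals $c$, so no $c+1$ can occur and both conditions hold trivially. The case $s_i/B=c+1$ is symmetric. If $c<s_i/B<c+1$, then $\lfloor s_i/B\rfloor=c$ and $\lceil s_i/B\rceil=c+1$, and everything is fine.

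Conversely, suppose every $\mult_{G_i}(e)$ and every $x_{it}/C_t$ is $\approx s_i/B$. If $s_i/B$ is an integer, these force all of these values to equal $s_i/B$, and then $\Delta_i(X)=0$. Otherwise, $\lceil x_{it}/C_t\rceil\le\lceil s_i/B\rceil$ and $\lfloor x_{it}/C_t\rfloor\ge\lfloor s_i/B\rfloor$, so every term in the definition of $\Delta_i(X)$ lies in $[\lfloor s_i/B\rfloor,\lceil s_i/B\rceil]$, giving $\Delta_i(X)\le1$.

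The main obstacle is this averaging step, particularly the integral case, where the fact that the values average exactly to $s_i/B$ rules out the other endpoint.
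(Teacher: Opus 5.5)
Your proposal is correct and follows the paper's proof. Both reduce to Theorem~\ref{CCAII:thm:new-edge-count-criterion}: its lower bound gives necessity and its realization (spread $\Delta_i(X)$, components $\kappa_i$) gives sufficiency; both then use that the multiplicities of an $s_i$-factor average to $s_i/B$, which places the old multiplicities and the type averages in $[\lfloor s_i/B\rfloor,\lceil s_i/B\rceil]$, and conversely. Your explicit handling of the case where $s_i/B$ is an integer spells out a step the paper leaves implicit.
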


\begin{proof}
The first criterion is the spread-bound criterion with bound one.
The average underlying-edge multiplicity of an $s_i$-factor is
$(ns_i/h)/\binom nh=s_i/B$.
Integer multiplicities of spread at most one must therefore lie
between $\lfloor s_i/B\rfloor$ and $\lceil s_i/B\rceil$.
This gives the conditions on the old edges and on each type average.

Conversely, these conditions put the old multiplicities in that
integer interval.
For each type, the floor and ceiling of $x_{it}/C_t$ also lie
in the interval.
Theorem~\ref{CCAII:thm:new-edge-count-criterion} gives an embedding with these
multiplicities and all component minima together.
If $s_i/B$ is an integer, the interval has one value, so every
multiplicity in that color is constant.
\end{proof}

\section{Exact criteria for \texorpdfstring{$h=2,3$}{h=2,3}}\label{CCAII:sec:h23-main}

\subsection{The graph case}

For $h=2$, the second and third equations in \eqref{CCAII:eq:count-system}
force all added-edge counts.  Put $d=n-m$.  If $d=1$, set
$a_{i1}=s_i/m$.  If $d\ge2$, set
\[
 a_{i1}=\frac{s_i-r_i}{d},\qquad
 a_{i2}=\frac{mr_i-(2m-n)s_i}{d(d-1)}.
\]

\begin{theorem}\label{CCAII:embedrmsn2}
An $\vb r$-factorization of $\lambda K_m$ embeds in an
$\vb s$-factorization of $\lambda K_n$ satisfying $c(F_i)=\kappa_i$ and
\[
 \operatorname{spr}(F_i)=\sigma_i=
 \begin{cases}
  \max\{g_i^+,\lceil a_{i1}\rceil\}
  -\min\{g_i^-,\lfloor a_{i1}\rfloor\},&d=1,\\[4pt]
  \max\{g_i^+,\lceil a_{i1}\rceil,\lceil a_{i2}\rceil\}
  -\min\{g_i^-,\lfloor a_{i1}\rfloor,\lfloor a_{i2}\rfloor\},&d\ge2,
 \end{cases}
 \qquad i\in[k],
\]
if and only if admissibility holds and $mr_i\ge(2m-n)s_i$ for $i\in[k]$.
\end{theorem}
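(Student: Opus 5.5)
The plan is to reduce everything to Theorem~\ref{CCAII:thm:new-edge-count-criterion} by showing that, for $h=2$, the system \eqref{CCAII:eq:count-system} has at most one solution. When one exists, its entries are $x_{it}=C_ta_{it}$. Then $\mathscr X$ is either empty or the singleton $\{X\}$, and this gives both conclusions. First, $\sigma_i=\min_{X\in\mathscr X}\Delta_i(X)=\Delta_i(X)$. Second, the embedding supplied by that theorem attains $\operatorname{spr}(F_i)=\Delta_i(X)$ and $c(F_i)=\kappa_i$ simultaneously. It remains to substitute $C_1=m(n-m)=md$ and $C_2=\binom d2$, and check that $x_{it}/C_t=a_{it}$, which turns $\Delta_i(X)$ into the displayed formula.

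\emph{Solving the system.} For $h=2$, $e_i=(ns_i-mr_i)/2$ and $\tau=\min\{2,d\}$.
\begin{itemize}
\item If $d=1$, then $\tau=1$. The incidence equation gives $x_{i1}=s_i$, and the count equation $x_{i1}=e_i$ is consistent because $n=m+1$. We get $x_{i1}/C_1=s_i/m=a_{i1}$.
\item If $d\ge2$, the two row equations $x_{i1}+x_{i2}=e_i$ and $x_{i1}+2x_{i2}=ds_i$ determine the row uniquely: $x_{i2}=ds_i-e_i=(mr_i-(2m-n)s_i)/2$ and $x_{i1}=2e_i-ds_i=m(s_i-r_i)$. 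Dividing by $C_1=md$ and $C_2=d(d-1)/2$ gives $a_{i1}$ and $a_{i2}$ as defined.
\end{itemize}
These values are integers whenever admissibility holds.

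\emph{Existence of a solution.} It remains to show that the column equations hold automatically and that nonnegativity is exactly the stated inequality.

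For the columns, I sum over $i$. I use $\sum_i s_i=\lambda(n-1)$ (admissibility) and $\sum_i r_i=\lambda(m-1)$, which holds because $\cG$ is a factorization of $\lambda K_m$ and the added colors have $r_i=0$. This gives
\[
\sum_i x_{i1}=m\lambda(n-m)=\lambda C_1.
\]
The $t=2$ column then follows from $\sum_i x_{i2}=\sum_i e_i-\sum_i x_{i1}$ and the edge count $\lambda\binom n2-\lambda\binom m2$. When $d=1$, the single column equation $\sum_i s_i=\lambda m=\lambda C_1$ is the same computation.

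For nonnegativity:
\begin{itemize}
\item $x_{i1}\ge0$ holds by the standing assumption $r_i\le s_i$.
\item $x_{i2}\ge0$ is equivalent to $mr_i\ge(2m-n)s_i$.
\item When $d=1$, we have $2m-n=m-1$. The inequality $mr_i\ge(m-1)s_i$ follows automatically: when $d=1$ the new vertex is joined only to old vertices, so $s_i=mx_{i1}/C_1\cdot$, and summing the constraint with the degree condition $r_i\le s_i$ and the column identity gives it. I expect to need to verify that this inequality is indeed implied, or else harmlessly restrictive, in this case.
\end{itemize}

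Hence $\mathscr X\ne\varnothing$ if and only if admissibility holds and the inequality holds for all $i$. Theorem~\ref{CCAII:thm:new-edge-count-criterion} then gives both directions of the equivalence. Necessity also follows directly, since any embedding determines a nonnegative solution of \eqref{CCAII:eq:count-system}.

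\emph{Main obstacle.} I expect the only delicate points to be the $d=1$ case of the inequality and confirming that the column sums close up. Both are pure bookkeeping with $\sum r_i$ and $\sum s_i$.
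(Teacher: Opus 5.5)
Your treatment of $d\ge2$ is correct and is exactly the paper's argument. The two row equations force $x_{i1}=m(s_i-r_i)$ and $x_{i2}=(mr_i-(2m-n)s_i)/2$. The column totals follow from $\sum_i r_i=\lambda(m-1)$ and $\sum_i s_i=\lambda(n-1)$, and $x_{i2}\ge0$ is the stated inequality. Uniqueness of $X$ then lets Theorem~\ref{CCAII:thm:new-edge-count-criterion} deliver $\operatorname{spr}(F_i)=\sigma_i$ and $c(F_i)=\kappa_i$ simultaneously.

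The case $d=1$, however, rests on two false claims.

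\emph{Consistency is not automatic.} When $n=m+1$ each row has one unknown but two equations: $x_{i1}=e_i=((m+1)s_i-mr_i)/2$ and $x_{i1}=s_i$. These agree only when $mr_i=(m-1)s_i$, which is not implied by $n=m+1$.

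\emph{The inequality is not automatic.} Admissibility does not imply $mr_i\ge(m-1)s_i$. Take $m=4$, $n=5$, $\lambda=1$, with $K_4$ split into a $4$-cycle and a perfect matching, so $\vb r=(2,1)$ and $\vb s=(2,2)$. This is admissible. Yet $e_2=3\ne2=s_2$, and indeed no embedding exists: each old vertex needs its one extra color-$2$ edge to go to the new vertex, which would then have color-$2$ degree $4$. Color $1$ in this example satisfies $mr_1\ge(m-1)s_1$ strictly and is still inconsistent. So even the inequality in a single color does not give consistency in that color. If your two claims held, you would have shown that admissibility alone suffices when $d=1$, which is false.

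\emph{The missing step.} Use the inequality for all colors at once. Since
\[
m\sum_i r_i=\lambda m(m-1)=(m-1)\sum_i s_i,
\]
the inequalities $mr_i\ge(m-1)s_i$ for every $i$ force $mr_i=(m-1)s_i$ for every $i$. Under the hypothesis this gives consistency, so $x_{i1}=s_i=e_i$ is the unique solution, and its column total is $\lambda m=\lambda C_1$. Conversely, any solution forces these equalities and hence the inequalities. With this inserted, the $d=1$ case closes as in the paper.
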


\begin{proof}
\begin{enumerate}[label=\textup{(\roman*)},leftmargin=1.7em,labelsep=.35em,itemsep=3pt,topsep=2pt]
\item If $n=m+1$, the condition $mr_i\ge(2m-n)s_i$ becomes
$mr_i\ge(m-1)s_i$.  Summing over
the colors gives equality, since
$m\sum\nolimits_{i\in[k]}r_i=(m-1)\sum\nolimits_{i\in[k]}s_i=\lambda m(m-1)$.
Hence $mr_i=(m-1)s_i$ for every $i$, and
\[
 x_{i1}=s_i=\frac{ns_i-mr_i}{2}.
\]
Thus $x_{i1}$ is uniquely determined, and $C_1=m$, so
Theorem~\ref{CCAII:thm:new-edge-count-criterion} gives the stated value of
$\sigma_i$.

\item If $n\ge m+2$, types $1$ and $2$ occur, and the second and third equations in
\eqref{CCAII:eq:count-system} give uniquely
\[
 x_{i1}=m(s_i-r_i),\qquad
 x_{i2}=\frac{mr_i-(2m-n)s_i}{2}.
\]
The standing inequality $r_i\le s_i$ gives $x_{i1}\ge0$, while
$x_{i2}\ge0$ if and only if $mr_i\ge(2m-n)s_i$.  These quantities are
integers: $2\mid mr_i$ and $2\mid ns_i$, so
$mr_i-(2m-n)s_i=mr_i-2ms_i+ns_i$ is even.  Summing over the colors and using
$\sum\nolimits_{i\in[k]}r_i=\lambda(m-1)$ and
$\sum\nolimits_{i\in[k]}s_i=\lambda(n-1)$ gives the required type totals.
Since $X$ is unique, Theorem~\ref{CCAII:thm:new-edge-count-criterion} gives
$\operatorname{spr}(F_i)=\sigma_i$; substituting these two counts into
$\Delta_i(X)$ gives the stated formula.
\end{enumerate}
\end{proof}

\begin{corollary}\label{CCAII:cor:h2-equimultiple}
Let $I\subseteq[k]$.  An $\vb r$-factorization of $\lambda K_m$
embeds in an $\vb s$-factorization of $\lambda K_n$
in which $c(F_i)=\kappa_i$ and $\operatorname{spr}(F_i)=\sigma_i$ for
$i\in[k]$, and $F_i$ is equimultiple for $i\in I$, if and only if
admissibility holds and
\[
\begin{aligned}
 mr_i&\ge(2m-n)s_i
 &&\text{for }i\in[k],\\
 \mult_{G_i}(e)&\approx\frac{s_i}{n-1}
 &&\text{for }i\in I,\ e\in E(K_m),\\
 \frac{s_i-r_i}{n-m}&\approx\frac{s_i}{n-1},\qquad
 \frac{mr_i-(2m-n)s_i}{(n-m)(n-m-1)}\approx\frac{s_i}{n-1}
 &&\text{for }i\in I,\ n\ge m+2.
\end{aligned}
\]
\end{corollary}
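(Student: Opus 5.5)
The plan is to derive the corollary directly from Corollary~\ref{CCAII:cor:equimultiple-new-edge-count}, combined with the uniqueness of the counts established in the proof of Theorem~\ref{CCAII:embedrmsn2}.

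First, existence. Any embedding forces admissibility. By Theorem~\ref{CCAII:embedrmsn2}, every embedding also forces $mr_i\ge(2m-n)s_i$. Conversely, under these hypotheses the set $\mathscr X$ consists of exactly one matrix $X$, whose entries are given in that proof: $x_{i1}=s_i$ when $n=m+1$, and $x_{i1}=m(s_i-r_i)$, $x_{i2}=(mr_i-(2m-n)s_i)/2$ when $n\ge m+2$. Since $X$ is unique, every embedding has counts $X$. By Theorem~\ref{CCAII:thm:new-edge-count-criterion}, every embedding therefore has spread at least $\Delta_i(X)=\sigma_i$, and any embedding realizing $X$ attains $\operatorname{spr}(F_i)=\sigma_i$ and $c(F_i)=\kappa_i$ simultaneously.

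Second, equimultiplicity. By Corollary~\ref{CCAII:cor:equimultiple-new-edge-count}, an embedding with $F_i$ equimultiple for $i\in I$ exists exactly when $X$ satisfies $\mult_{G_i}(e)\approx s_i/B$ and $x_{it}/C_t\approx s_i/B$ for $i\in I$. That corollary also allows us to take $c(F_i)=\kappa_i$ for all $i$. We still need the spreads to equal $\sigma_i$ for every color. This holds because the embedding produced in the proof of Corollary~\ref{CCAII:cor:equimultiple-new-edge-count} comes from Theorem~\ref{CCAII:thm:new-edge-count-criterion} applied to the unique $X$, so $\operatorname{spr}(F_i)=\Delta_i(X)=\sigma_i$ for all $i\in[k]$. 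Necessity of the displayed conditions is immediate from that corollary.

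Third, translation into the stated inequalities. For $h=2$ we have $B=n-1$. When $n\ge m+2$, we have $C_1=m(n-m)$, so $x_{i1}/C_1=(s_i-r_i)/(n-m)$. We also have $C_2=\binom{n-m}2$, so
\[
 x_{i2}/C_2=\frac{mr_i-(2m-n)s_i}{(n-m)(n-m-1)}.
\]
These are exactly the displayed conditions.

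When $n=m+1$, the only type is $t=1$, with $C_1=m$ and $x_{i1}/C_1=s_i/m$. The main (minor) obstacle is checking that this type condition is automatic there and hence correctly omitted from the display. In this case $mr_i=(m-1)s_i$, so
\[
 \frac{s_i}{m}=\frac{r_i}{m-1}=\frac{s_i}{n-1}.
\]
The type average therefore equals $s_i/B$ exactly, and the condition $x_{i1}/C_1\approx s_i/B$ holds trivially. This completes the equivalence.
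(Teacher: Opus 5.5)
Your proposal is correct and follows the route the paper intends. The paper states this corollary without a separate proof, as the specialization of Corollary~\ref{CCAII:cor:equimultiple-new-edge-count} to the unique count matrix from Theorem~\ref{CCAII:embedrmsn2}, with the quotients $x_{it}/C_t$ becoming the two displayed expressions. Your check for $n=m+1$ is also right: there $C_1=m=n-1=B$, so the type-$1$ condition holds automatically, which explains why it is omitted from the display.
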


\begin{remark}
Let $I\subseteq[k]$, and let $i\in I\setminus[q]$ be an additional target color.
Suppose $n\ge m+2$.  The conditions in Corollary~\ref{CCAII:cor:h2-equimultiple}
involving color $i$ reduce to $n\ge2m$ and $s_i\le n-m$.
Indeed, for $r_i=0$ the existence condition $mr_i\ge(2m-n)s_i$ is exactly
$n\ge2m$.  Under this inequality, the old-edge and type-$1$ equimultiplicity
conditions together are equivalent
to $s_i\le n-m$, and the type-$2$ condition then follows.  Thus these are
exactly the conditions for a target color absent from the source; when
they hold, that equimultiple target factor is simple.
\end{remark}

\begin{remark}
In the classical simple-graph case $\lambda=1$ with constant source and target
degrees $r$ and $s$, we have $q=(m-1)/r$ and $k=(n-1)/s$.
Theorem~\ref{CCAII:embedrmsn2} reduces to the following: an embedding exists if
and only if admissibility holds and $k>q$ implies $n\ge2m$.
Indeed, when $k=q$, the relation $r(n-1)=s(m-1)$ makes the source-color
inequality automatic.  When $k>q$, applying the exact criterion to an
additional target color gives $n\ge2m$, which also makes the source-color
inequality automatic.  Whenever an embedding exists, all
target factors can be chosen connected if and only if $s>r$.  Since
$\lambda=1$, all factors are simple.  For $n\ge m+2$, this is the connected
embedding criterion of Hilton--Johnson--Rodger--Wantland~\cite{HJRW2003}.
\end{remark}

\subsection{The \texorpdfstring{$3$}{3}-uniform case}

For $h=3$, the count equations of Section~\ref{CCAII:sec:regular-embedding}
have only one degree of freedom in each color.  Put $d=n-m$ and
\[
 A=\binom{m-1}{2},\quad B=\binom{n-1}{2},\quad
 C_1=d\binom m2,\quad C_2=m\binom d2,\quad C_3=\binom d3.
\]
When $d\ge3$, put
\[
 \delta_i=s_i-r_i,\qquad
 \beta_i=\frac{(3m-n)s_i-2mr_i}{3},\qquad
 \ell_i=\max\{0,\beta_i\},\qquad
 u_i=\left\lfloor\frac{m\delta_i}{2}\right\rfloor
 \quad\text{for }i\in[k].
\]
The number $\beta_i$ is an integer because $3\mid mr_i$ and
$3\mid ns_i$.  If $x_i=x_{i1}$, the other two counts are
\[
 x_{i2}=m\delta_i-2x_i,\qquad x_{i3}=x_i-\beta_i.
\]
Thus the embedding problem reduces to choosing one integer from a prescribed
interval in each color with the required sum.  For
$x\in[\ell_i,u_i]\cap\mathbb Z$, define
\[
\begin{aligned}
 \Delta_i(x)
 &=\max\left\{g_i^+,\left\lceil\frac{x}{C_1}\right\rceil,
 \left\lceil\frac{m\delta_i-2x}{C_2}\right\rceil,
 \left\lceil\frac{x-\beta_i}{C_3}\right\rceil\right\}\\
 &\quad-\min\left\{g_i^-,\left\lfloor\frac{x}{C_1}\right\rfloor,
 \left\lfloor\frac{m\delta_i-2x}{C_2}\right\rfloor,
 \left\lfloor\frac{x-\beta_i}{C_3}\right\rfloor\right\}.
\end{aligned}
\]
\begin{theorem}\label{CCAII:thm:h3-exact}
An $\vb r$-factorization of $\lambda K_m^3$ embeds in an
$\vb s$-factorization of $\lambda K_n^3$ satisfying
$c(F_i)=\kappa_i$ for $i\in[k]$ if and only if admissibility holds and
\[
\begin{aligned}
 mr_i&\ge(3m-2n)s_i
 &&\text{for }i\in[k],\\
 mr_i&\le(m-1)s_i
 &&\text{for }i\in[k],\ n=m+2,\\
 \sum\nolimits_{i\in[k]}\ell_i&\le\lambda C_1
 &&\text{for }n\ge m+3.
\end{aligned}
\]
If $n\in\{m+1,m+2\}$, such an embedding can be chosen with
$\operatorname{spr}(F_i)=\sigma_i$ for $i\in[k]$.  If $n\ge m+3$, then
for every choice of integers $x_1,\dots,x_k$ satisfying
\[
 \ell_i\le x_i\le u_i\quad\text{for }i\in[k],\qquad
 \sum\nolimits_{i\in[k]}x_i=\lambda C_1,
\]
there is such an embedding satisfying
$\operatorname{spr}(F_i)=\Delta_i(x_i)$ for $i\in[k]$.
\end{theorem}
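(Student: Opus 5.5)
By Theorem~\ref{CCAII:thm:new-edge-count-criterion}, the whole statement reduces to describing the set $\mathscr X$ for $h=3$. An embedding with $c(F_i)=\kappa_i$ exists exactly when admissibility holds and $\mathscr X\ne\varnothing$. Every $X\in\mathscr X$ is realized with $\operatorname{spr}(F_i)=\Delta_i(X)$, and $\sigma_i=\min_X\Delta_i(X)$. So I will solve the system \eqref{CCAII:eq:count-system} explicitly for $\tau=\min\{3,d\}$, splitting into the cases $d=1$, $d=2$, and $d\ge3$. In each case I will check that nonnegativity, integrality, and the column sums are equivalent to the displayed inequalities.

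\textbf{Case $d=1$.} Only type $1$ occurs, so $x_{i1}=e_i=(ns_i-mr_i)/3$ and $x_{i1}=s_i$. Hence $(m+1)s_i-mr_i=3s_i$, that is, $mr_i=(m-2)s_i=(3m-2n)s_i$. Summing $mr_i\ge(3m-2n)s_i$ over the colors gives equality, because $m\sum r_i=\lambda m\binom{m-1}2$ equals $(m-2)\sum s_i=(m-2)\lambda\binom m2$. So the inequality forces equality in every color. The column sum $\sum_i s_i=\lambda\binom m2=\lambda C_1$ is then automatic. $X$ is unique, so $\Delta_i(X)=\sigma_i$.

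\textbf{Case $d=2$.} Types $1$ and $2$ occur. Solving $x_{i1}+x_{i2}=e_i$ and $x_{i1}+2x_{i2}=2s_i$ gives
\[
x_{i2}=2s_i-e_i=\frac{mr_i-(m-4)s_i}{3}=\frac{mr_i-(3m-2n)s_i}{3}
\]
and
\[
x_{i1}=2e_i-2s_i=\frac{2((m-1)s_i-mr_i)}{3}.
\]
Nonnegativity gives exactly the first two conditions. Integrality follows from $3\mid mr_i$ and $3\mid ns_i$. The column sums follow by summing with $\sum r_i=\lambda A$ and $\sum s_i=\lambda B$, which is a routine identity. Again $X$ is unique, giving $\sigma_i$.

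\textbf{Case $d\ge3$.} The formulas in the statement come from eliminating $x_{i2},x_{i3}$ with $x_i=x_{i1}$. The third equation gives $x_{i1}+2x_{i2}+3x_{i3}=ds_i$. Combining it with the other two yields $2x_{i1}+x_{i2}=m\delta_i$, matching the identity $\sum(h-t)x_{it}=m(s_i-r_i)$ from the proof of Theorem~\ref{CCAII:thm:new-edge-count-criterion}. Then $x_{i3}=x_i-\beta_i$. Nonnegativity becomes $\ell_i\le x_i\le u_i$; since $x_{i2}=m\delta_i-2x_i$ is an integer, the upper bound is $\lfloor m\delta_i/2\rfloor$. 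I will then verify that the single column condition $\sum_i x_i=\lambda C_1$ implies the other two. Summing the row identities gives $2\sum x_{i1}+\sum x_{i2}=m\lambda(B-A)$, and $2C_1+C_2=m(B-A)$ by Pascal-type identities. Similarly for type $3$ one needs $\sum_i\beta_i=\lambda(C_1-C_3)$, which I will check from admissibility sums. Consequently $\mathscr X\ne\varnothing$ if and only if there exist integers $x_i\in[\ell_i,u_i]$ with $\sum x_i=\lambda C_1$. By discrete intermediate values this holds if and only if $\ell_i\le u_i$ for all $i$ and $\sum\ell_i\le\lambda C_1\le\sum u_i$.

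\textbf{Main obstacle.} The hard part is showing that, when $d\ge3$, the conditions $\ell_i\le u_i$ and $\sum u_i\ge\lambda C_1$ are implied by admissibility, $r_i\le s_i$, and $mr_i\ge(3m-2n)s_i$.
\begin{itemize}
\item First, $\beta_i\le m\delta_i/2$ rearranges to $(3m-n)s_i-2mr_i\le\tfrac32 m(s_i-r_i)$, that is, $(3m-2n)s_i\le mr_i$, which is exactly the first hypothesis. Integrality of $\beta_i$ then gives $\beta_i\le u_i$, and $0\le u_i$ holds trivially.
\item For the upper sum, I will first try a constructive argument rather than direct algebra. Take $x_i=u_i$, so that $x_{i2}\in\{0,1\}$. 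Then $\sum x_{i2}\le\#\{i: m\delta_i\text{ odd}\}$, while $\sum x_{i2}$ must equal $\lambda C_2$. So it suffices to compare $\sum(m\delta_i-2u_i)$ with $\lambda C_2$ through the identity $\sum m\delta_i=2\lambda C_1+\lambda C_2$.
\end{itemize}
Thus $\sum u_i\ge\lambda C_1$ is equivalent to the number of colors with $m\delta_i$ odd being at most $\lambda C_2$. I expect this to follow from $k\le\lambda B$ and $C_2\ge m\ge$ the relevant count. This inequality is the step I anticipate needing the most care.

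Finally, Theorem~\ref{CCAII:thm:new-edge-count-criterion} supplies the spread $\Delta_i(x_i)$ together with the component counts $c(F_i)=\kappa_i$.
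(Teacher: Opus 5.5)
Your treatment of $d=1$ and $d=2$ follows the paper's proof. So does your reduction for $d\ge3$ to choosing integers $x_i\in[\ell_i,u_i]$ with $\sum_i x_i=\lambda C_1$, including the equivalence of $\ell_i\le u_i$ with $mr_i\ge(3m-2n)s_i$ and the derivation of the type-$2$ and type-$3$ column sums.

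The gap is the step you flagged. For the ``if'' direction when $n\ge m+3$, you must show that $\sum_i u_i\ge\lambda C_1$ holds automatically, because the theorem's hypotheses include only $\sum_i\ell_i\le\lambda C_1$. Your suggested justification does not establish this. Your reformulation as $\#\{i:m\delta_i\text{ odd}\}\le\lambda C_2$, via $\sum_i m\delta_i=2\lambda C_1+\lambda C_2$, is correct, but your proposed bounds do not give it.
\begin{itemize}
\item Bounding the count by $k\le\lambda B$ fails, because $B$ can exceed $C_2$: for $m=5$, $n=8$ one has $B=21>C_2=15$.
\item Bounding the count by $m$ also fails. For $m=5$, $n=10$, $\lambda=1$, $\mathbf r=(3,3)$, and $\mathbf s=(6,6,3,\dots,3)$ with eight entries equal to $3$, the data are admissible and all ten colors have $m\delta_i=15$ odd.
\end{itemize}

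The missing idea is to bound the count by $\sum_i\delta_i$ rather than by $k$ or $m$. A color with $m\delta_i$ odd has $\delta_i$ odd, hence $\delta_i\ge1$. So $m\delta_i-2u_i\le\delta_i$ in every color, and summing gives $\sum_i(m\delta_i-2u_i)\le\lambda(B-A)$. It then remains to note $B-A\le C_2$. This is where $d\ge3$ enters, since $C_2-(B-A)=d(m-1)(d-3)/2$.

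This is the paper's argument in another form. The paper uses $u_i=\lfloor m\delta_i/2\rfloor\ge(m-1)\delta_i/2$, which is the same per-color inequality, and sums to get $\sum_i u_i\ge\lambda C_1+\lambda d(m-1)(d-3)/4\ge\lambda C_1$. With this inserted, your proof is complete.
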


\begin{proof}
\begin{enumerate}[label=\textup{(\roman*)},leftmargin=1.7em,labelsep=.35em,itemsep=3pt,topsep=2pt]
\item If $n=m+1$, the condition $mr_i\ge(3m-2n)s_i$ becomes
$mr_i\ge(m-2)s_i$.  Summing over
the colors gives equality, since
$m\sum\nolimits_{i\in[k]}r_i=(m-2)\sum\nolimits_{i\in[k]}s_i$.
Thus $mr_i=(m-2)s_i$ for every $i$, and
$x_{i1}=s_i=(ns_i-mr_i)/3$.  Admissibility gives
$\sum\nolimits_{i\in[k]}x_{i1}=\lambda C_1$.  Thus $x_{i1}$ is uniquely determined,
so Theorem~\ref{CCAII:thm:new-edge-count-criterion} gives an embedding with
$\operatorname{spr}(F_i)=\sigma_i$ and $c(F_i)=\kappa_i$ for every $i$.

\item If $n=m+2$, the two inequalities in the theorem become
$(m-4)s_i\le mr_i\le(m-1)s_i$.  The second and third equations in
\eqref{CCAII:eq:count-system} uniquely determine
\[
 x_{i1}=\frac{2((m-1)s_i-mr_i)}3,
 \qquad
 x_{i2}=\frac{mr_i-(m-4)s_i}{3}.
\]
These two counts are nonnegative exactly under these inequalities.  Both quantities are integers, since $3\mid mr_i$ and
$3\mid(m+2)s_i$, while $m-1\equiv m-4\equiv m+2\pmod3$.
Summing these formulas and using $\sum\nolimits_{i\in[k]}r_i=\lambda A$ and
$\sum\nolimits_{i\in[k]}s_i=\lambda B$ yields the required type totals.  Theorem~\ref{CCAII:thm:new-edge-count-criterion}
therefore applies, and the resulting embedding has
$\operatorname{spr}(F_i)=\sigma_i$ and $c(F_i)=\kappa_i$ for every $i$.

\item If $n\ge m+3$, put $x=x_{i1}$.  The second and third equations in \eqref{CCAII:eq:count-system} force
\[
 x_{i2}=m(s_i-r_i)-2x,
 \qquad
 x_{i3}=x+\frac{(n-3m)s_i+2mr_i}{3}.
\]
Thus the nonnegative counts are exactly those obtained by choosing an integer
$x\in[\ell_i,u_i]$.  This interval is nonempty exactly when
\[
 mr_i\ge(3m-2n)s_i.
\]
Indeed, when $\ell_i=0$ this inequality follows from
$(3m-n)s_i\le2mr_i$, since $n>m$; when $\ell_i>0$,
the inequality $\ell_i\le u_i$ simplifies to precisely
$mr_i\ge(3m-2n)s_i$.
Because the intervals have integer endpoints, there are integers $x_i$ with
$\ell_i\le x_i\le u_i$ and $\sum\nolimits_{i\in[k]}x_i=\lambda C_1$ if and only if
\[
 \sum\nolimits_{i\in[k]}\ell_i\le\lambda C_1\le\sum\nolimits_{i\in[k]}u_i.
\]
The upper inequality is automatic.  Put $\delta_i=s_i-r_i$.  Since $\delta_i\in\mathbb N\cup\{0\}$,
\[
 u_i=\left\lfloor\frac{m\delta_i}{2}\right\rfloor
 \ge\frac{(m-1)\delta_i}{2}.
\]
Admissibility and the source factorization give
$\sum\nolimits_{i\in[k]}\delta_i=\lambda(B-A)$.  Since $n-m\ge3$,
\[
 \sum\nolimits_{i\in[k]} u_i
 \ge\lambda\frac{m-1}{2}(B-A)
 =\lambda C_1+\frac{\lambda(n-m)(m-1)(n-m-3)}4
 \ge\lambda C_1.
\]
Thus only the lower inequality is needed.  Once
$\sum\nolimits_{i\in[k]}x_{i1}=\lambda C_1$ holds, summing the second and third
equations in \eqref{CCAII:eq:count-system} over the colors forces
$\sum\nolimits_{i\in[k]}x_{i2}=\lambda C_2$ and
$\sum\nolimits_{i\in[k]}x_{i3}=\lambda C_3$.  These choices give every possible set of counts.  Theorem~\ref{CCAII:thm:new-edge-count-criterion}
then gives the stated spread and component counts.
\end{enumerate}
\end{proof}

\begin{corollary}\label{CCAII:cor:h23-large-order}
Suppose $h\in\{2,3\}$ and $n\ge2m$.  An $\vb r$-factorization of
$\lambda K_m^h$ embeds in an $\vb s$-factorization of $\lambda K_n^h$
satisfying $c(F_i)=\kappa_i$ for $i\in[k]$ if and only if admissibility
holds.  Whenever admissibility holds, every $X\in\mathscr X$ is realized by
such an embedding with $\operatorname{spr}(F_i)=\Delta_i(X)$ for $i\in[k]$.
\end{corollary}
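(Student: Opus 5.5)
The plan is to derive the corollary from the exact criteria already proved, Theorem~\ref{CCAII:embedrmsn2} for $h=2$ and Theorem~\ref{CCAII:thm:h3-exact} for $h=3$. I will show that when $n\ge2m$ every extra inequality in those criteria follows from admissibility. Once an embedding exists, $\mathscr X\ne\varnothing$, and the final sentence (every $X\in\mathscr X$ is realized with $\operatorname{spr}(F_i)=\Delta_i(X)$ and $c(F_i)=\kappa_i$) is exactly Theorem~\ref{CCAII:thm:new-edge-count-criterion}. Necessity of admissibility is immediate, so only sufficiency needs work.

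\emph{Case $h=2$.} Since $m\ge h=2$ and $n\ge2m$, we have $n\ge m+2$. The only extra condition is $mr_i\ge(2m-n)s_i$. Its right side is at most $0$ because $2m-n\le0$, and $r_i\ge0$, so it holds automatically.

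\emph{Case $h=3$.} Here $m\ge3$ gives $n\ge2m\ge m+3$, so we are in the third regime of Theorem~\ref{CCAII:thm:h3-exact}. The condition $mr_i\ge(3m-2n)s_i$ is automatic, since $3m-2n\le -m<0$. The substantive step is the inequality $\sum_i\ell_i\le\lambda C_1$. I expect this to be the main obstacle, because the crude bound $\ell_i\le(3m-n)^+s_i/3$ is too weak near $n=2m$.

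Instead I will bound each $\ell_i$ by a multiple of $\delta_i=s_i-r_i$, using $\sum_i\delta_i=\lambda(B-A)$ and
\[
B-A=\tfrac{(n-m)(n+m-3)}2 .
\]
Put $\theta=m(m-1)/(n+m-3)$, so that $\theta(B-A)=C_1$. I claim $\ell_i\le\theta\delta_i$ for every color.

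\begin{itemize}
\item If $\ell_i=0$, the claim is trivial.
\item Otherwise $\ell_i=\beta_i=\bigl((3m-n)s_i-2mr_i\bigr)/3$. Here the difference $3\theta\delta_i-3\beta_i$ is linear in $r_i\in[0,s_i]$, so it suffices to check the endpoints.
\begin{itemize}
\item At $r_i=s_i$, the inequality reads $(m-n)s_i\le0$, which holds.
\item At $r_i=0$, it reads $3m-n\le3\theta$, that is,
\[
f(n):=(3m-n)(n+m-3)\le3m(m-1).
\]
At $n=2m$ this holds with equality. Since $f'(n)=2m+3-2n<0$ for $n\ge2m\ge6$, $f$ is decreasing on $n\ge2m$, so the inequality holds throughout.
\end{itemize}
\end{itemize}

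Summing over the colors gives $\sum_i\ell_i\le\theta\lambda(B-A)=\lambda C_1$, as required. (For $n\ge3m$ the bound is even simpler, since then $\ell_i=0$.)

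Thus in both cases Theorem~\ref{CCAII:embedrmsn2} or Theorem~\ref{CCAII:thm:h3-exact} supplies an embedding with $c(F_i)=\kappa_i$. Hence $\mathscr X\ne\varnothing$, and Theorem~\ref{CCAII:thm:new-edge-count-criterion} yields the realization statement for every $X\in\mathscr X$.
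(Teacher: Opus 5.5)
Your proposal is correct and follows essentially the paper's route: reduce to Theorems~\ref{CCAII:embedrmsn2} and~\ref{CCAII:thm:h3-exact}, note the extra inequalities are automatic for $n\ge2m$, and for $h=3$ bound $\sum_i\ell_i$ by a multiple of $\sum_i\delta_i=\lambda(B-A)$. Your endpoint-and-monotonicity check reaches the same key inequality $(3m-n)(B-A)\le3C_1$ that the paper gets more directly: it writes $\beta_i=((2m-d)\delta_i-dr_i)/3$, drops the $-dr_i$ term, and uses the identity $6C_1-2(2m-d)(B-A)=d(d-m)(d+m-3)\ge0$.
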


\begin{proof}
For $h=2$, Theorem~\ref{CCAII:embedrmsn2} applies with $n\ge m+2$, and
$mr_i\ge(2m-n)s_i$ is automatic when $n\ge2m$.

For $h=3$, put $d=n-m\ge m$.  Then $n\ge m+3$, so the last case of
Theorem~\ref{CCAII:thm:h3-exact} applies.  The inequality
$mr_i\ge(3m-2n)s_i$ is automatic because $3m-2n=m-2d\le0$.  Put $\delta_i=s_i-r_i$.  Since
$B-A=d(2m+d-3)/2$ and $C_1=d\binom m2$, we have
\[
 \ell_i
 =\max\left\{0,\frac{(2m-d)\delta_i-dr_i}{3}\right\}.
\]
If $d\ge2m$, then $\ell_i=0$ for $i\in[k]$.  If $m\le d<2m$, then
\[
 \sum\nolimits_{i\in[k]}\ell_i
 \le \frac{2m-d}{3}\sum\nolimits_{i\in[k]}\delta_i
 =\lambda\frac{2m-d}{3}(B-A)
 \le\lambda C_1,
\]
because
\[
 6C_1-2(2m-d)(B-A)=d(d-m)(d+m-3)\ge0.
\]
Thus the condition $\sum\nolimits_{i\in[k]}\ell_i\le\lambda C_1$ in
Theorem~\ref{CCAII:thm:h3-exact} is automatic, and the result follows.
\end{proof}

Unlike the graph case, the values $\sigma_i$ need not be simultaneously
attainable when $h=3$.  For example, take
$m=6$, $n=10$, $\lambda=4$, and an equimultiple source factorization with
\[
 \vb r=(10,15,15)^{\mathsf T},\qquad
 \vb s=(75,51,18)^{\mathsf T};
\]
such a source factorization exists by
Theorem~\ref{CCAII:thm:balanced-R}.  Here $(C_1,C_2,C_3)=(60,36,4)$,
and the minimum and maximum old-edge multiplicities in the three colors are
$(1,1)$, $(1,2)$, and $(1,2)$, respectively.  Writing $x_i=x_{i1}$, the
counts are
\[
\begin{aligned}
 (x_{11},x_{12},x_{13})&=(x_1,390-2x_1,x_1-160),\\
 (x_{21},x_{22},x_{23})&=(x_2,216-2x_2,x_2-76),\\
 (x_{31},x_{32},x_{33})&=(x_3,18-2x_3,x_3+12),
\end{aligned}
\]
where $160\le x_1\le195$, $76\le x_2\le108$, and $0\le x_3\le9$.
Substitution in $\Delta_i$ gives
$(\sigma_1,\sigma_2,\sigma_3)=(2,1,3)$, attained respectively when
\[
 164\le x_1\le172,\qquad 80\le x_2\le84,\qquad x_3=0.
\]
Each coordinate minimum occurs in some embedding with the required
type-$1$ total: for instance, $(x_1,x_2,x_3)=(164,76,0)$ attains the minima
in colors $1$ and $3$, while $(160,80,0)$ attains the minimum in color $2$.  However,
$\lambda C_1=240$, whereas simultaneous attainment of all three minima would
require
\[
 x_1+x_2+x_3\ge164+80=244.
\]
Hence no single embedding attains all three coordinatewise minima; the
colors must be optimized jointly in Theorem~\ref{CCAII:thm:h3-exact}.

Equimultiplicity only narrows these intervals.  Let
$I\subseteq[k]$.  An embedding with $c(F_i)=\kappa_i$ for $i\in[k]$ and
with $F_i$ equimultiple for $i\in I$ exists exactly when the conditions of
Theorem~\ref{CCAII:thm:h3-exact} hold and the counts can also be chosen
so that these colors have spread at most one.  Thus, if
$n\in\{m+1,m+2\}$, the unique $X$ must satisfy $\Delta_i(X)\le1$ for
$i\in I$.  If $d\ge3$, there must be integers $x_1,\dots,x_k$ with the
required type-$1$ total and with $\Delta_i(x_i)\le1$ for $i\in I$.
This is exactly Corollary~\ref{CCAII:cor:equimultiple-new-edge-count}
applied to the count description in Theorem~\ref{CCAII:thm:h3-exact}.

To make the integer endpoints explicit, put
$a_i=\lfloor s_i/B\rfloor$ and $b_i=\lceil s_i/B\rceil$ for $i\in I$.
After requiring the prescribed old-edge multiplicities in color $i$ to lie
in $[a_i,b_i]$, equimultiplicity is equivalent to
$L_i\le x_i\le U_i$, where
\[
 L_i=\max\left\{a_iC_1,
 \left\lceil\frac{m\delta_i-b_iC_2}{2}\right\rceil,
 a_iC_3+\beta_i\right\},
\]
\[
 U_i=\min\left\{b_iC_1,
 \left\lfloor\frac{m\delta_i-a_iC_2}{2}\right\rfloor,
 b_iC_3+\beta_i\right\}.
\]
Indeed, these are exactly the bounds $a_iC_t\le x_{it}\le b_iC_t$
after substituting the three counts above; they already imply
nonnegativity, so separate copies of $\ell_i$ and $u_i$ are redundant.
For $i\notin I$, put $L_i=\ell_i$ and $U_i=u_i$.  The simultaneous
condition is precisely
\[
 L_i\le U_i\quad\text{for }i\in[k],\qquad
 \sum\nolimits_{i\in[k]}L_i\le\lambda C_1
 \le\sum\nolimits_{i\in[k]}U_i.
\]
Because all endpoints are integers, these inequalities are also sufficient:
choose $x_i\in[L_i,U_i]\cap\mathbb Z$ with the required sum.  The count
equations then force the remaining column totals.

\subsection{Constant degrees}
Put
\[
 D=B-A,\qquad q=\lambda A/r,\qquad k=\lambda B/s.
\]
The following lemma gives the simple-target criterion used for
equimultiplicity.

\begin{lemma}\label{CCAII:lem:scalar-simple-counts}
Suppose a simple $r$-factorization of $\lambda K_m^3$ embeds in an
$s$-factorization of $\lambda K_n^3$, where $s<B$.  Then it has an
embedding with all target factors simple if and only if, when $k>q$,
\[
 s\le D,\qquad (3m-n)s\le3C_1.
\]
\end{lemma}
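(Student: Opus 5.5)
Throughout, simplicity of a target factor means spread conditions with upper bound $U_i=1$ and lower bound $L_i=0$ on every added edge; the old edges are already simple. By Lemma~\ref{CCAII:lem:component-optimal} and Theorem~\ref{CCAII:thm:new-edge-count-criterion}, an embedding with all target factors simple exists if and only if some $X\in\mathscr X$ satisfies $x_{it}\le C_t$ for all $i,t$. So the whole problem reduces to the count system \eqref{CCAII:eq:count-system}, with two classes of colors. There are $q$ source colors with $(r_i,s_i)=(r,s)$, and $k-q$ additional colors with $(r_i,s_i)=(0,s)$. I will work with the one-parameter description of Theorem~\ref{CCAII:thm:h3-exact}.

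\emph{Necessity.} Suppose $k>q$ and fix an additional color $i$. Every edge of $F_i$ through an old vertex $v$ is added, and there are exactly $B-A=D$ underlying added edges through $v$. Simplicity then gives $s\le D$. Next, $r_i=0$ gives $\beta_i=(3m-n)s/3$. Nonnegativity of $x_{i3}=x_i-\beta_i$ together with $x_{i1}=x_i\le C_1$ forces $\beta_i\le C_1$, which is $(3m-n)s\le3C_1$. I will also treat the cases $n\in\{m+1,m+2\}$, where there is no free parameter. For $n=m+1$ the relation $mr_i=(m-2)s_i$ forces $r_i>0$, so $k=q$. For $n=m+2$, an additional color requires $m\le4$, and the unique counts $x_{i1},x_{i2}$ can be checked directly against $C_1,C_2$.

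\emph{Sufficiency, for $n\ge m+3$.} For each color put
\[
L_i=\max\{\ell_i,\lceil(m\delta_i-C_2)/2\rceil\},\qquad U_i=\min\{u_i,C_1,C_3+\beta_i\}.
\]
These are exactly the bounds that encode $0\le x_{it}\le C_t$. Since $L_i,U_i$ depend only on the class of the color, we get one pair $(L',U')$ for source colors and one pair $(L'',U'')$ for additional colors. As the endpoints are integers, a valid choice exists if and only if
\[
L'\le U',\qquad L''\le U''\ \text{(when $k>q$)},\qquad qL'+(k-q)L''\le\lambda C_1\le qU'+(k-q)U''.
\]

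To verify these inequalities, I would avoid bounding each term separately and instead exhibit a real feasible point. The natural candidate is the ``proportional'' fractional solution $x_{it}=C_t\,\mu_{it}$ with $\mu_{it}\in[0,1]$. For additional colors I would take $\mu_{it}=s/D$ for all $t$; the identity $\sum_t C_t\cdot(\text{old incidences})=mD$ makes this consistent with the incidence equations. For source colors, the $\mu_{it}$ are then determined by the column sums, using $\sum_i s_i=\lambda B$ and $\sum_i r_i=\lambda A$. The hypotheses $s\le D$ and $(3m-n)s\le3C_1$, together with $s<B$ (which keeps source colors from being saturated), should give $0\le\mu_{it}\le1$. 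Because the constraint region for each $x_i$ is an interval, the existence of this real point inside the product of the intervals with the correct sum implies the integer conditions above.

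If the proportional point fails to be feasible, my fallback is to write the interval inequalities explicitly. I would then use $\sum_i\delta_i=\lambda D$ and the identity $6C_1-2(2m-d)D=d(d-m)(d+m-3)$ from Corollary~\ref{CCAII:cor:h23-large-order}, in the same style. The main obstacle is this verification step: confirming $L'\le U'$ for the source colors, and confirming that the type-$3$ upper bound $C_3+\beta_i$ does not conflict with the type-$2$ lower bound when $s$ is close to $D$.
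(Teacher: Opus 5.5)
Your reduction to finding $X\in\mathscr X$ with $x_{it}\le C_t$, your necessity argument, and the intervals $[L_i,U_i]$ with the passage from a real feasible point to integers all agree with the paper. The gap is in the verification, which is the whole of sufficiency, and your candidate point fails it.

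As stated, the point is not a solution of \eqref{CCAII:eq:count-system}. In an additional color, $x_{it}=C_ts/D$ for every $t$ satisfies the old-incidence equation, because $\sum_t(3-t)C_t=mD$. However, $\sum_t tC_t=dB$ gives $\sum_t tx_{it}=dBs/D>ds$. In the one-parameter description you may prescribe only $x_{i1}$. Taking $x_{i1}=\delta_iC_1/D$ does make $x_{i2}=\delta_iC_2/D$, but then, for $r_i=0$,
\[
 x_{i3}=x_{i1}-\beta_i=\frac{sK}{3D},\qquad
 K=3C_1-(3m-n)D=\frac{d(d-m)(d+m-3)}2 .
\]
This corrected point is exactly the paper's choice when $k=q$ or $d\ge m$. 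But $K<0$ whenever $d<m$, so every additional color gets a negative type-$3$ count throughout $3m/2\le n<2m$.

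This range really occurs. Take $m=6$, $n=10$, $\lambda=1$, $r=5$ (a simple source exists by Theorem~\ref{CCAII:thm:balanced-R}) and $s=9$. Then $k=4>q=2$, the existence conditions of Theorem~\ref{CCAII:thm:h3-scalar} hold, and $s\le D=26$ and $(3m-n)s=72\le3C_1=180$. Yet $sC_1/D=540/26<24=\beta_0$. Your fallback identity $6C_1-2(2m-d)D=d(d-m)(d+m-3)$ is negative exactly in this range, so it cannot close the case ``in the same style.''

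What is missing is a different feasible point for $k>q$ and $m/2\le d<m$. The paper gives all type-$3$ edges to the source colors: $y_0=\beta_0$ and $y_r=\beta_r+rC_3/A$. Their total is $\lambda C_1$ because $q\beta_r+(k-q)\beta_0=\lambda(C_1-C_3)$ and $qr/A=\lambda$; in the example, $y_r=6$ and $y_0=24$ work.

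Checking $y_r\in[L_r,U_r]$ is the delicate part. It uses:
\begin{itemize}
\item $r\le A$ from simplicity, and $r\ge sA/B$ from $q\le k$;
\item the hypotheses $s\le D$ and $(3m-n)s\le3C_1$;
\item the estimate $P-\gamma\ge4$, for $P=2d-m$ and $\gamma=6C_3/A-m$, to get $y_r\le\lfloor m(s-r)/2\rfloor$;
\item a case split on the sign of $\gamma$ (and on $P=0$) to get $\lceil(m(s-r)-C_2)/2\rceil\le y_r$.
\end{itemize}

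Two further steps are also left undone. In the proportional cases you still need the rounding inequality $\lceil(mz-C_2)/2\rceil\le zC_1/D\le\lfloor mz/2\rfloor$, for integers $0\le z\le D$, to control the type-$2$ bounds. And the cases $d\le2$ must actually be checked, not just announced. For $k=q$ the forced counts are $sC_t/B<C_t$. For $k>q$, only $d=2$ with $m\in\{3,4\}$ remains.
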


\begin{proof}
Simplicity gives $1\le r\le A$, so $q\ge\lambda$; also $q\le k$ gives
$rB\ge sA$.  In an additional target color, each old vertex needs $s$ incident added
edges, but only $D$ are available, so $s\le D$.  The count equations give
$x_{i1}\ge(3m-n)s/3$ in such a color, so $x_{i1}\le C_1$ gives the
second inequality.  This proves necessity for $k>q$.

For sufficiency, first suppose $d\le2$.  If $k=q$, the counts are
forced and equal in every color, so $x_{it}=sC_t/B<C_t$.  If $k>q$,
existence excludes $d=1$.  For $d=2$, the condition $2n\ge3m$ forces
$m=3$ or $4$.  When $m=3$, simplicity and divisibility give $r=1$ and
$(3m-n)s\le3C_1$ gives $s=3$; the source and additional count rows are
$(2,2)$ and $(4,1)$, within $(C_1,C_2)=(6,3)$.  When $m=4$, simplicity
and divisibility give $r=3$, while existence and the assumed bounds give
$4\le s\le6$; the rows are $(2s-8,4)$ and $(2s,0)$, within
$(C_1,C_2)=(12,4)$.

Now suppose $d\ge3$.  For $r_i\in\{r,0\}$, put
$\delta_i=s-r_i$ and $\beta_i=((3m-n)s-2mr_i)/3\in\mathbb Z$.  The bounds $0\le x_{it}\le C_t$
correspond exactly to integers $x_i=x_{i1}$ in
\[
 L_i=\max\left\{0,\beta_i,
 \left\lceil\frac{m\delta_i-C_2}{2}\right\rceil\right\},\qquad
 U_i=\min\left\{C_1,\left\lfloor\frac{m\delta_i}{2}\right\rfloor,
 C_3+\beta_i\right\}.
\]
It is enough to find real numbers $y_i\in[L_i,U_i]$ with
$\sum\nolimits_{i\in[k]}y_i=\lambda C_1$.  Then
$\sum\nolimits_{i\in[k]}L_i\le\lambda C_1\le\sum\nolimits_{i\in[k]}U_i$, so the integer intervals contain integers $x_i$ with the
required sum.

We use $mD=2C_1+C_2$ and
$C_2-D=d(m-1)(d-3)/2\ge0$.  For every integer $0\le z\le D$,
\[
 \left\lceil\frac{mz-C_2}{2}\right\rceil
 \le\frac{zC_1}{D}\le
 \left\lfloor\frac{mz}{2}\right\rfloor.
\]
The gap on the left before rounding is $C_2(D-z)/(2D)$, at least $1/2$
unless $z=D$; the gap on the right is $zC_2/(2D)$, at least $1/2$ unless
$z=0$.  Since the expressions being rounded are integers or half-integers,
and the endpoint equalities are exact, the displayed inequalities follow.

\begin{enumerate}[label=\textup{(\roman*)},leftmargin=1.7em,labelsep=.35em,itemsep=3pt,topsep=2pt]
\item Suppose $k=q$.  Put $\alpha=s/B=r/A$, and choose
$y_i=\alpha C_1$ in every color.  Here $\delta_i=\alpha D$ and
$\beta_i=\alpha(C_1-C_3)$, since
$(3m-n)B-2mA=3(C_1-C_3)$.  Thus
$\beta_i\le y_i\le C_3+\beta_i$, and the other bounds follow from the
rounding inequality.  Finally $k\alpha C_1=\lambda C_1$.

\item Suppose $k>q$ and $d\ge m$.  The hypotheses give $s\le D$ and
$\beta_0=(3m-n)s/3\le C_1$.  Choose $y_i=\delta_iC_1/D$.  Put
\[
 K=3C_1-(3m-n)D=\frac{d(d-m)(d+m-3)}2=3C_3-dA\ge0.
\]
Direct subtraction gives
\[
 y_i-\beta_i=\frac{K\delta_i/D+dr_i}{3}\ge0,\qquad
 C_3+\beta_i-y_i=
 \frac{K(1-\delta_i/D)+d(A-r_i)}3\ge0.
\]
The remaining bounds follow from the rounding inequality, and
$\sum\nolimits_{i\in[k]}\delta_i=\lambda D$ gives the required total.

\item Suppose $k>q$ and $m/2\le d<m$.  The hypotheses give
$s\le D$ and $\beta_0=(3m-n)s/3\le C_1$, while existence gives
$2d\ge m$.  Write $\beta_r$ for a source
color and $\beta_0$ for an additional target color, and put $P=2d-m\ge0$.
Choose
\[
 y_r=\beta_r+\frac{rC_3}{A},\qquad y_0=\beta_0.
\]
These values have the required total, since
\[
 q\beta_r+(k-q)\beta_0=\lambda(C_1-C_3),\qquad qr/A=\lambda.
\]
Moreover, existence gives $(k-q)\beta_0\le\lambda C_1$, which is
equivalent to $y_r\ge0$ by the preceding identities.

For an additional target color, $PD\le3C_2$ and $s\le D$ give
$\lceil(ms-C_2)/2\rceil\le\beta_0$, so $L_0=\beta_0=y_0$.
Also $y_0\le C_1$, $ms/2-\beta_0=Ps/6\ge0$, and
$C_3+\beta_0\ge\beta_0$.  Hence $y_0\in[L_0,U_0]$.

For a source color, $y_r\ge\beta_r$ and
$y_r\le\beta_r+C_3$ because $r\le A$.  Put
$\gamma=6C_3/A-m$.  Since $d\le m-1$,
\[
 P-\gamma=4+
 \frac{2(d-2)((m-1)(m-2)-d(d-1))}{(m-1)(m-2)}\ge4.
\]
Thus
\[
 \frac{m(s-r)}2-y_r
 =\frac{P(s-r)+(P-\gamma)r}{6}\ge\frac23,
\]
so $y_r\le\lfloor m(s-r)/2\rfloor$.  Also
$y_r=\beta_0-r(2m/3-C_3/A)\le\beta_0\le C_1$, since
$C_3/A\le(m-3)/3$.

It remains only to check the lower ceiling.  The gap before rounding is
\[
 y_r-\frac{m(s-r)-C_2}{2}
 =\frac{3C_2-Ps+\gamma r}{6}.
\]
If $\gamma\ge0$, use $r\ge sA/B$ and $PB-\gamma A=3C_2$ to bound this
from below by $C_2(B-s)/(2B)\ge1/2$.  Here $B-s\ge A$ and
$C_2A\ge B$, because $C_2\ge D$, $A\ge3$, and $D\ge3m\ge12$.
If $\gamma<0$ and $P>0$, use $r\le A$, $s\le D$, and
$3C_2-PD+\gamma A=PA$ to obtain a gap at least $PA/6\ge1/2$.
If $P=0$, the gap is $(-\gamma)(A-r)/6$, at least $2/3$ when $r<A$;
when $r=A$, it is zero and $y_r=\beta_r+C_3$ is an integer.
Thus the ceiling is at most $y_r$, and $y_r\in[L_r,U_r]$.
\end{enumerate}

In every case, the chosen $y_i$ lie in the required intervals and sum to
$\lambda C_1$.  We may therefore choose integers $x_i$ in the same
intervals with this sum; the count equations force the other two columns.  Their entries are at most
$C_t$, so Theorem~\ref{CCAII:thm:new-edge-count-criterion} gives simple
target factors with these counts.
\end{proof}

\begin{theorem}\label{CCAII:thm:h3-scalar}
An $r$-factorization of $\lambda K_m^3$ embeds in an $s$-factorization of
$\lambda K_n^3$ if and only if admissibility holds and, when $k>q$,
\[
 2n\ge3m,\qquad
 3r(n-m)\binom m2\ge(3m-n)(rB-sA).
\]
Under these conditions, an equimultiple embedding exists if and only if
\[
 \mult_{G_i}(e)\approx s/B\quad(i\in[q],\ e\in E(K_m^3)),\qquad
 \begin{cases}
 (3m-n)s\le3(n-m)\binom m2,& n<2m,\\[3pt]
 s\le B-A,& n\ge2m
 \end{cases}
 \quad\text{if }k>q.
\]
\end{theorem}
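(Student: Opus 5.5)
We specialize Theorem~\ref{CCAII:thm:h3-exact} to $r_i=r$ for $i\in[q]$ and $r_i=0$ for $i\in[k]\setminus[q]$, with $s_i=s$ throughout. We use $q=\lambda A/r$ and $k-q=\lambda(rB-sA)/(rs)$; the latter is positive exactly when $k>q$.

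\emph{Case $k=q$.} Here $rB=sA$, so the counts are the same in every color. We check directly that the inequalities of Theorem~\ref{CCAII:thm:h3-exact} hold automatically: $mA\ge(3m-2n)B$, and for $n=m+2$ also $mA\le(m-1)B$. For $n\ge m+3$ the sum of the $\ell_i$ is at most $\lambda(C_1-C_3)$, by the computation given for the mixed case below.

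\emph{Case $k>q$, first inequality.} An additional color has $r_i=0$, so $mr_i\ge(3m-2n)s_i$ becomes $2n\ge3m$. This already excludes $n=m+1$. For $n=m+2$, the source inequalities $(m-4)s\le mr\le(m-1)s$ must still be checked. The upper one follows from $rB\ge sA$ together with the size restriction $m\le4$ that $2n\ge3m$ imposes. The lower one is the source version of the first displayed condition.

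\emph{Case $k>q$, $n\ge m+3$.} This is the heart of the necessity/sufficiency computation. Additional colors have $\ell_0=\max\{0,(3m-n)s/3\}$, and their total contribution is
\[
(k-q)\ell_0=\lambda(3m-n)(rB-sA)/(3r)\quad\text{when } n<3m.
\]
So $\sum_i\ell_i\le\lambda C_1$ restricted to these colors is exactly the displayed inequality $3r(n-m)\binom m2\ge(3m-n)(rB-sA)$. (For $n\ge3m$ that inequality is trivial, and all $\ell_i=0$.) Source colors have $\beta_r=((3m-n)s-2mr)/3$. If $\beta_r\le0$, their $\ell$ vanishes and we are done. If $\beta_r>0$, the full sum is
\[
q\beta_r+(k-q)\beta_0=\tfrac{\lambda}{3}\bigl((3m-n)B-2mA\bigr)=\lambda(C_1-C_3)\le\lambda C_1,
\]
so $\sum\ell_i\le\lambda C_1$ holds automatically. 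In that case the displayed inequality also holds, since it is dominated by this sum. Either way, the existence criterion is exactly the one stated.

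\emph{Equimultiple part.} We use Corollary~\ref{CCAII:cor:equimultiple-new-edge-count}, which forces $\mult_{G_i}(e)\approx s/B$ on old edges, and we reduce to Lemma~\ref{CCAII:lem:scalar-simple-counts}. Put $a=\lfloor s/B\rfloor$.
\begin{itemize}
\item If $B\mid s$, every factor must be $(s/B)K_n^3$. This exists exactly when the old multiplicities equal $s/B$, and then $k=q$.
\item Otherwise, an equimultiple embedding has all multiplicities in $\{a,a+1\}$. Removing $a\lambda'$ copies color by color, i.e.\ subtracting $aK_n^3$ from each factor and $aK_m^3$ from each $G_i$, gives a bijection with simple embeddings with degrees $s'=s-aB<B$ and $r'=r-aA$. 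The numbers $q,k$ are unchanged, and $\lambda$ is replaced by $\lambda-ka$.
\end{itemize}
The additional-color condition $s\le D$ forces $s<B$. Indeed, $a\ge1$ would give an additional color (old multiplicity $0$) whose target multiplicities lie in $\{a,a+1\}$, which is impossible. Hence $a=0$ whenever $k>q$, and Lemma~\ref{CCAII:lem:scalar-simple-counts} applies verbatim, giving the conditions $s\le D$ and $(3m-n)s\le3C_1$. It remains to show that for $n\ge2m$ the second follows from the first, using $3C_1-(3m-n)D=d(d-m)(d+m-3)/2\ge0$. For $n<2m$ the first follows from the second, because $3C_1/(3m-n)\le D$ when $d<m$ by the same identity.

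I expect the main obstacle to be the bookkeeping in the $\beta_r>0$ subcase, together with the $n=m+2$ source check. That is where one must confirm that no source-color constraint adds a condition beyond the displayed ones. I would verify it first via the identity $(3m-n)B-2mA=3(C_1-C_3)$.
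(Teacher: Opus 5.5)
Your overall plan is the paper's: specialize Theorem~\ref{CCAII:thm:h3-exact} to $r_i\in\{r,0\}$ and $s_i=s$, split into $k=q$ and $k>q$, and for the equimultiple part strip $aK_n^3$ and invoke Lemma~\ref{CCAII:lem:scalar-simple-counts}. Your $n\ge m+3$ bookkeeping, including the $\beta_r>0$ subcase, matches the paper. The gap is at $n=m+2$ with $k>q$.

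There you claim that $mr\le(m-1)s$ follows from $rB\ge sA$ together with $m\le4$. It cannot: $rB\ge sA$ bounds $s/r$ from above, whereas $mr\le(m-1)s$ bounds it from below. For a concrete failure, take $m=3$, $n=5$, $\lambda=3$, $r=s=3$. Then $q=1$ and $k=6$, admissibility and $2n\ge3m$ hold, and $rB=18\ge3=sA$. Yet $mr=9>6=(m-1)s$.

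No embedding exists in that example. The source color has $s=r$, so it can contain no mixed edge, and its two new vertices would need degree $3$ from triples lying inside a $2$-set.

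The correct assignment is the reverse of yours. The lower inequality $(m-4)s\le mr$ is the automatic one, from $m\le4$. The upper one is equivalent to the second displayed condition. With $n-m=2$ and $3m-n=2(m-1)$, that condition reads $3mr\ge2(rB-sA)$. Substituting $2B=m(m+1)$ and $2A=(m-1)(m-2)$ turns it into $(m-2)\bigl((m-1)s-mr\bigr)\ge0$. As written, your argument never uses the second displayed condition when $n=m+2$. So it proves neither necessity of that condition there nor a valid sufficiency; in effect it would show the condition is superfluous at $n=m+2$, which the example refutes.

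Two smaller points:
\begin{enumerate}
\item In the $k=q$ case, the sum of the $\ell_i$ equals $\lambda\max\{0,C_1-C_3\}$. It is not at most $\lambda(C_1-C_3)$, which can be negative; the bound $\lambda C_1$ is what you actually need.
\item In the $k=q$ equimultiple reduction, Lemma~\ref{CCAII:lem:scalar-simple-counts} presupposes that the reduced factorization embeds at all. The reduced data are degrees $r-aA$ and $s-aB$ with multiplicity $\lambda-ka$. Note that these degrees are admissible, since divisibility survives because $3\mid mA$ and $3\mid nB$. The $k=q$ existence case then supplies that embedding.
\end{enumerate}
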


Whenever the existence conditions hold, an embedding can be chosen with
$c(F_i)=\kappa_i$ for $i\in[k]$; all target factors can be chosen connected
if and only if $s>r$.  Under the equimultiplicity conditions, the target
factorization is simple if $k>q$.

\begin{proof}
Since $qr=\lambda A$ and $ks=\lambda B$, we have $k=q$ if and only if
$rB=sA$, while $k-q=\lambda(rB-sA)/(rs)$.  We first verify the existence criterion from
Theorem~\ref{CCAII:thm:h3-exact}.
\begin{enumerate}[label=\textup{(\roman*)},leftmargin=1.7em,labelsep=.35em,itemsep=3pt,topsep=2pt]
\item If $n=m+1$, then $2n<3m$, so an embedding requires $k=q$:
an additional target color would violate $mr_i\ge(m-2)s$ because $r_i=0$.
The identity $rB=sA$ is exactly $mr=(m-2)s$.

\item If $n=m+2$, Theorem~\ref{CCAII:thm:h3-exact} requires
$(m-4)s\le mr\le(m-1)s$.  For $k>q$, the condition $2n\ge3m$ is
equivalent to $m\le4$, so $(m-4)s\le mr$ is automatic, while
$3r(n-m)\binom m2\ge(3m-n)(rB-sA)$ is equivalent to
$mr\le(m-1)s$.  For $k=q$, the identity $rB=sA$ implies both inequalities.

\item Suppose $n\ge m+3$ and put $d=n-m$ and $\theta=3m-n$.
If $k>q$, an additional target color has $r_i=0$, so its interval is
nonempty exactly when $2n\ge3m$; under this inequality the source-color
intervals are automatically nonempty.  If $k=q$, the identity $rB=sA$
instead gives
\[
 mA-(3m-2n)B
 =\frac{d(d-1)(2d+3m-4)}2\ge0,
\]
so the source-color intervals are again nonempty.

It remains to check $\sum\nolimits_{i\in[k]}\ell_i\le\lambda C_1$.  If $\theta\le0$, all
$\ell_i$ vanish.  Suppose $\theta>0$.  When $k=q$, either $\ell_i=0$ for every source color, or
\[
 \sum\nolimits_{i\in[k]}\ell_i=\lambda\frac{\theta B-2mA}{3}
 \le\lambda C_1,
\]
because
\[
 3C_1-(\theta B-2mA)=\frac{d(d-1)(d-2)}2\ge0.
\]
Now suppose $k>q$.  Since $k-q=\lambda(rB-sA)/(rs)$, if $\ell_i=0$
for every source color, then
\[
 \sum\nolimits_{i\in[k]}\ell_i
 =\lambda\frac{\theta(rB-sA)}{3r},
\]
so $\sum\nolimits_{i\in[k]}\ell_i\le\lambda C_1$ is exactly
\[
 3r(n-m)\binom m2\ge(3m-n)(rB-sA).
\]
If $\ell_i>0$ for the source colors, then
\[
 \sum\nolimits_{i\in[k]}\ell_i=\lambda\frac{\theta B-2mA}{3}\le\lambda C_1
\]
by the same identity, and
$3r(n-m)\binom m2\ge(3m-n)(rB-sA)$ is automatic as well, since
$\theta s>2mr$ implies
\[
 \theta(rB-sA)<r(\theta B-2mA)\le3rC_1.
\]
Thus the interval nonemptiness condition is exactly $2n\ge3m$ when
$k>q$, and the remaining sum condition is exactly
$3r(n-m)\binom m2\ge(3m-n)(rB-sA)$.
\end{enumerate}
This proves the existence criterion.  The formula for $\kappa_i$ gives the
connectedness assertion.

For equimultiplicity, the condition on old multiplicities is necessary
because an embedding preserves the multiplicities on old edges.  If $k>q$,
an additional target color has multiplicity zero on every old edge, so every
equimultiple target factor is simple and $s<B$.
Lemma~\ref{CCAII:lem:scalar-simple-counts} then gives the necessary
inequalities $s\le B-A$ and $(3m-n)s\le3C_1$, and hence the stated
piecewise condition.

Conversely, assume the condition on old multiplicities and, when $k>q$, the
stated piecewise condition.  Suppose first that $k=q$, and put
$a=\lfloor s/B\rfloor$.  Remove $a$ copies of $K_m^3$ from each source
color.  The remaining factorization is simple in each color, with common
source degree $r'=r-aA$, target degree $s'=s-aB$, and remaining multiplicity
$\mu=\lambda-ka$.  The identities $qr=\lambda A$ and $ks=\lambda B$ give
$qr'=\mu A$ and $ks'=\mu B$.  Since $mA$ and $nB$ are divisible by three,
the remaining degrees satisfy the same divisibility conditions.  Moreover,
\[
 \frac{r'}A=\frac{s'}B=\frac{\mu}{k}=\frac{s}{B}-a\in[0,1).
\]
If $\mu=0$, then $r'=s'=0$, and assigning $a$ complete copies of $K_n^3$
to every color gives the required embedding.  Otherwise $0<r'\le s'<B$ and
$r'B=s'A$.  The existence part already proved gives an embedding with these
remaining degrees.  Lemma~\ref{CCAII:lem:scalar-simple-counts}, applied with multiplicity
$\mu$, gives a simple embedding for the remaining degrees.
Adding back the $a$ complete copies makes every target factor equimultiple.

Now suppose $k>q$.  The stated piecewise condition implies $s\le B-A<B$:
this is immediate when $n\ge2m$, while for $n<2m$ it follows from
\[
 3m(m-1)-(3m-n)(n+m-3)=(n-2m)(n-3),
\]
because then $(3m-n)s\le3(n-m)\binom m2$ implies $s\le B-A$.
Thus the condition on old multiplicities says exactly that the source factors
are simple.  Lemma~\ref{CCAII:lem:scalar-simple-counts} gives a simple target
factorization exactly when
\[
 s\le B-A,\qquad (3m-n)s\le3(n-m)\binom m2.
\]
The same identity shows that the second inequality implies the first when
$n<2m$.  If $n\ge2m$ and $3m-n>0$, it gives
$(3m-n)(B-A)\le3C_1$, so the first inequality implies the second.  If
$3m-n\le0$, the second inequality is automatic.  Thus these two inequalities
are equivalent to the stated piecewise condition in the theorem.  Since
$s<B$, the resulting simple target factorization is equimultiple.
Finally, Lemma~\ref{CCAII:lem:component-optimal} minimizes the number of
components in every color while preserving equimultiplicity.
\end{proof}

\section{Embeddings when \texorpdfstring{$n\ge(h-1)m$}{n >= (h-1)m}}\label{CCAII:sec:large-order}

Throughout this section, assume $h\ge4$ and $n\ge(h-1)m$.
Then $n-m\ge(h-2)m\ge h$, so $\tau=h$.  Recall that
$A=\binom{m-1}{h-1}$ and $B=\binom{n-1}{h-1}$, and put $D=B-A$.
By Theorem~\ref{CCAII:thm:new-edge-count-criterion}, it suffices to solve
\eqref{CCAII:eq:count-system}.

\begin{theorem}\label{CCAII:thm:large-order-optimal}
An $\vb r$-factorization of $\lambda K_m^h$ embeds in an
$\vb s$-factorization of $\lambda K_n^h$ satisfying
$c(F_i)=\kappa_i$ for $i\in[k]$ if and only if admissibility holds.
Whenever admissibility holds, every $X\in\mathscr X$ is realized by
such an embedding with $\operatorname{spr}(F_i)=\Delta_i(X)$ for $i\in[k]$.
\end{theorem}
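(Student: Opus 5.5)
By Theorem~\ref{CCAII:thm:new-edge-count-criterion}, it suffices to prove that $\mathscr X\ne\varnothing$ whenever admissibility holds. That theorem then realizes every $X\in\mathscr X$ with $\operatorname{spr}(F_i)=\Delta_i(X)$ and $c(F_i)=\kappa_i$. Necessity of admissibility is already known. So the whole task is to solve \eqref{CCAII:eq:count-system} in nonnegative integers. Since $\tau=h$, there are $h$ types.

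\textbf{Step 1: a real solution.} I first build a nonnegative real solution of the form
\[
 y_{it}=\frac{s_iC_t}{B}-r_ic_t .
\]
The first term alone is the proportional solution of the unembedded problem: summing over $t$ gives $ns_i/h$ edges and $(n-m)s_i$ new incidences. Hence the row equations hold exactly when the correction vector $c=(c_t)$ satisfies
\[
 \sum\nolimits_t c_t=\frac mh,\qquad \sum\nolimits_t tc_t=0 .
\]
The column equations hold because $\sum_i s_i=\lambda B$ and $\sum_i r_i=\lambda A$, provided
\[
 \frac{AC_t}{B}-c_t \ \text{ is the share of type } t \text{ that compensates for the old edges}.
\]
Concretely this means $\sum_i r_ic_t=\lambda(AC_t/B)-(\text{deficit})$; I will fix this normalization by choosing $c$ so that $\lambda A\,c_t/B$-type identities hold. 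For nonnegativity, note that $s_i\ge r_i$, so it is enough that $c_t\le C_t/B$ for every $t$. Negative entries of $c$ only increase $y_{it}$.

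I will choose $c$ supported on types $1$ and $h$, possibly together with one intermediate type. Positive weight goes on $c_1$ and negative weight on $c_h$, with the two linear conditions solved explicitly. The inequality $c_1\le C_1/B$ should reduce to a polynomial inequality in $m,n,h$. Since $C_1=\binom{m}{h-1}(n-m)$ is large compared with $B/h$, I expect this inequality to hold once $n\ge(h-1)m$. This is where the threshold is used.

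\textbf{Step 2: integrality.} This is the main obstacle. The real point has to be moved to an integer point without losing nonnegativity. The constraint matrix (column sums plus two row equations per color) is not totally unimodular, so plain LP rounding does not apply. I will proceed in two stages.

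\begin{enumerate}[label=\textup{(\alph*)},leftmargin=1.7em,labelsep=.35em,itemsep=2pt,topsep=2pt]
\item Set aside three types, namely $1$, $2$ and $h$. For the remaining types $3\le t\le h-1$, round the $k\times(h-3)$ real matrix $(y_{it})$ to an integer matrix with entries within $1$ of the originals. The rounding should preserve the integer column sums $\lambda C_t$ and keep each row's partial sums and weighted partial sums within $O(h^2)$ of their real values. A Baranyai-type matrix rounding (equivalently, a flow or rounding in a bipartite network) does this.
\item For each color $i$, solve for $x_{i1},x_{i2},x_{ih}$ from the two row equations. One free integer parameter remains in each row, as in the $h=3$ analysis. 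Choose these parameters so that the column totals $\lambda C_1,\lambda C_2,\lambda C_h$ are met. By the interval argument used for Theorem~\ref{CCAII:thm:h3-exact}, integer parameters with the required sum exist as soon as each per-color interval is nonempty and the interval endpoints sum around $\lambda C_1$ on both sides.
\end{enumerate}

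Integrality of the resulting values comes from admissibility, $h\mid ns_i$ and $h\mid mr_i$. The parity issue in the type-$2$ coefficient is absorbed by the free parameter.

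\textbf{Step 3: checking the slack.} Finally I must show that the perturbations from the rounding in Step 2 are dominated by the real solution's slack in types $1$, $2$ and $h$. I would first try to make this slack explicit from Step 1 by choosing $c$ so that $y_{i1},y_{i2},y_{ih}$ are bounded below by a positive multiple of $s_i$, which is at least $1$ for every color with $s_i\ge1$. Colors with small $s_i$ then need separate care. If this fails, the fallback is a direct greedy construction. Fill the column $t=h$ (all-new edges) last, since it has no old-vertex incidences and can absorb any residual new-vertex degree. Use types $1$ and $h-1$ to meet each color's old-vertex degree $s_i-r_i$, and verify the column capacities using $n-m\ge(h-2)m$.
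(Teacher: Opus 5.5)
There is a genuine gap. The proposal never proves integer feasibility of \eqref{CCAII:eq:count-system}, and that is the entire content of the theorem.

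\textbf{Step 1 is wrong.} Since $\tau=h$, we have $\sum_{t\in[h]}C_t=\binom nh-\binom mh$. Hence $\sum_t s_iC_t/B=ns_i/h-s_i\binom mh/B$, not $ns_i/h$, and the old-vertex incidence of this vector is $s_imD/B<ms_i$. For an additional target color ($r_i=0$) your correction term vanishes, so no vector $c$ can repair the row equations. Your condition $\sum_tc_t=m/h$ rests on this miscount. Worse, the column equations read $\sum_iy_{it}=\lambda C_t-\lambda Ac_t=\lambda C_t$, which forces $c=0$, so the ansatz is inconsistent. A correct real point is $y_{it}=\delta_iC_t/D$ for $t<h$ and $y_{ih}=e_i-\delta_iT/D$, where $T=\sum_{t<h}C_t$. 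It is nonnegative as soon as $n\ge2m$, because $he_i\ge n\delta_i$ and $nD/h-T=\frac{n-m}{h}\bigl(\binom{n-m-1}{h-1}-\binom{m-1}{h-1}\bigr)$. So the threshold $n\ge(h-1)m$ is not used in the real relaxation at all, contrary to your expectation. Everything hinges on integrality.

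\textbf{Step 2 puts the slack in the wrong place.} With types $1$ and $2$ free (edges with $h-1$ and $h-2$ old vertices), each row needs $(h-1)x_{i1}+(h-2)x_{i2}=R_i$, where $R_i=m\delta_i-\sum_{3\le t<h}(h-t)x_{it}$.
\begin{itemize}
\item $x_{i1}$ now moves in steps of $h-2$, not $1$ as in the $h=3$ analysis.
\item $R_i$ must be a nonnegative combination of $h-1$ and $h-2$. Entrywise rounding can break this. Take $h=4$, $\delta_i=1$ and $n$ large. Then $y_{i3}=m\binom{n-m}3/D\in(m-2,m)$, and rounding it to $m-1$ leaves $R_i=1$, which is infeasible.
\item More seriously, nothing controls $x_{ih}\ge0$, i.e.\ $W_i\le e_i$. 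This is the analogue of $\sum_i\ell_i\le\lambda C_1$ in Theorem~\ref{CCAII:thm:h3-exact}, which is a real restriction there. For $(h-1)m\le n<hm$ it forces every color with $m\delta_i>e_i$ to receive enough edges having at least two old vertices.
\end{itemize}
Step 3 only hopes that the slack works out.

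\textbf{The missing idea} is the paper's allocation. Take the slack in edges with one old vertex (incidence coefficient $1$) and in all-new edges (coefficient $0$), so integrality is automatic. Then assign the edges with $u=h-1,\dots,2$ old vertices greedily, serving first the colors whose accumulated reduction $z_i$ is still below $p_i=\max\{0,m\delta_i-e_i\}$.
\begin{itemize}
\item The greedy never gets stuck, by the reserve inequality $\sum_{v<u}vM_v>(u-1)D$.
\item Every priority is met, because $\sum_ip_i\le\lambda mD/h\le\lambda N$ (Lemmas~\ref{CCAII:lem:incidence-reserves} and~\ref{CCAII:lem:direct-mixed-allocation}).
\end{itemize}
These two counting inequalities are exactly where $n\ge(h-1)m$ enters. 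Your fallback gestures in this direction but contains neither the priority rule nor these inequalities, and they are the actual proof.
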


\subsection{Choosing the added-edge counts}

A mixed edge containing $u$ old vertices has type $h-u$.
For $u\in[h-1]$, set
\[
 M_u=C_{h-u}=\binom{m}{u}\binom{n-m}{h-u},\qquad
 d_u=\binom{m-1}{u-1}\binom{n-m}{h-u}.
\]
Here $M_u$ is the number of mixed edges containing $u$ old vertices,
and $d_u$ is the number of such edges through a fixed old vertex.
Let $N=\sum\nolimits_{u=2}^{h-1}M_u$, the number of mixed edges containing at
least two old vertices. We have
\begin{equation}\label{CCAII:eq:mixed-incidences}
 uM_u=md_u,\qquad \sum\nolimits_{u\in[h-1]}d_u=D.
\end{equation}

For admissible degrees, put $\delta_i=s_i-r_i$ and recall that
$e_i=(ns_i-mr_i)/h$. Then $\delta_i,e_i\in\mathbb N\cup\{0\}$ and
$\sum\nolimits_{i\in[k]}\delta_i=\lambda D$.
For $x_{i,h-u}\in\mathbb N\cup\{0\}$, put
$W_i=\sum\nolimits_{u\in[h-1]}x_{i,h-u}$.  It is enough to choose these numbers so that
\begin{equation}\label{CCAII:eq:mixed-targets}
\begin{cases}
 \displaystyle\sum\nolimits_{i\in[k]}x_{i,h-u}=\lambda M_u
   &\text{for }u\in[h-1],\\[3pt]
 \displaystyle\sum\nolimits_{u\in[h-1]}u x_{i,h-u}=m\delta_i
   &\text{for }i\in[k],\\[3pt]
 W_i\le e_i
   &\text{for }i\in[k].
\end{cases}
\end{equation}
The entries for type $h$ are then determined by
\begin{equation}\label{CCAII:eq:complete-new-column}
 x_{ih}=e_i-W_i.
\end{equation}
These entries are nonnegative, and
\[
 \sum\nolimits_{i\in[k]} x_{ih}
 =\sum\nolimits_{i\in[k]} e_i-\sum\nolimits_{i\in[k]} W_i
 =\lambda\sum\nolimits_{t\in[h]}C_t
   -\lambda\sum\nolimits_{t\in[h-1]}C_t
 =\lambda C_h.
\]
Also, $\sum\nolimits_{t\in[h]}t x_{it}=he_i-m\delta_i=(n-m)s_i$.
Thus $X$ satisfies \eqref{CCAII:eq:count-system}.

\begin{lemma}\label{CCAII:lem:incidence-reserves}
For $2\le u\le h-1$,
\begin{equation}\label{CCAII:eq:incidence-reserve}
 \sum\nolimits_{v\in[u-1]}vM_v>(u-1)D.
\end{equation}
If, in addition, $n<hm$, then
\begin{equation}\label{CCAII:eq:many-mixed-edges}
 N\ge mD/h.
\end{equation}
\end{lemma}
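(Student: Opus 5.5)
The plan is to express both inequalities in terms of the numbers $d_u$, using \eqref{CCAII:eq:mixed-incidences}, and then control the ratios $d_{v+1}/d_v$ using $n\ge(h-1)m$. Since $vM_v=md_v$, we have $\sum_{v\in[u-1]}vM_v=mS$ with $S=\sum_{v\in[u-1]}d_v$. Put $T=\sum_{v=u}^{h-1}d_v$, so that $D=S+T$. Then \eqref{CCAII:eq:incidence-reserve} is equivalent to
\[
 (m-u+1)S>(u-1)T .
\]
Because $u\le h-1\le m-1$, we have $m-u+1\ge2$. Also $S\ge d_1=\binom{n-m}{h-1}>0$, since $n-m\ge(h-2)m\ge h$.

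The key computation is the ratio
\[
 \frac{d_{v+1}}{d_v}=\frac{(m-v)(h-v)}{v\,(n-m-h+v+1)} .
\]
The hypothesis $n\ge(h-1)m$ gives
\[
 n-m-h+v+1\ge(h-2)m-h+2=(h-2)(m-1).
\]
Since $m-v\le m-1$, this yields $d_{v+1}/d_v\le (h-v)/\bigl(v(h-2)\bigr)$. In particular $d_2\le\frac{h-1}{h-2}d_1\le\frac32 d_1$, and $d_{v+1}\le d_v/2$ for $v\ge2$. Hence the tail satisfies $T\le 2d_u$, with strict inequality when $d_u>0$ and $u<h-1$. When $d_u=0$, $T=0$ and the claim is immediate since $S>0$.

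For $u=2$, the bounds give $(u-1)T=T\le 2d_2\le 3d_1$. On the other side, $(m-1)S\ge(m-1)d_1\ge3d_1$. I would then sharpen one of these two steps to get strictness, for instance by using $d_2<\frac{h-1}{h-2}d_1$ when $m>2$, or by handling equality separately.

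For $u\ge3$, combine $T\le2d_u$ with $d_u\le\frac{h-u+1}{(u-1)(h-2)}d_{u-1}$. This gives
\[
 (u-1)T\le\frac{2(h-u+1)}{h-2}\,d_{u-1}\le 2d_{u-1}.
\]
Meanwhile $(m-u+1)S\ge2(d_1+d_{u-1})>2d_{u-1}$. The main care here is bookkeeping of strictness and of the possibility that $d_v=0$ for $v>m$; the ratio bounds remain valid for such terms.

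For \eqref{CCAII:eq:many-mixed-edges}, write $N=m\sum_{u=2}^{h-1}d_u/u$. After dividing by $m$, the claim $N\ge mD/h$ becomes
\[
 \sum_{u=2}^{h-1}d_u\Bigl(\frac1u-\frac1h\Bigr)\ge\frac{d_1}{h}.
\]
Every term on the left is nonnegative, so it suffices to show that the $u=2$ term alone satisfies $d_2(h-2)\ge 2d_1$, i.e.\ $d_2/d_1\ge 2/(h-2)$. Now $n<hm$ gives the integer bound $n-m-h+2\le(h-1)(m-1)$. Therefore
\[
 \frac{d_2}{d_1}=\frac{(m-1)(h-1)}{n-m-h+2}\ge1\ge\frac{2}{h-2},
\]
using $h\ge4$.

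I expect the only delicate step to be the strictness in the case $u=2$ of \eqref{CCAII:eq:incidence-reserve}, where the crude constants meet. If needed, I would use the exact value of $d_2/d_1$ together with $T<2d_2$, which holds because the geometric bound is strict once $d_3>0$ or $h-1=2$ fails.
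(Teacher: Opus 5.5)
Your proof is correct. For \eqref{CCAII:eq:many-mixed-edges} it is the paper's argument written in terms of the $d_u$. The hypothesis $n<hm$ gives $d_1\le d_2$, equivalently $M_1\le 2M_2$, and only the $u=2$ term of $hN-mD=-M_1+\sum_{u=2}^{h-1}(h-u)M_u$ is needed.

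For \eqref{CCAII:eq:incidence-reserve} your route differs. The paper double-counts a second old vertex in the mixed edges through a fixed old vertex, obtaining
\[
 \sum\nolimits_{u\in[h-1]}(u-1)d_u=\frac{(h-1)(m-1)}{n-1}B-(h-1)A<D .
\]
This gives $(u-1)\sum_{v\ge u}d_v<D$, and hence $m\sum_{v<u}d_v>\frac{m(u-2)}{u-1}D\ge(u-1)D$ for all $u\ge3$ at once. Only the case $u=2$ uses the ratio bound $d_2/d_1\le\frac32$. You instead bound every consecutive ratio, $d_{v+1}/d_v\le(h-v)/(v(h-2))$, and sum a geometric tail.

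Your version is elementary and local in $v$, but it invokes the order hypothesis in each ratio estimate. The paper's moment identity handles every $u\ge3$ with the single inequality $(h-1)(m-1)<n-1$. Its byproduct $D<2d_1+d_2$ is quoted in the appendix. Your bound $D<d_1+2d_2$ would serve equally well there, since $d_1\le d_2$ in that range.

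The strictness you flag for $u=2$ is not actually delicate. Every $d_v$ with $v\in[h-1]$ is positive, because $v\le h-1\le m-1$ and $n-m\ge h$; so the vanishing cases you guard against never occur. The finite geometric sum therefore gives $T<2d_2\le 3d_1\le(m-1)d_1$ directly. Your other suggested fix, a strict bound $d_2<\frac{h-1}{h-2}d_1$, is false: equality holds when $n=(h-1)m$.
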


\begin{proof}
Counting a second old vertex in an edge through a fixed old vertex gives
\begin{equation}\label{CCAII:eq:old-moment}
 \sum\nolimits_{u\in[h-1]}(u-1)d_u
 =(m-1)\binom{n-2}{h-2}-(h-1)A
 =\frac{(h-1)(m-1)}{n-1}B-(h-1)A<D.
\end{equation}
The last inequality uses $(h-1)(m-1)<n-1$. Also $n-m\ge(h-2)m$, so
\begin{equation}\label{CCAII:eq:d-ratio}
 \frac{d_2}{d_1}
 =\frac{(h-1)(m-1)}{n-m-h+2}
 \le\frac{h-1}{h-2}\le\frac32.
\end{equation}
We prove \eqref{CCAII:eq:incidence-reserve} in two cases.
\begin{enumerate}[label=\textup{(\roman*)},leftmargin=1.7em,labelsep=.35em,itemsep=3pt,topsep=2pt]
\item If $u=2$, subtracting $D$ from the sum in
\eqref{CCAII:eq:old-moment} gives
$\sum\nolimits_{v=3}^{h-1}(v-2)d_v<d_1$. Hence
\[
 D=d_1+d_2+\sum\nolimits_{v=3}^{h-1}d_v
 <2d_1+d_2\le\tfrac72d_1<md_1=M_1.
\]

\item If $u\ge3$, \eqref{CCAII:eq:old-moment} gives
$(u-1)\sum\nolimits_{v=u}^{h-1}d_v<D$, and therefore
\[
 \sum\nolimits_{v\in[u-1]}vM_v
 =m\sum\nolimits_{v\in[u-1]}d_v
 >\frac{m(u-2)}{u-1}D\ge(u-1)D.
\]
Indeed, $m\ge h\ge u+1$ and
$(u+1)(u-2)-(u-1)^2=u-3\ge0$.
\end{enumerate}

If $n<hm$, then $n-m\le(h-1)m-1$, so \eqref{CCAII:eq:d-ratio} also gives
$d_1\le d_2$, or $M_1\le2M_2$. Consequently,
\[
 hN-mD=-M_1+\sum\nolimits_{u=2}^{h-1}(h-u)M_u
 \ge -M_1+(h-2)M_2\ge0.
\]
\end{proof}

\begin{lemma}\label{CCAII:lem:direct-mixed-allocation}
Let $\delta_i,p_i\in\mathbb N\cup\{0\}$ satisfy
\[
 \sum\nolimits_{i\in[k]}\delta_i=\lambda D,\qquad
 0\le p_i\le\lceil m\delta_i/h\rceil.
\]
There are $x_{i,h-u}\in\mathbb N\cup\{0\}$ satisfying
\[
 \sum\nolimits_{i\in[k]}x_{i,h-u}=\lambda M_u
 \quad\text{for }u\in[h-1],
 \qquad
 \sum\nolimits_{u\in[h-1]}u x_{i,h-u}=m\delta_i
 \quad\text{for }i\in[k].
\]
If $\lambda N\ge\sum\nolimits_{i\in[k]}p_i$, they may be chosen so that, with
$W_i=\sum\nolimits_{u\in[h-1]}x_{i,h-u}$,
\[
 W_i\le m\delta_i-p_i\qquad\text{for }i\in[k].
\]
\end{lemma}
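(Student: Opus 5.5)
The plan is to construct the counts by a greedy packing. I regard the $\lambda M_u$ mixed edge copies with $u$ old vertices as items of weight $u$. Color $i$ is a bin of capacity exactly $m\delta_i$. Put $x_{i,h-u}$ equal to the number of weight-$u$ items placed in bin $i$. The column equations say every item is placed, and the row equations say every bin is filled exactly. The totals are consistent, since by \eqref{CCAII:eq:mixed-incidences}
\[
 \sum\nolimits_{u\in[h-1]}u\lambda M_u=\lambda m\sum\nolimits_{u\in[h-1]}d_u=\lambda mD=\sum\nolimits_{i\in[k]}m\delta_i.
\]
The weight-$1$ items are flexible, so the only danger is a large item that fits in no bin.

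For the first assertion, I will place items in decreasing order of weight, $u=h-1,h-2,\dots,1$. Each item goes into any bin whose remaining capacity $c_i$ is at least $u$. Suppose some weight-$u$ item with $u\ge2$ cannot be placed. Then every bin has $c_i\le u-1$. A bin with $\delta_i=0$ has $c_i=0$. So the total remaining capacity is at most $(u-1)\#\{i:\delta_i>0\}\le(u-1)\sum\nolimits_{i\in[k]}\delta_i=(u-1)\lambda D$.

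On the other hand, all items of weight less than $u$ are still unplaced, and so is the current item. Hence the remaining capacity is at least $\lambda\sum\nolimits_{v\in[u-1]}vM_v+u$. By \eqref{CCAII:eq:incidence-reserve} of Lemma~\ref{CCAII:lem:incidence-reserves}, this exceeds $(u-1)\lambda D$, which is a contradiction. Weight-$1$ items then fill the remaining capacity exactly, because the totals agree.

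For the second assertion, note that $W_i\le m\delta_i-p_i$ is equivalent to
\[
 \sum\nolimits_{u\in[h-1]}(u-1)x_{i,h-u}\ge p_i .
\]
It therefore suffices that bin $i$ receives at least $p_i$ items of weight at least $2$. I will run a preliminary phase that gives each color $p_i$ such items, drawn from the $\lambda N$ items of weight $\ge2$; this is where $\lambda N\ge\sum_{i\in[k]}p_i$ is used. To respect capacities, I will hand out the lightest heavy items first. The weight-$2$ items go to colors in round-robin, then weight $3$, and so on.

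The capacity check uses $p_i\le\lceil m\delta_i/h\rceil$. If every reserved item for color $i$ has weight $2$, its reserved weight is $2p_i$. This is at most $m\delta_i$ except in the degenerate case $p_i=1$, $m\delta_i=1$, which cannot occur since $m\ge h\ge4$. When heavier items are forced, I expect $p_i(h-1)$ may exceed $m\delta_i$. To handle that, I plan to substitute a weight-$2$ item already reserved for another color, or to use that $2M_2\ge M_1$-type inequalities from Lemma~\ref{CCAII:lem:incidence-reserves} supply enough weight-$2$ items.

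After this phase, I rerun the descending greedy on the remaining items with the reduced capacities $c_i$. The reserve argument must be redone in this setting. Reserved items are heavy items removed from the supply together with an equal amount of capacity, so the key inequality becomes remaining small weight versus $(u-1)\#\{i:c_i>0\}$. I still bound that count by $\lambda D$, so the same contradiction should go through.

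The main obstacle is this interaction. The reservation must not use up a weight class that the later greedy needs, and the per-color capacities in the reservation phase must not be exceeded when $\delta_i$ is small. If the direct argument gets messy, the alternative I would try first is an exchange argument. Start from any solution of the first part. While some color has $W_i>m\delta_i-p_i$, it holds many weight-$1$ items. Since $\lambda N\ge\sum_{i\in[k]}p_i$, some other color holds more heavy items than it needs. Swap one of its weight-$u$ items ($u\ge2$) for $u$ weight-$1$ items from color $i$. This preserves all row and column sums and strictly increases $\sum_i\min\{p_i,\sum_u(u-1)x_{i,h-u}\}$.
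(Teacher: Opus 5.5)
Your first assertion is handled as in the paper: a descending greedy on the items of weight at least $2$. The contradiction comes from \eqref{CCAII:eq:incidence-reserve} because every lighter item is still unplaced, and weight-$1$ items then fill the remaining capacity.

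\textbf{The reservation plan for the second assertion has two gaps.}

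First, replacing the requirement $z_i:=\sum_u(u-1)x_{i,h-u}\ge p_i$ by ``$p_i$ items of weight at least $2$'' hurts you. The hypothesis only bounds $\sum_ip_i$ by $\lambda N$. So when $\sum_ip_i>\lambda M_2$, some colors must be reserved heavier items, and $p_i$ of them can exceed capacity. For example, with $h=4$, $m\delta_i=5$ and $p_i=2$, two weight-$3$ items already need $6>5$. Your proposed repair is left open. Under the weighted requirement a weight-$u$ item counts $u-1$, and that is exactly what keeps capacities safe.

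Second, once reserved weight-$2$ items leave the supply, the rerun greedy needs
$\lambda\sum_{v<u}vM_v-R_{<u}+u>(u-1)\lambda D$, where $R_{<u}$ is the reserved weight below $u$. Inequality \eqref{CCAII:eq:incidence-reserve} gives no control over $R_{<u}$, so ``the same contradiction should go through'' is unsupported.

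\textbf{How the paper avoids both problems.} It merges the two phases into one descending pass. Each heavy item goes to the first color with $z_i<p_i$, and otherwise to any color with room. Let $a_i$ be the incidence already assigned to color $i$. While $z_i<p_i$ we have $a_i\le2z_i\le2p_i-2$, since $u\le2(u-1)$. Also $1\le p_i\le\lceil m\delta_i/h\rceil$ and $m\ge h$ give $2p_i+h-3\le m\delta_i$, so the item fits. If no color is deficient, all lighter items are still unplaced and the first-part contradiction applies verbatim. Each assignment made while a deficit remains lowers $\sum_i\max\{0,p_i-z_i\}$ by at least one.

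\textbf{Your fallback exchange argument is sound} once its three claims are justified. Put $H_i=\sum_{u\ge2}x_{i,h-u}$, so $H_i\le z_i$.
\begin{enumerate}[label=\textup{(\roman*)},leftmargin=1.7em]
\item If $z_i<p_i$, then $\delta_i\ge1$ and $x_{i,h-1}\ge m\delta_i-2z_i\ge m\delta_i-2p_i+2\ge h-1$, by the same inequality $2p_i+h-3\le m\delta_i$. So color $i$ really has $u$ weight-$1$ items to give away.
\item Since $H_i<p_i$ and $\sum_jH_j=\lambda N\ge\sum_jp_j$, some $j$ has $H_j>p_j$. This, and not merely $z_j>p_j$, is the correct reading of ``more heavy items than it needs''; with only $z_j>p_j$ the net change can be zero.
\item Removing any heavy item from such a $j$ leaves $z_j\ge H_j-1\ge p_j$. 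Hence $\sum_i\min\{p_i,z_i\}$ rises by at least one.
\end{enumerate}
The process therefore terminates with $W_i=m\delta_i-z_i\le m\delta_i-p_i$.

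This local-search route is more modular, since it post-processes any solution of the first part. The paper's single greedy is constructive, and its priority rule is reused in Appendix~\ref{CCAII:app:rounding}: the count $R$ for the exceptional triples relies on heavier items being sent first to the first deficient color.
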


\begin{proof}
Put $b_i=m\delta_i$. Assign the mixed edges containing at least two old
vertices in the order $u=h-1,\ldots,2$. Let $a_i$ be the total old-vertex
incidence already assigned to color $i$, and let $z_i$ be the sum of $u-1$
over the mixed edges already assigned to color $i$; equivalently, $z_i$ is
$a_i$ minus the number of such edges. Initially both are zero. Assign the next
edge to the first color with $z_i<p_i$, if one exists; otherwise choose any
color with $a_i+u\le b_i$. Each assignment adds $u$ to $a_i$ and $u-1$ to
$z_i$.

To check that each assignment is legal, consider two cases.
\begin{enumerate}[label=\textup{(\roman*)},leftmargin=1.7em,labelsep=.35em,itemsep=3pt,topsep=2pt]
\item If $z_i<p_i$, every previously assigned edge contains at least two old
vertices, so $a_i\le2z_i$. Hence
\[
 a_i+u\le2(p_i-1)+(h-1)
 =2p_i+h-3\le2\lceil b_i/h\rceil+h-3\le b_i.
\]
For the last inequality, $p_i>0$ implies $b_i\ge h$.  If $p_i=1$, the
claim is immediate.  If $p_i\ge2$, then $p_i\le\lceil b_i/h\rceil$ gives
$b_i\ge h(p_i-1)+1\ge2p_i+h-3$.

\item Suppose $z_i\ge p_i$ for all $i$, but no color satisfies
$a_i+u\le b_i$. Each color with $\delta_i>0$ would have at most
$u-1$ incidences left to receive. Since each positive $\delta_i$ is at least
one and $\sum\nolimits_{i\in[k]}\delta_i=\lambda D$, there are at most $\lambda D$
colors with $\delta_i>0$. Hence the total remaining incidence would be at
most $(u-1)\lambda D$. However, all mixed edges containing fewer than $u$
old vertices are still unassigned and contribute more than this by
\eqref{CCAII:eq:incidence-reserve}, a contradiction.
\end{enumerate}

After assigning all mixed edges containing at least two old vertices, set
$x_{i,h-1}=b_i-a_i$. These entries sum to $\lambda M_1$ by
\eqref{CCAII:eq:mixed-incidences}. The number of mixed edges in color $i$ is
$W_i=b_i-z_i$.  While some $z_i<p_i$, each assignment decreases
$\sum\nolimits_{i\in[k]}\max\{0,p_i-z_i\}$ by at least one.
Thus $\lambda N\ge\sum\nolimits_{i\in[k]}p_i$ gives $z_i\ge p_i$ for every
color and hence $W_i\le b_i-p_i$.
\end{proof}

\begin{proof}[Proof of Theorem~\ref{CCAII:thm:large-order-optimal}]
Necessity follows by counting target incidences. For sufficiency, use
$\delta_i,e_i$ as above and put $p_i=\max\{0,m\delta_i-e_i\}$.
Since
\[
 he_i=n\delta_i+(n-m)r_i\ge n\delta_i,
\]
we have $p_i=0$ when $n\ge hm$. Suppose instead that
$(h-1)m\le n<hm$. Then
\[
 m\delta_i-e_i
 =\frac{(hm-n)\delta_i-(n-m)r_i}{h}
 \le\frac{hm-n}{h}\delta_i
 \le\frac{m\delta_i}{h}.
\]
Hence
\[
 \sum\nolimits_{i\in[k]} p_i
 \le\frac m h\sum\nolimits_{i\in[k]}\delta_i
 =\frac{m\lambda D}{h}
 \le\lambda N,
\]
where the last inequality is Lemma~\ref{CCAII:lem:incidence-reserves}.
Lemma~\ref{CCAII:lem:direct-mixed-allocation} therefore gives
\eqref{CCAII:eq:mixed-targets}, since $W_i\le m\delta_i-p_i\le e_i$.
Define the last column by \eqref{CCAII:eq:complete-new-column}.  Thus
$\mathscr X\ne\varnothing$, and
Theorem~\ref{CCAII:thm:new-edge-count-criterion} now gives the claimed
embedding and, for every $X\in\mathscr X$, an embedding with the stated
spread and component counts.  Together with Corollary~\ref{CCAII:cor:h23-large-order},
this also proves Theorem~\ref{CCAII:thm:main-existence}.
\end{proof}

Under admissibility, all target factors can be chosen connected if and only if
$s_i>r_i$ for $i\in[q]$ and $s_i\ge2$ for $i\in[k]\setminus[q]$; this is
exactly the condition $\kappa_i=1$ for every $i\in[k]$.

\begin{corollary}\label{CCAII:cor:large-order-scalar}
An $r$-factorization of $\lambda K_m^h$ embeds in an $s$-factorization of
$\lambda K_n^h$ if and only if $h\mid ns$ and $s\mid\lambda B$.
Under these conditions, all target factors can be chosen connected if and
only if $s>r$.
\end{corollary}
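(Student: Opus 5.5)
The corollary is the constant-degree case of Theorem~\ref{CCAII:thm:large-order-optimal}, with the section's standing hypotheses $h\ge4$ and $n\ge(h-1)m$. The plan is to translate admissibility into the two stated divisibility conditions and then read off connectivity from the formula for $\kappa_i$.

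\emph{Existence.} Here $r_i=r$ for $i\in[q]$, and $s_i=s$ for $i\in[k]$, with $r_i=0$ for $i\in[k]\setminus[q]$. Admissibility says $h\mid ns_i$ for every $i$, which is just $h\mid ns$. It also says $\sum_{i\in[k]}s_i=ks=\lambda B$. For necessity, an embedding gives both conditions, and $ks=\lambda B$ forces $s\mid\lambda B$. For sufficiency, set $k=\lambda B/s$. The standing requirement $q\le k$ must be checked, and I expect it to be the only real point. Since the source is an $r$-factorization of $\lambda K_m^h$, we have $qr=\lambda A$, so $q\le k$ is equivalent to $sA\le rB$, i.e.\ $s/r\le B/A$. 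Note that $r\le s$ is a standing assumption, and the statement is implicitly about embeddings with common target degree $s$ in which source colors keep their degree $r\le s$. So I would argue that $q\le k$ is part of the standing assumptions under which $\mathscr E$ is defined: the corollary is read with $s\ge r$ and $k=\lambda B/s\ge q$ implicit, exactly as in Theorem~\ref{CCAII:thm:h3-scalar}, where $k$ and $q$ are defined and the standing assumption $q\le k$ is used. With these conventions admissibility holds, and Theorem~\ref{CCAII:thm:large-order-optimal} supplies the embedding.

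\emph{Connectivity.} Apply the remark following Theorem~\ref{CCAII:thm:large-order-optimal}: all factors can be connected exactly when $\kappa_i=1$ for all $i$, that is, when $s>r$ for the source colors and $s\ge2$ for the additional colors. If $s>r$, then $s\ge2$, since $r\ge1$, so both conditions hold. Conversely, if $s=r$, every source color has $\kappa_i=c(G_i)+1\ge2$ or $c(G_i)+(n-m)/h\ge2$, so not all factors are connected. Since $q\ge1$, there is always a source color, so this direction needs no further case split.

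The main obstacle is the bookkeeping around $k$: making explicit that $k$ is determined by $ks=\lambda B$, and that the implicit hypotheses $r\le s$ and $q\le k$ are part of the setup rather than additional conditions. Once that is settled, everything reduces to Theorem~\ref{CCAII:thm:large-order-optimal} and the formula for $\kappa_i$.
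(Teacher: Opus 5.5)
Your proposal is correct and follows the paper's route. In the constant-degree case, admissibility is exactly $h\mid ns$ together with $ks=\lambda B$, so $k=\lambda B/s$. Theorem~\ref{CCAII:thm:large-order-optimal} then gives existence, and the $\kappa_i=1$ criterion stated after that theorem reduces connectivity to $s>r$ because $r\ge1$. You are also right that $r\le s$ and $q\le k$ (equivalently $sA\le rB$) do not follow from the two divisibility conditions. The corollary is true only with these standing assumptions of Section~1 in force, which is how the paper intends it.
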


\subsection{Bounds for the added-edge counts}

The main embedding theorem has already been proved.  For simple and
equimultiple embeddings, we additionally need $x_{it}\le C_t$ in the
specified colors.  We obtain these stronger bounds by assigning factors
to colors. Delete the new vertices from each mixed edge; the remaining
edges form the nonuniform hypergraph
\[
 \mathcal P=\bigcup\nolimits_{u=1}^{h-1}\binom{n-m}{h-u}K_m^u,
\]
which has degree $D$ at every old vertex.

\begin{corollary}\label{CCAII:cor:mixed-row-factorization}
Let $m\ge h\ge4$ and $n\ge(h-1)m$, with $(h,m,n)\ne(4,5,15)$.
Then $\mathcal P$ has a $1$-factorization in which all factors contain at
most $L=\lfloor n/h\rfloor$ edges.
\end{corollary}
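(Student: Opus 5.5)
The plan is to reduce the claim to Theorem~\ref{CCAII:thm:balanced-R}, applied on the $m$ old vertices with edge sizes $u\in[h-1]$, multiplicities $\lambda_u=\binom{n-m}{h-u}$, $k=D$ colors and $d_i=1$ for every color. That theorem produces the required $1$-factorization as soon as we exhibit a nonnegative integer matrix $(a_{iu})_{i\in[D],\,u\in[h-1]}$ satisfying three conditions:
\[
 \sum\nolimits_{u\in[h-1]}u\,a_{iu}=m\quad(i\in[D]),\qquad
 \sum\nolimits_{i\in[D]}a_{iu}=M_u\quad(u\in[h-1]),\qquad
 \sum\nolimits_{u\in[h-1]}a_{iu}\le L\quad(i\in[D]).
\]
Here $a_{iu}$ is the number of size-$u$ edges in factor $i$, and the column totals are $\binom{n-m}{h-u}\binom mu=M_u$. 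The factor-size bound is just the row bound $\sum_u a_{iu}\le L$. So the whole problem becomes an integer allocation problem, in the same spirit as Lemma~\ref{CCAII:lem:direct-mixed-allocation}.

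Following the notation of that lemma, write $z_i=\sum_u(u-1)a_{iu}$. Then row $i$ has exactly $m-z_i$ edges, and the row bound becomes $z_i\ge m-L$. The total available is
\[
 Z=\sum\nolimits_u(u-1)M_u=m\sum\nolimits_u(u-1)d_u,
\]
which is $m$ times the quantity in \eqref{CCAII:eq:old-moment}. The first numerical step is to verify the average inequality $Z\ge D(m-L)$, with enough slack to absorb rounding. Using \eqref{CCAII:eq:old-moment}, this reads roughly
\[
 \frac{(h-1)(m-1)}{n-1}B-(h-1)A\ \ge\ \Bigl(1-\frac{L}{m}\Bigr)D .
\]
Since $n\ge(h-1)m$ gives $L\ge\lfloor (h-1)m/h\rfloor$, the right-hand side is about $D/h$ and the left is about $D\cdot(h-1)(m-1)/(n-1)$. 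I expect this to hold except in very small cases, and $(4,5,15)$ should be exactly where it fails.

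The second step is to build the rows greedily, as in Lemma~\ref{CCAII:lem:direct-mixed-allocation}. Process the edges of sizes $u=h-1,\dots,2$ in decreasing order. Give each edge to the color with currently smallest $z_i$ among those with remaining capacity $m-a_i\ge u$, where $a_i=\sum_u u\,a_{iu}$. At the end, fill each row with size-$1$ edges, setting $a_{i1}=m-a_i$. The column totals then hold automatically by \eqref{CCAII:eq:mixed-incidences}. Two things must be checked:
\begin{enumerate}[label=\textup{(\roman*)},leftmargin=1.7em,labelsep=.35em,itemsep=3pt,topsep=2pt]
\item \emph{Legality.} No edge is ever stuck. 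This uses the reserve inequality \eqref{CCAII:eq:incidence-reserve} with $\lambda=1$: the unassigned smaller edges carry more than $(u-1)D$ incidences, which is more than can remain when every row has fewer than $u$ free slots.
\item \emph{Balance.} The final $z_i$ are nearly equal. Because the greedy always tops up the minimum, and each step increases $z_i$ by at most $h-2$, I expect $\max_i z_i-\min_i z_i\le h-2$.
\end{enumerate}
Combined with the average bound, balance would give $\min_i z_i\ge m-L$, provided the slack in $Z-D(m-L)$ is at least about $(h-2)D$ spread over the rows. If this is too crude, I would replace the greedy by a flow or interval argument: show that the set of feasible rows is closed under the exchange "replace one size-$u$ and one size-$v$ edge by sizes $u+1$ and $v-1$", and move between rows until they are balanced.

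The main obstacle is the balance step together with the numerics. The gap between the average $Z/D$ and the target $m-L$ can be small when $n$ is close to $(h-1)m$ and $m$ is close to $h$. The greedy may then waste capacity: a row may hold $m-a_i<u$ free slots that are too few for a large edge, leaving its $z_i$ low. I would first try to prove a per-row guarantee directly. The idea is to fill each row with as many size-$(h-1)$ and size-$(h-2)$ edges as fit, which already gives $z_i\ge m-\lceil m/(h-1)\rceil-1$, and compare this with $m-L$. The total supply $M_{h-1}+M_{h-2}$ is large relative to $D$ when $n\ge(h-1)m$, so this should work in most cases. The remaining small cases, with $h=4$ and $m\in\{4,5,6\}$, I would check by hand; among these, $(4,5,15)$ is the genuine exception the statement excludes.
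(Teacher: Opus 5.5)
\textbf{There is a genuine gap.} Your reduction to Theorem~\ref{CCAII:thm:balanced-R} (with $D$ colors, $d_i=1$, and the row bound rewritten as $z_i\ge m-L$) is exactly the paper's. The quantitative core of your plan, however, does not work.

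\emph{The identity for $Z$ is wrong.} The claim $Z=m\sum_u(u-1)d_u$ is false. Since $uM_u=md_u$, one has $Z=\sum_u(u-1)M_u=mD-T$ with $T=\sum_uM_u$, whereas $m\sum_u(u-1)d_u=\sum_u u(u-1)M_u$. The correct average condition $Z\ge D(m-L)$ is simply $T\le LD$. Your displayed inequality holds comfortably even at the excluded triple: it reads $300\ge144$ at $(4,5,15)$, where in fact $T=1150>1080=LD$. So it carries no information.

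\emph{The slack is far too small for a balance argument.} With the correct $Z$, the average slack per row, $L-T/D$, is nowhere near the $h-2$ your balance estimate needs. It is below one in many cases: about $0.19$ at $(4,6,18)$ (where $T=2550$, $D=670$, $L=4$), $0.35$ at $(4,9,27)$, and $0.90$ at $(4,13,39)$. So even a proved bound $\max_iz_i-\min_iz_i\le h-2$ could not give $\min_iz_i\ge m-L$.

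\emph{The per-row fallback is infeasible.} At $(4,6,18)$ the edges of sizes $3$ and $2$ carry only $2700$ of the $mD=4020$ old-vertex incidences, so most rows cannot be filled with large edges. The delicate cases are also not confined to $h=4$, $m\le6$.

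\emph{What is missing.} Each row needs only $p=\max\{0,m-L\}\le\lceil m/h\rceil$ units of reduction. You need a rule under which every edge with at least two old vertices pays at least one unit of an unmet requirement.

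The paper sends each such edge, in the order $u=h-1,\dots,2$, to the first color with $z_i<p$. Lemma~\ref{CCAII:lem:direct-mixed-allocation} with $\delta_i=1$ and $p_i=p$ makes every assignment legal. Each assignment lowers $\sum_i\max\{0,p-z_i\}$ by at least one, so $N\ge pD$ suffices. If $p\le m/h$, this follows from $N\ge mD/h$ in Lemma~\ref{CCAII:lem:incidence-reserves}, which rests on $M_1\le2M_2$ for $n<hm$.

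Only the strip $(h-1)m\le n\le(h-1)m+r_0-1$ with $p=\lceil m/h\rceil$ remains. There the paper uses:
\begin{itemize}
\item estimates covering $h\ge9$, and $h=4$ with $m\ge16$;
\item a direct check of $85$ triples;
\item for the five triples other than $(4,5,15)$ with $N<pD$, a sharper count showing that the deficit left before the size-$2$ edges are processed is at most $M_2$.
\end{itemize}

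Your min-$z$ greedy could be rescued in the same spirit. In the last phase, a row with $z_i<p$ still has room for a size-$2$ edge, so each size-$2$ edge lowers the deficit by one while it is positive. That rescue still requires bounding the deficit before that phase by $M_2$ in every case, and your proposal does not do this.
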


The proof is given in Appendix~\ref{CCAII:app:rounding}.

\begin{lemma}\label{CCAII:lem:bounded-type-counts}
Suppose admissibility holds, and let $I\subseteq[k]$ satisfy $r_i\le A$ and
$s_i-r_i\le D$ for $i\in I$.  Then \eqref{CCAII:eq:count-system} has a solution with
$x_{it}\le C_t$ for $i\in I$ and $t\in[h]$.
\end{lemma}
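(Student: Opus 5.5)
The idea is to build the mixed part of $X$ from the $1$-factorization of $\mathcal P$ in Corollary~\ref{CCAII:cor:mixed-row-factorization}. Then complete it with the type-$h$ column \eqref{CCAII:eq:complete-new-column}, and check the bound $x_{ih}\le C_h$ by a linear estimate. If $(h,m,n)=(4,5,15)$ I would treat that single case separately by an explicit check. Otherwise, let $P_1,\dots,P_D$ be the $1$-factors of $\mathcal P$ provided by the corollary, each with at most $L=\lfloor n/h\rfloor$ edges. Each size-$u$ edge of $\mathcal P$ is one underlying mixed edge with $u$ old vertices. Hence $\lambda\mathcal P$ is exactly the multiset of mixed edges of $\lambda K_n^h$ with the new vertices deleted.

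\textbf{Step 1 (distributing factors).} Write the cyclic list $P_1,\dots,P_D,P_1,\dots,P_D,\dots$ of length $\lambda D$. Since $\sum_{i}\delta_i=\lambda D$, we can cut it into consecutive blocks of lengths $\delta_1,\dots,\delta_k$ and give block $i$ to color $i$. Let $x_{i,h-u}$ be the number of size-$u$ edges in color $i$'s factors. Each $1$-factor covers every old vertex once, so $\sum_u u\,x_{i,h-u}=m\delta_i$. The column sums are $\lambda M_u$ because every factor of $\lambda\mathcal P$ is used exactly once. For $i\in I$ we have $\delta_i\le D$, so the block consists of distinct factors $P_j$. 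These factors share no edge copy, so each underlying mixed edge occurs at most once in color $i$, which gives $x_{i,h-u}\le M_u=C_{h-u}$. Moreover $W_i\le L\delta_i\le n\delta_i/h\le(n\delta_i+(n-m)r_i)/h=e_i$. Therefore \eqref{CCAII:eq:mixed-targets} holds, and defining $x_{ih}=e_i-W_i$ yields an element of $\mathscr X$ by the discussion following \eqref{CCAII:eq:complete-new-column}.

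\textbf{Step 2 (the type-$h$ bound).} This is the step I expect to be the real content. Every mixed edge has at most $h-1$ old vertices, so $W_i\ge m\delta_i/(h-1)$. Hence
\[
 x_{ih}\le \frac{ns_i-mr_i}{h}-\frac{m\delta_i}{h-1}
 =\frac{(n-m)r_i}{h}+\Bigl(\frac nh-\frac m{h-1}\Bigr)\delta_i .
\]
Both coefficients are nonnegative since $n\ge(h-1)m$. So for $i\in I$ the right side is at most its value at $r_i=A$ and $\delta_i=D$, namely
\[
 \frac{nB-mA}{h}-\frac{mD}{h-1}=\sum_{t\in[h]}C_t-\frac{mD}{h-1}.
\]
Here I use that $(nB-mA)/h$ counts all added edges of $K_n^h$. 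Finally, \eqref{CCAII:eq:mixed-incidences} gives $\sum_u uM_u=mD$ with $u\le h-1$, so $\sum_{t<h}C_t=\sum_u M_u\ge mD/(h-1)$. The bound is therefore at most $C_h$, which completes the argument for $t=h$.

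The main obstacle is the excluded triple $(4,5,15)$, where the corollary is unavailable. There I would try to produce a suitable $1$-factorization of $\mathcal P$ directly, or choose explicit integer rows satisfying the same inequalities. The slack in Step 2 suggests it still works, since there $L=3$ and $e_i\ge 15\delta_i/4$.
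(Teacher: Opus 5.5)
Your Step~1 is sound; it is the paper's distribution of factors into consecutive blocks. Step~2, however, does not prove $x_{ih}\le C_h$, because its final inequality runs the wrong way. From $\sum_u uM_u=mD$ with $u\le h-1$ you correctly get $\sum_{t<h}C_t\ge mD/(h-1)$. But this yields
\[
 \sum\nolimits_{t\in[h]}C_t-\frac{mD}{h-1}
 =C_h+\sum\nolimits_{u\in[h-1]}\frac{(h-1-u)M_u}{h-1}>C_h,
\]
so your upper bound exceeds $C_h$. For $(h,m,n)=(4,4,12)$ it is $826/3$, while $C_4=70$. The estimate $W_i\ge m\delta_i/(h-1)$ is too weak. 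When $r_i=A$ and $\delta_i=D$, color $i$ receives every mixed edge exactly once, so $W_i=T:=\sum_{t<h}C_t$ and $x_{ih}=C_h$ exactly. The lower bound on $W_i$ must therefore be sharp in this case, and a bound depending only on edge sizes is not.

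The missing idea is to use the edge bound on the factors a second time, for the factors that color $i$ does \emph{not} receive. For $i\in I$ the block consists of $\delta_i$ distinct factors. So the other $D-\delta_i$ factors in one pass of the list contain all remaining mixed edges, and they have at most $L(D-\delta_i)\le n(D-\delta_i)/h$ edges. Hence $W_i\ge T-n(D-\delta_i)/h$. Using $h(T+C_h)=nB-mA$ and $he_i=n\delta_i+(n-m)r_i$, this gives $x_{ih}\le C_h-(n-m)(A-r_i)/h\le C_h$, and this is where $r_i\le A$ genuinely enters.

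The excluded triple $(4,5,15)$ also remains open in your proposal, and your first fallback cannot work there. $\mathcal P$ has $1150$ edges and degree $360$, so no $1$-factorization with at most $L=3$ edges per factor exists. Instead, note that admissibility forces $4\mid r_i$ and $4\mid s_i$. Take $90$ $4$-factors of $\mathcal P$ with at most $15$ edges each, obtained from Lemma~\ref{CCAII:lem:direct-mixed-allocation} with $\delta_j=4$, $p_j=5$ and then Theorem~\ref{CCAII:thm:balanced-R}. Use blocks of length $\delta_i/4$; the same complementary count then bounds $x_{ih}$.
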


\begin{proof}
Use $\delta_i,e_i$ as above, and put
$T=\sum\nolimits_{u\in[h-1]}M_u=\sum\nolimits_{t\in[h-1]}C_t$.
We first factor $\mathcal P$ in a form suited to the required bounds.
\begin{enumerate}[label=\textup{(\roman*)},leftmargin=1.7em,labelsep=.35em,itemsep=3pt,topsep=2pt]
\item If $(h,m,n)\ne(4,5,15)$, use the $D$ $1$-factors of
Corollary~\ref{CCAII:cor:mixed-row-factorization} and put $\nu=1$.

\item If $(h,m,n)=(4,5,15)$, apply
Lemma~\ref{CCAII:lem:direct-mixed-allocation} with $\lambda=1$, $90$ colors,
$\delta_j=4$, and $p_j=5$. Here $N=550\ge90\cdot5$, so the counts have
total incidence $20$ with the old vertices and at most $15$ edges per
color. Theorem~\ref{CCAII:thm:balanced-R} gives $90$ $4$-factors
of $\mathcal P$; put $\nu=4$.
Since $4\mid5r_i$ and $4\mid15s_i$, we have $4\mid r_i,s_i$ and hence
$\nu\mid\delta_i$.
\end{enumerate}

In either case $\mathcal P$ has $Q=D/\nu$ factors of degree $\nu$, each
with at most $n\nu/h$ edges.  List these factors in a fixed order, repeat
this list $\lambda$ times, and divide it into consecutive blocks of lengths
$\delta_i/\nu$, giving block $i$ to color $i$.  These lengths are integers
and sum to $\lambda Q$.  For $i\in I$, the block has length at most $Q$, so
no factor occurs twice in it.
For $t<h$, let $x_{it}$ count the assigned edges containing $h-t$
old vertices in color $i$. These entries sum to $\lambda C_t$ in each
column, have total incidence $m\delta_i$ with the old vertices in color
$i$, and satisfy
$W_i\le n\delta_i/h\le e_i$.
Thus \eqref{CCAII:eq:complete-new-column} completes a solution of
\eqref{CCAII:eq:count-system}.

For $i\in I$, color $i$ receives at most one copy of each factor,
so $x_{it}\le C_t$ for $t<h$. The remaining factors have total degree
$D-\delta_i$ and at most
$n(D-\delta_i)/h$ edges, so
\[
 W_i\ge T-\frac{n(D-\delta_i)}h.
\]
Since $h(T+C_h)=nB-mA$ and
$he_i=n\delta_i+(n-m)r_i$, we obtain
\[
 x_{ih}=e_i-W_i
 \le C_h-\frac{(n-m)(A-r_i)}h\le C_h.
\]
This gives the required bound in every type and every specified color.
\end{proof}

\subsection{Equimultiple and simple embeddings}

\begin{corollary}\label{CCAII:cor:large-order-equimultiple}
Let $I\subseteq[k]$.  An embedding in which $F_i$ is equimultiple for
$i\in I$ exists if and only if admissibility holds and
\[
 \mult_{G_i}(e)\approx s_i/B
 \quad\text{for }i\in I,\ e\in E(K_m^h),\qquad
 (s_i-r_i)/D\approx s_i/B\quad\text{for }i\in I.
\]
An embedding in which $F_i$ is simple for $i\in I$ exists if and only if
admissibility holds and
\[
 G_i\text{ is simple},\qquad s_i-r_i\le D
 \quad\text{for }i\in I.
\]
In either case, the embedding can be chosen with
$c(F_i)=\kappa_i$ for $i\in[k]$.
\end{corollary}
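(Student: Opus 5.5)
Both parts reduce to Corollary~\ref{CCAII:cor:equimultiple-new-edge-count} and Theorem~\ref{CCAII:thm:new-edge-count-criterion}, with Lemma~\ref{CCAII:lem:bounded-type-counts} supplying the counts. Throughout, $\tau=h$, and every embedding can be made to have $c(F_i)=\kappa_i$ by Lemma~\ref{CCAII:lem:component-optimal}, which preserves equimultiplicity and simplicity.

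\emph{Necessity.} For equimultiplicity, Corollary~\ref{CCAII:cor:equimultiple-new-edge-count} already forces $\mult_{G_i}(e)\approx s_i/B$ and $x_{it}/C_t\approx s_i/B$ for all $t$. Put $a=\lfloor s_i/B\rfloor$ and $b=\lceil s_i/B\rceil$. By \eqref{CCAII:eq:mixed-incidences}, the old-vertex incidences of mixed edges in color $i$ total $\sum_{t<h}(h-t)x_{it}=m\delta_i$. Each mixed type contributes incidence between $a$ and $b$ times its full count, and the full total is $\sum_u uM_u=mD$. Hence $aD\le\delta_i\le bD$, which is $(s_i-r_i)/D\approx s_i/B$.

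For simplicity, $G_i$ is the restriction of $F_i$, so it must be simple. Also each old vertex lies in only $D$ added underlying edges, each used at most once in color $i$, so $s_i-r_i\le D$.

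\emph{Sufficiency for simple embeddings.} Simplicity of $G_i$ gives $r_i\le A$. Lemma~\ref{CCAII:lem:bounded-type-counts} then provides $X\in\mathscr X$ with $x_{it}\le C_t$ for $i\in I$. Theorem~\ref{CCAII:thm:new-edge-count-criterion} realizes $X$ with all multiplicities in color $i$ at most $\max\{g_i^+,\lceil x_{it}/C_t\rceil\}\le1$, and with the minimum component counts.

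\emph{Sufficiency for equimultiple embeddings.} Here I would peel off complete layers. For $i\in I$ let $a_i=\lfloor s_i/B\rfloor$. The conditions give $\mult_{G_i}\ge a_i$ and $\delta_i\ge a_iD$, so $r_i\ge a_iA$. Subtract $a_i$ copies of $K_m^h$ from $G_i$ and $a_i$ copies of $K_n^h$ from the target. This gives degrees $r_i'=r_i-a_iA$, $s_i'=s_i-a_iB$ and multiplicity $\lambda'=\lambda-\sum_{i\in I}a_i$.

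Admissibility survives the subtraction, since $h\mid nB$ and $h\mid mA$, and the degree sum is reduced by exactly $B\sum a_i$. The reduced source factor $G_i'$ has multiplicities in $\{0,1\}$, so it is simple. The condition $(s_i-r_i)/D\approx s_i/B$ gives $s_i'-r_i'=\delta_i-a_iD\le D$. If $s_i/B$ is an integer, then $\delta_i=a_iD$ and the reduced color is empty, which needs separate care. Colors with $s_i'=0$ are simply dropped; the standing assumption $q\le k$ is harmless for them.

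Now apply the simple case above to the reduced problem, with $\lambda'$ and the colors $I$. This gives a reduced embedding that is simple in the colors of $I$. Adding back the $a_i$ complete layers makes every multiplicity in color $i$ lie in $\{a_i,a_i+1\}$, so $F_i$ is equimultiple.

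\emph{Main obstacle.} The delicate step is the bookkeeping in this reduction. I must check that $\lambda'\ge1$ or handle $\lambda'=0$ directly, and that the reduced instance still satisfies the hypotheses with $n\ge(h-1)m$ unchanged. It is also possible that $\lambda'$ is too small for the reduced degrees. If the reduction misbehaves, the fallback is to use Lemma~\ref{CCAII:lem:bounded-type-counts} directly: apply it to the residual degrees $\delta_i-a_iD$ and add $a_iC_t$ to each $x_{it}$ afterwards.
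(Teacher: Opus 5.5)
Your proposal is correct and follows the paper's proof: necessity comes from the fixed old multiplicities and the added-edge incidences at old vertices (your type-averaged version is equivalent to the paper's per-vertex count), simplicity comes from Lemma~\ref{CCAII:lem:bounded-type-counts} and Theorem~\ref{CCAII:thm:new-edge-count-criterion}, and equimultiplicity comes from peeling off $a_i=\lfloor s_i/B\rfloor$ complete layers; the one difference is that you recover $c(F_i)=\kappa_i$ by a final application of Lemma~\ref{CCAII:lem:component-optimal}, which preserves the bounds $[a_i,a_i+1]$, rather than tracking components through the reduction. The bookkeeping you flag is routine and is done explicitly in the paper: $\lambda'\ge0$ because $a_iB\le s_i$; if $\lambda'=0$, then every $s_i'=0$, which forces $I=[k]$ and $F_i=a_iK_n^h$; the worry that $\lambda'$ is too small is moot, since the reduced degrees are admissible with sum exactly $\lambda'B$; and colors with $r_i'=0<s_i'$ should be relabeled as additional target colors before you invoke the simple case.
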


\begin{proof}
For equimultiplicity, necessity follows from the prescribed multiplicities
on old edges and the $D$ underlying added edges through each old vertex.
For sufficiency, put $a_i=\lfloor s_i/B\rfloor$ for $i\in I$ and
$a_i=0$ otherwise.  Remove $a_i$ copies of $K_m^h$ from $G_i$, and set
\[
 r_i'=r_i-a_iA,\qquad s_i'=s_i-a_iB,\qquad
 \mu=\lambda-\sum\nolimits_{i\in[k]}a_i.
\]
The remaining old edges form a coloring of $\mu K_m^h$.
For $i\in I$, the remaining factor is simple, and the two displayed
approximation conditions give
\[
 0\le r_i'\le A,\qquad 0\le s_i'<B,\qquad 0\le s_i'-r_i'\le D.
\]
Since $a_iB\le s_i$ for every $i$, we have
$\sum\nolimits_{i\in[k]}a_i\le\lambda$, so $\mu\ge0$.  Moreover,
\[
 nB=h\binom nh,\qquad mA=h\binom mh,
\]
so removing complete copies preserves the required divisibility conditions.
Thus the remaining degrees are admissible and satisfy $r_i'\le s_i'$.
We distinguish two cases.
\begin{enumerate}[label=\textup{(\roman*)},leftmargin=1.7em,labelsep=.35em,itemsep=3pt,topsep=2pt]
\item If $\mu=0$, then
$\sum\nolimits_{i\in[k]}(s_i-a_iB)=\mu B=0$ and every $s_i-a_iB\ge0$, so
$s_i=a_iB$ for every $i$.  Since $s_i>0$, this forces $a_i>0$ for every
$i$, so every color lies in $I$; the second approximation condition then
gives $s_i-r_i=a_iD>0$.  Thus $\kappa_i=1$ for every $i$, and assigning
$a_i$ copies of $K_n^h$ to color $i$ gives the required embedding.

\item Suppose $\mu>0$.  Temporarily omit any color with $s_i'=0$.
Among the colors that remain, regard those with $r_i'>0$ as source colors
and those with $r_i'=0<s_i'$ as additional target colors, regardless of
their status before the complete copies were removed.  Let $I'$ consist of
the remaining colors from $I$.  Relabel these colors and $I'$ if necessary,
and apply Lemma~\ref{CCAII:lem:bounded-type-counts} with $\mu$ in place of
$\lambda$ and $I'$ in place of $I$.  The resulting counts are at most
$C_t$ for $i\in I'$.  In these colors the remaining old factors are simple,
and every type has average multiplicity at most one.  Hence the construction in
Theorem~\ref{CCAII:thm:new-edge-count-criterion} uses only $0$--$1$
multiplicities and gives simple, hence equimultiple, target factors while
minimizing the number of components in every remaining color.

Restore the original color labels, reinstate any omitted colors, and add back
the $a_i$ copies of $K_n^h$.  This preserves equimultiplicity.  If $a_i>0$,
the condition $(s_i-r_i)/D\approx s_i/B$ gives
$s_i-r_i\ge a_iD>0$.  Thus $\kappa_i=1$, and the added copies make color
$i$ connected.  Colors with $a_i=0$ retain their minimum component counts.
Hence the resulting embedding satisfies $c(F_i)=\kappa_i$ for all colors.
\end{enumerate}

For simplicity, necessity follows by restriction to the old vertices and by
counting the $D$ possible added edges through an old vertex.  Conversely,
assume admissibility, simplicity of $G_i$, and $s_i-r_i\le D$ for $i\in I$.
Then $r_i\le A$ for $i\in I$, so
Lemma~\ref{CCAII:lem:bounded-type-counts} gives $X\in\mathscr X$ with
$x_{it}\le C_t$ in these colors.  The construction in
Theorem~\ref{CCAII:thm:new-edge-count-criterion} balances each type, so every
added-edge multiplicity is at most one; together with simplicity of $G_i$,
this gives simple target factors for $i\in I$, while
$c(F_i)=\kappa_i$ for $i\in[k]$.
\end{proof}

Let $I\subseteq[k]$, and put $a_i=\lfloor s_i/B\rfloor$ and
$b_i=\lceil s_i/B\rceil$.  For $i\in I$, the degree condition for equimultiplicity is equivalently
$a_iD\le s_i-r_i\le b_iD$.  For an additional target color
$i\in I\setminus[q]$, the two equimultiplicity conditions reduce to
$s_i\le D=B-A$, and the target factor is simple.

Taking $I=[k]$, all target factors can be chosen connected and simple if and
only if admissibility holds and
\[
 \begin{aligned}
 G_i&\text{ is simple},\qquad r_i<s_i\le r_i+D
 &&\text{for }i\in[q],\\
 2&\le s_i\le D
 &&\text{for }i\in[k]\setminus[q].
 \end{aligned}
\]
This combines the simplicity criterion with the connectedness condition
following Theorem~\ref{CCAII:thm:large-order-optimal}.

\section{Target degrees and iterated embeddings}\label{CCAII:sec:further-consequences}

\subsection{Target degrees}

We determine the possible numbers of nonempty target colors under an upper
degree bound, and the minimum possible maximum target degree.  The same
calculation applies when all target factors are required to be connected.

\begin{theorem}\label{CCAII:thm:embedding-color-spectrum}
Suppose either $h\in\{2,3\}$ and $n\ge2m$, or $h\ge4$ and
$n\ge(h-1)m$, and let the prescribed factorization $\cG$ have $q$ nonempty
source colors of degrees $r_1,\dots,r_q$.  Put
$B=\binom{n-1}{h-1}$, $S=\lambda B$, and $g=h/\gcd(h,n)$.
Use one of the following choices of lower bounds:
\[
 \begin{array}{c|cc}
  \text{target factors} & \ell_i\ (i\in[q]) & a\\[2pt]\hline
  \text{no connectivity requirement}
   & g\lceil r_i/g\rceil & g\\[2pt]
  \text{all connected}
   & g\lceil(r_i+1)/g\rceil & \max\{2,g\}.
 \end{array}
\]
Here $\ell_i$ is the lower bound for source color $i$, and $a$
is the lower bound for an additional target color.
Given $b\in\mathbb N$, put $R=g\lfloor b/g\rfloor$.
For either choice, the following hold.

\begin{enumerate}
\item[\textup{(i)}] For $k\ge q$, an embedding with $k$ nonempty
target colors, each of degree at most $b$, exists if and only if
\[
 \ell_i\le R\quad\text{for }i\in[q],
 \qquad
 \sum\nolimits_{i\in[q]}\ell_i+(k-q)a\le S\le kR.
\]

\item[\textup{(ii)}] If $\ell_i\le R$ for $i\in[q]$, the possible numbers
of nonempty target colors under the bound $b$ are exactly the integers
\[
 \max\left\{q,\left\lceil\frac SR\right\rceil\right\}
 \le k\le
 q+\left\lfloor\frac{S-\sum\nolimits_{i\in[q]}\ell_i}{a}\right\rfloor.
\]
If some $\ell_i>R$, or the interval is empty, there is no such embedding.

\item[\textup{(iii)}] Independently of $b$, for fixed $k\ge q$, an embedding
exists if and only if
$\sum\nolimits_{i\in[q]}\ell_i+(k-q)a\le S$.  Whenever this holds,
\[
 \min\nolimits_{\vb s}\max\nolimits_{i\in[k]}s_i
 =
 \max\left\{\max\nolimits_{i\in[q]}\ell_i,\
       g\left\lceil\frac{S}{kg}\right\rceil\right\},
\]
where the minimum is over target degree vectors permitting an embedding
with the chosen connectivity requirement.
\end{enumerate}
\end{theorem}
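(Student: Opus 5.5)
In the stated range, Corollary~\ref{CCAII:cor:h23-large-order} and Theorem~\ref{CCAII:thm:large-order-optimal} make admissibility equivalent to existence. By Lemma~\ref{CCAII:lem:component-optimal}, all target factors can moreover be chosen connected exactly when $\kappa_i=1$ for every $i$, that is, when $s_i>r_i$ for $i\in[q]$ and $s_i\ge2$ for $i\in[k]\setminus[q]$. So the plan is to reduce everything to a purely arithmetic question about degree vectors. We need integers $s_1,\dots,s_k$ with $h\mid ns_i$, with $s_i\ge r_i$ (or $s_i\ge r_i+1$) on source colors, with $s_i\ge1$ (or $s_i\ge2$) on additional colors, and with $\sum s_i=S$; for (i)--(ii) we also need $s_i\le b$.

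First, the divisibility condition $h\mid ns_i$ is equivalent to $g\mid s_i$, where $g=h/\gcd(h,n)$. The smallest admissible value of $s_i$ in a source color is therefore $g\lceil r_i/g\rceil$, or $g\lceil (r_i+1)/g\rceil$ under connectivity. In an additional color it is $g$, or the least multiple of $g$ that is at least $2$; the latter is $\max\{2,g\}$ when $g\ge2$, and $2$ when $g=1$. This gives the values $\ell_i$ and $a$, and the upper bound $b$ becomes $R$, the largest multiple of $g$ not exceeding $b$. Next, I will observe that $g\mid S$, which is needed for the sum to be achievable at all. This follows because $nS=\lambda nB=\lambda h\binom nh$, so $h\mid nS$. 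Likewise every $\ell_i$, $a$ and $R$ is a multiple of $g$.

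For (i), necessity is immediate: each $s_i$ lies between its lower bound and $R$, so summing gives $\sum\ell_i+(k-q)a\le S\le kR$. For sufficiency I divide everything by $g$. Then I need integers $s_i/g$ lying in intervals $[\ell_i/g,R/g]$ and $[a/g,R/g]$ (all endpoints integers, intervals nonempty since $\ell_i\le R$ and $a\le R$) with sum $S/g$. An integer point with prescribed sum exists exactly when that sum lies between the sums of the endpoints, and I would construct it by starting at the lower endpoints and raising coordinates one at a time. A subtlety is that $a\le R$ is needed when $k>q$. This is implied by $(k-q)a\le S-\sum\ell_i\le kR-\sum\ell_i$ only in an averaged sense, so I will check it separately. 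If $a>R$ then every $s_i\le R<a$; under connectivity with $g=1$ this means $R=1$, which forces all $s_i\le1$ while some additional color needs degree $2$. I expect this boundary case to be the main nuisance. It may require noting that $S\le kR$ together with $\sum\ell_i+(k-q)a\le S$ already forces $a\le R$, using $\ell_i\le R$: indeed $kR\ge S\ge\sum\ell_i+(k-q)a$, and if $a>R$ with $k>q$, the bound $\ell_i\ge r_i\ge1$ does not immediately give a contradiction, so I will need to treat this case directly.

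Part (ii) is (i) solved for $k$: the upper constraint gives $k\ge\lceil S/R\rceil$, and the lower constraint gives $k\le q+\lfloor(S-\sum\ell_i)/a\rfloor$. Every $k$ in this integer range satisfies both. For (iii), I take $b$ large, so the only condition is $\sum\ell_i+(k-q)a\le S$. The minimax value is the least multiple $R$ of $g$ with $R\ge\max_i\ell_i$, $R\ge a$, and $kR\ge S$, that is, $R\ge g\lceil S/(kg)\rceil$. I still need to explain why $a$ does not appear in the formula: when $k>q$, the inequality $(k-q)a\le S$ combined with the other constraints should give $a\le g\lceil S/(kg)\rceil$ whenever $a$ would otherwise be the binding term. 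I will verify this, and if it fails I will handle it as the same boundary case as in (i).
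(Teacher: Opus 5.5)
Your reduction matches the paper's. You use $h\mid ns_i\iff g\mid s_i$ and $g\mid S$ via $nB=h\binom nh$. The lower bounds come from $s_i\ge r_i$ and $s_i\ge1$, or from their connected versions via Lemma~\ref{CCAII:lem:component-optimal}. You then raise degrees in steps of $g$, solve for $k$ in (ii), and take $b=M$ in (iii).

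There is one gap: you leave the inequality $a\le R$ unproved and suspect it needs separate treatment. In fact $r_i\ge1$ does give an immediate contradiction, because $\ell_i\ge a$ for every $i\in[q]$. Without connectivity, $\ell_i=g\lceil r_i/g\rceil$ is a positive multiple of $g$, so $\ell_i\ge g=a$. With connectivity, $\ell_i$ is a positive multiple of $g$ with $\ell_i\ge r_i+1\ge2$, so $\ell_i\ge\max\{2,g\}=a$.

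Since $q\ge1$, the hypothesis $\ell_i\le R$ in (i) already forces $a\le R$. Hence every interval $[a/g,R/g]$ is nonempty, and your argument of raising coordinates from the lower endpoints goes through. The boundary case you describe (connectivity, $g=1$, $R=1$) cannot satisfy the hypotheses, since there $\ell_i\ge2>R$.

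The same inequality $a\le\max_i\ell_i$ explains why $a$ is absent from the formula in (iii). The displayed maximum $M$ is automatically at least $a$, so (i) with $b=M$ applies. It also gives $R\ge\ell_i\ge g>0$ in (ii), which you need for $\lceil S/R\rceil$ to make sense.

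With this observation added, your proof is complete and coincides with the paper's.
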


\begin{proof}
The condition $h\mid ns_i$ is equivalent to $g\mid s_i$.
Since $nB=h\binom nh$, we also have $g\mid B$, and hence $g\mid S$.
Thus admissible positive degrees are positive multiples of $g$, and
$R$ is the largest nonnegative multiple of $g$ at most $b$.
The ordinary lower bounds follow from $s_i\ge r_i$ for source colors
and $s_i\ge1$ for additional target colors.
Whenever an embedding exists, Lemma~\ref{CCAII:lem:component-optimal}
allows all target factors to be connected exactly when $s_i>r_i$ for
$i\in[q]$ and $s_i\ge2$ for $i\in[k]\setminus[q]$, giving the second
choice of lower bounds.  In both cases, $\ell_i$ and $a$ are multiples of
$g$ and $\ell_i\ge a$ for $i\in[q]$, since $r_i\ge1$.

For \textup{(i)}, the lower bounds and the degree sum give necessity.
Conversely, start the source colors at $\ell_i$ and the additional target
colors at $a$.  Their sum is at most $S$, and the total available increase
before any degree reaches $R$ is at least the remaining amount.
All amounts are multiples of $g$, so distribute the remainder in steps of
$g$.  The resulting positive degrees sum to $S$ and satisfy admissibility.
Theorem~\ref{CCAII:thm:main-existence} gives an embedding; in the connected
case, Lemma~\ref{CCAII:lem:component-optimal} gives connected target factors.

Solving the sum inequalities in \textup{(i)} for $k$ gives \textup{(ii)}.
The condition $\ell_i\le R$ implies $R>0$, since $q\ge1$ and $\ell_i\ge g$.
If $b<g$, this condition fails, so there is no embedding.

For \textup{(iii)}, every common upper bound is at least each $\ell_i$
and at least the least multiple of $g$ whose $k$ copies sum to at least $S$.
Let $M$ be the displayed maximum.  Then $M$ is a multiple of $g$,
$M\ge\ell_i\ge a$, and $S\le kM$.  If the sum of the lower bounds is at most
$S$, part~\textup{(i)} with $b=M$ gives an embedding attaining this bound.
\end{proof}

For $h\ge4$ and $n\ge(h-1)m$, the criteria for equimultiple target factors
are given in Corollary~\ref{CCAII:cor:large-order-equimultiple}.
These also depend on the prescribed old multiplicities, rather than only
on the source and target degrees.

\subsection{Iterated embeddings}\label{CCAII:sec:iterated-embeddings}

We now compare embeddings through intermediate orders with direct
embeddings.  The component comparison applies to any existing chain of
increasing orders.

\begin{proposition}\label{CCAII:prop:iterated-components}
Let $t\in\mathbb N$, and let $n_0,\dots,n_t\in\mathbb N$ with
$h\le n_0<n_1<\cdots<n_t$.  Let
$\cG_0\subseteq\cG_1\subseteq\cdots\subseteq\cG_t$ be a chain in which
$\cG_{j-1}$ is embedded in $\cG_j$, a factorization of
$\lambda K_{n_j}^h$, for $j\in[t]$.  Fix a color $i$ that is nonempty in
$\cG_0$, and write $G_{ji}$ for the color-$i$ factor of $\cG_j$,
$d_j$ for its degree, and $c_j=c(G_{ji})$.
Suppose that, at each stage, among all embeddings of $\cG_{j-1}$ in a
factorization of $\lambda K_{n_j}^h$ with target color-$i$ degree $d_j$,
$\cG_j$ has the minimum possible number $c_j$ of components in color $i$.

\textup{(i)} For $j\in[t]$,
\[
 c_j=
 \begin{cases}
  1, & d_j>d_{j-1},\\
  c_{j-1}+1, & d_j=d_{j-1}\ge2,\\
  c_{j-1}+(n_j-n_{j-1})/h, & d_j=d_{j-1}=1.
 \end{cases}
\]

\textup{(ii)} If $\ell$ is the last index for which $d_\ell>d_{\ell-1}$, then
\[
 c_t=1+t-\ell.
\]
If the degree never increases, then
\[
 c_t=
 \begin{cases}
  c_0+t, & d_0\ge2,\\
  c_0+(n_t-n_0)/h, & d_0=1.
 \end{cases}
\]

\textup{(iii)} Compared in color $i$ with a direct embedding of $\cG_0$ having
the same final target degree and minimum component count in that color, the
excess number of components is $t-\ell$ if the degree increases at least once,
$t-1$ if the degree is constant and at least two, and $0$ if the degree is
constantly one.
\end{proposition}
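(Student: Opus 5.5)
Part (i) will be obtained by applying Lemma~\ref{CCAII:lem:component-optimal} at each stage, with $\cG_{j-1}$ as the source factorization and $\cG_j$ as an embedding of it. Color $i$ is nonempty in $\cG_0$, so it is a source color at every stage, with source degree $d_{j-1}\ge1$ and target degree $d_j\ge d_{j-1}$. Lemma~\ref{CCAII:lem:component-optimal} says that $\kappa_i$ is the minimum component count among embeddings with the given target degrees. The minimality hypothesis in the proposition concerns color $i$ alone, but $\kappa_i$ for a source color depends only on $d_{j-1}$, $d_j$, $c(G_{j-1,i})=c_{j-1}$ and $n_j-n_{j-1}$. So the minimum over embeddings with color-$i$ target degree $d_j$ is still $\kappa_i$: any such embedding has the admissible target vector of some existing embedding, and the lower-bound argument in that lemma is color by color. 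Reading off the three source-color cases of $\kappa_i$ gives exactly the displayed formula for $c_j$. One point needs checking here. In the case $d_j=d_{j-1}=1$, the lemma's quotient $(n_j-n_{j-1})/h$ is an integer because $G_{j-1,i}$ is a $1$-factor on $n_{j-1}$ vertices and $G_{ji}$ is one on $n_j$ vertices.

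Part (ii) follows from (i) by a short induction. At $j=\ell$ we have $c_\ell=1$. For each $j>\ell$ the degree stays constant, so $d_j=d_\ell$. This common degree is at least $2$: $d_\ell>d_{\ell-1}\ge1$. Hence each later step adds exactly one component, and $c_t=1+(t-\ell)$. If the degree never increases, then $d_j=d_0$ for all $j$. Summing the increments gives $c_0+t$ when $d_0\ge2$. When $d_0=1$, the increments telescope to $c_0+\sum_j(n_j-n_{j-1})/h=c_0+(n_t-n_0)/h$.

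For part (iii), I compare with the direct minimum, again given by $\kappa_i$ from Lemma~\ref{CCAII:lem:component-optimal}, now with source degree $d_0$ and target degree $d_t$. If the degree increased at least once, then $d_t>d_0$, the direct minimum is $1$, and the excess is $t-\ell$. If the degree is constant and at least two, the direct minimum is $c_0+1$, so the excess is $(c_0+t)-(c_0+1)=t-1$. If the degree is constantly one, the direct minimum is $c_0+(n_t-n_0)/h$, so the excess is $0$. One caveat concerns existence. A direct embedding with these degrees should exist; for example, the composite chain is itself an embedding of $\cG_0$ in a factorization of $\lambda K_{n_t}^h$ with color-$i$ degree $d_t$. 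The comparison therefore uses the minimum over a nonempty set.

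The only delicate step is the first one: justifying that the per-stage minimum, taken over all embeddings with color-$i$ target degree $d_j$ and arbitrary other degrees, coincides with $\kappa_i$. I would handle this by noting that the lower bounds in Lemma~\ref{CCAII:lem:component-optimal} are established color by color, using only that color's degrees and old components. The bounds used are one absolute component, separation of old and new parts when $s_i=r_i$, and one component per edge in degree one. Each is attained by the detachment applied to the given $\cG_j$, whose other colors are irrelevant.
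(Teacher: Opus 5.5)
Your proposal is correct and follows essentially the same route as the paper: apply Lemma~\ref{CCAII:lem:component-optimal} at each stage to read off $\kappa_i$ for (i), iterate the recurrence for (ii), and for (iii) compare with $\kappa_i$ for the direct problem, using the composite chain to show such an embedding exists. Your additional remark makes explicit a point the paper leaves implicit. The per-stage minimum is taken over embeddings constrained only in color $i$, and it still equals $\kappa_i$ because the lemma's lower bounds use only color-$i$ data.
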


\begin{proof}
Restriction preserves the old edges, so $d_0\le\cdots\le d_t$.
Part~\textup{(i)} follows by applying
Lemma~\ref{CCAII:lem:component-optimal} at stage $j$.  If
a degree increases, every old vertex receives positive new color-$i$
incidence; after the new vertices are amalgamated, every previous component
meets the amalgamated vertex $\alpha$.  Lemma~\ref{CCAII:lem:component-optimal}
therefore gives $c_j=1$.  If the degree is unchanged and at least two, one component is
added; if it is unchanged and equal to one, the increment is
$(n_j-n_{j-1})/h$.
Iterating the recurrence gives part~\textup{(ii)}.  After the last degree
increase, the degree is at least two, so every remaining stage adds one
component.  If the degree never increases and equals one, the increments
telescope.  In this case $h\mid n_j$ for every $j$, since $G_{ji}$ is a
$1$-factor.

For part~\textup{(iii)}, composition of the stagewise embeddings gives a
direct embedding of $\cG_0$ at order $n_t$ with the same final target degree.
Applying Lemma~\ref{CCAII:lem:component-optimal} once to such a direct
embedding gives minimum component count $1$ when $d_t>d_0$, $c_0+1$ when
$d_t=d_0\ge2$, and $c_0+(n_t-n_0)/h$ when $d_t=d_0=1$.
Subtracting the values in part~\textup{(ii)} gives the stated excesses.
\end{proof}

For multiplicities, suppose $M\in\mathbb N\cup\{0\}$ and
$b_1,\dots,b_t\in\mathbb N$.  At stage $j$, split every current part
fairly into $b_j$ parts.  Each final part $x$ then satisfies
$x\approx M/(b_1\cdots b_t)$.
Indeed, the nested floor--ceiling identities in
Section~\ref{CCAII:sec:preliminaries} show inductively that the bounds
after stage $j$ are $\lfloor M/(b_1\cdots b_j)\rfloor$ and
$\lceil M/(b_1\cdots b_j)\rceil$.
The same number of parts must be used for every current part at each stage:
splitting $8$ as $4,4$ and then only one of these parts as $2,2$
gives $2,2,4$, which is not a fair three-way split.

This rounding observation concerns repeated subdivision of a fixed
multiplicity.  In an embedding through intermediate orders, however, the
earlier edge multiplicities become prescribed data.  Balancing each class
of added edges at every stage therefore need not match the spread of a
direct embedding.
For example, take $h=3$, $\lambda=3$, and source colors $K_4^3$ and
$2K_4^3$, and consider
\[
 4\longrightarrow7\longrightarrow10,
 \qquad
 (3,6)\longrightarrow(3,42)\longrightarrow(3,105).
\]
At the first stage, the first color has unchanged degree $3$, so
no mixed edge receives that color.  The unique edge on the three new
vertices therefore has multiplicity $3$ in that color.
The second color has multiplicity $3$ on every mixed edge and
multiplicity $0$ on the all-new edge.  Thus both colors have spread $3$.
A chain with the displayed degrees exists: at each stage, retain the
first color and add three copies of the triple on the three new
vertices; give all remaining edges to the second color.
At the second stage, the old multiplicities $0$ and $3$ are fixed, so both
final spreads remain $3$.

For a direct embedding from order $4$ to order $10$, label the six new
vertices $v_1,\dots,v_6$ and put one copy of
$\{v_j,v_{j+1},v_{j+2}\}$ in the first color for $j\in[6]$, with indices
read modulo $6$.  Together with the prescribed $K_4^3$, these give a
simple $3$-factor: each new vertex lies in exactly three of the six new
edges.  Its complement in $3K_{10}^3$ is a $105$-factor and restricts to
$2K_4^3$ on the old vertices.
The first color has multiplicities $0$ and $1$, and the second has
multiplicities $2$ and $3$.  Hence both direct spreads are $1$.

Finally, let $h\ge4$, $t\in\mathbb N$, and $h\le n_0<\cdots<n_t$.
Start with a factorization of $\lambda K_{n_0}^h$ and prescribe positive
admissible degrees at each later order.  Retain all existing color labels,
require their degrees not to decrease, and allow additional target colors.
Repeated application of Theorem~\ref{CCAII:thm:large-order-optimal}
gives a chain whenever
\[
 n_j\ge(h-1)n_{j-1}\qquad\text{for }j\in[t].
\]
Thus the stagewise theorem requires $n_t\ge(h-1)^t n_0$, whereas the
direct theorem already applies when $n_t\ge(h-1)n_0$ for the same final
target degrees.

\appendix
\section{Rounding in the factorization of mixed edges}\label{CCAII:app:rounding}

\begin{proof}[Proof of Corollary~\ref{CCAII:cor:mixed-row-factorization}]
Assume $m\ge h\ge4$ and $n\ge(h-1)m$.  Recall
\[
 \mathcal P=\bigcup\nolimits_{u\in[h-1]}
 \binom{n-m}{h-u}K_m^u,\qquad
 M_u=\binom mu\binom{n-m}{h-u},\qquad
 d_u=\binom{m-1}{u-1}\binom{n-m}{h-u}.
\]
Here $M_u$ counts mixed edges with $u$ old vertices,
$D=\sum\nolimits_{u\in[h-1]}d_u=B-A$ is the degree of $\mathcal P$, and
$N=\sum\nolimits_{u=2}^{h-1}M_u$.  Put
$L=\lfloor n/h\rfloor$ and $p=\max\{0,m-L\}$.

Apply the greedy assignment of Lemma~\ref{CCAII:lem:direct-mixed-allocation}
with $D$ colors and $\delta_i=1$.  An assigned edge with $u$ old vertices
uses one edge to supply $u$ old-vertex incidences, so it reduces the final
number of edges in its color by $u-1$.  Let $z_i$ be the accumulated
reduction in color $i$.  The final number of edges in that color is
$m-z_i$, so it is enough to prove $z_i\ge p$ for every $i\in[D]$.
The greedy rule sends an edge to the first color with $z_i<p$ whenever such
a color exists, and Lemma~\ref{CCAII:lem:direct-mixed-allocation} shows that
every assignment is legal.  Hence $N\ge pD$ is sufficient, because each of
the $N$ edges with at least two old vertices reduces the remaining
requirement by at least one.

If $p=0$, there is nothing to prove.  If $p\le m/h$, then
Lemma~\ref{CCAII:lem:incidence-reserves} gives $N\ge mD/h\ge pD$; here
$p>0$ implies $n<hm$.  Since $n\ge(h-1)m$, the only unsettled case is
$p=\lceil m/h\rceil$ with $h\nmid m$.  If $r_0$ is the nonzero remainder
of $m$ modulo $h$, this occurs exactly when
\begin{equation}\label{CCAII:eq:rounding-strip}
 (h-1)m\le n\le(h-1)m+r_0-1.
\end{equation}
We treat this short interval in four steps.

\begin{enumerate}[label=\textup{(\roman*)},leftmargin=1.7em,labelsep=.35em,itemsep=3pt,topsep=2pt]
\item Suppose $h\ge5$.  The proof of
Lemma~\ref{CCAII:lem:incidence-reserves} gives $D<2d_1+d_2$.
In \eqref{CCAII:eq:rounding-strip} we also have $d_1\le d_2$, so
$D<3d_2$ and $M_2=md_2/2>mD/6$.  Since $M_1\le2M_2$,
\[
 hN-mD=-M_1+\sum\nolimits_{u=2}^{h-1}(h-u)M_u\ge(h-4)M_2.
\]
Therefore $N/D>m(h+2)/(6h)$, which is at least
$p\le(m+h-1)/h$ whenever $m(h-4)\ge6(h-1)$.  This covers all unsettled
cases with $h\ge9$: since $h\nmid m$, we have $m\ge h+1$, and
$(h+1)(h-4)\ge6(h-1)$ for $h\ge9$.

\item Suppose $h=4$.  Put $t=d_1/d_2$ and $v=d_3/d_2$.
In \eqref{CCAII:eq:rounding-strip}, $n-m\le2m+2$, so
$t\le2m/(3(m-1))$ and $v\ge(m-2)/(2m+1)$.  The expression
$(1/2+v/3)/(1+t+v)$ decreases with $t$ and, after replacing $t$ by this
upper bound, increases with $v$.  Consequently
\[
 \frac ND\ge
 \frac{m(m-1)(8m-1)}{2(13m^2-10m+3)}
 \ge\frac{m+3}{4}\ge p\qquad\text{for }m\ge16.
\]
The middle inequality is equivalent to
$3m^3-47m^2+29m-9\ge0$.  At $m=16+a$ this polynomial is
$3a^3+97a^2+829a+711$, so it is positive for every $a\ge0$.

\item It remains to check the following ranges, always excluding
$h\mid m$, and every $n$ in \eqref{CCAII:eq:rounding-strip}:
\[
\begin{array}{c|c|c|c}
 h&\text{range of }m&\text{number of triples}&
 \text{smallest nonnegative }N-pD\\ \hline
 4&5\le m\le15&18&40\\
 5&6\le m\le23&36&275\\
 6&7\le m\le14&18&1953\\
 7&8\le m\le11&10&2803332\\
 8&9\le m\le10&3&254931570
\end{array}
\]
These $85$ triples are checked by substituting
\[
 N=\sum\nolimits_{u=2}^{h-1}\binom mu\binom{n-m}{h-u},\qquad
 D=\binom{n-1}{h-1}-\binom{m-1}{h-1},\qquad
 p=\left\lceil\frac mh\right\rceil.
\]
The inequality $N\ge pD$ fails only for
\[
 \mathcal E=\{(4,5,15),(4,6,18),(4,6,19),(4,9,27),(4,13,39),(5,6,24)\}.
\]

\item For these six triples, use the fact that an edge with at least three
old vertices can reduce the remaining requirement by more than one.  Put
\[
 R=\sum\nolimits_{i\in[D]}\max\{0,p-z_i\}
\]
just before the edges with two old vertices are processed.  Such edges then
reduce $R$ by one while $R>0$, so $R\le M_2$ is sufficient.  The relevant
values are
\[
\begin{array}{c|r|r|r}
 (h,m,n)&N-pD&R&M_2\\ \hline
 (4,5,15)&-170&520&450\\
 (4,6,18)&-110&860&990\\
 (4,6,19)&-182&1092&1170\\
 (4,9,27)&-612&5364&5508\\
 (4,13,39)&-78&17992&25350\\
 (5,6,24)&-2130&11040&12240
\end{array}
\]
For $h=4$, an edge with three old vertices contributes two units of
reduction.  Each successive color receives $\lceil p/2\rceil$ such edges
before the next color is used.  At all five listed triples,
$M_3$ is divisible by $\lceil p/2\rceil$ and
$M_3/\lceil p/2\rceil<D$, so
$R=p(D-M_3/\lceil p/2\rceil)$.  For $(5,6,24)$, $p=2$ and $D=8850$;
each of the $270$ edges with four old vertices and the $3060$ edges with
three old vertices completes the requirement in one previously unused
color, leaving $R=2(8850-270-3060)=11040$.  Thus $R\le M_2$ for the five
triples in $\mathcal E\setminus\{(4,5,15)\}$.
\end{enumerate}

For every nonexceptional triple, the assignment prescribes $m$ incidences
and at most $m-p\le L$ edges in each of the $D$ colors, with the correct
column totals.  Theorem~\ref{CCAII:thm:balanced-R} gives
a $1$-factorization of $\mathcal P$.  This proves the corollary.
\end{proof}

The excluded triple is not merely a failure of the estimate: it is a
genuine obstruction to the bounded $1$-factorization.  For $(h,m,n)=(4,5,15)$, $\mathcal P$ has $1150$ edges
and degree $360$.  A $1$-factorization has $360$ factors, so if every
factor contained at most $L=3$ edges it would contain at most $1080$ edges
in total.  The $4$-factorization used in
Lemma~\ref{CCAII:lem:bounded-type-counts} avoids this obstruction.

\end{document}